\newcommand{\F}{\mathcal{F}}
\newcommand{\R}{\mathbb{R}}
\newcommand{\I}{\mathrm{i}}
\newcommand{\bigO}{\mathcal{O}}
\newcommand{\diff}{\mathrm{d}}
\DeclareMathOperator{\sign}{sgn}
\newtheorem{theorem}{Theorem}[section]
\newtheorem{lemma}[theorem]{Lemma}
\newtheorem{proposition}[theorem]{Proposition}
\newtheorem{remark}[theorem]{Remark}
\newtheorem{corollary}[theorem]{Corollary}
\newtheorem*{main-theorem}{Main Theorem}
\newtheorem*{remark*}{Remark}
\numberwithin{equation}{section}
\begin{document}

\allowdisplaybreaks

\title[Enhanced existence time in fKdV]{Enhanced existence time of solutions to the fractional Korteweg--de~Vries equation}

\author{Mats Ehrnstr\"om}
\author{Yuexun Wang}

\address{Department of Mathematical Sciences, Norwegian University of Science and Technology, 7491 Trondheim, Norway.}
\email{mats.ehrnstrom@ntnu.no}
\email{yuexun.wang@ntnu.no}

\thanks{The authors acknowledge the support by grant nos. 231668 and 250070 from the Research Council of Norway.}

\subjclass[2010]{76B03, 35S30, 76B15}
\keywords{Enhanced life span, fKdV, dispersive equations}

\begin{abstract}
We consider the fractional Korteweg--de~Vries equation \(u_t + u u_x - |D|^\alpha u_x = 0\) in the range of \(-1<\alpha<1\), \(\alpha\neq0\).   
Using basic Fourier techniques in combination with the modified energy method we extend the existence time of classical solutions with initial data of size $\varepsilon$ from \(\frac{1}{\varepsilon}\) to a time scale of \(\frac{1}{\varepsilon^2}\). This analysis, which is carried out in Sobolev space \(H^N(\mathbb{R})\), \(N \geq 3\), answers positively a question posed by Linares, Pilod and Saut in \cite{MR3188389}.
\end{abstract}
\maketitle


\section{Introduction}
We consider the fractional Korteweg--de~Vries (fKdV) equation
\begin{align}\label{eq:fKdV}
\partial_tu+u\partial_xu-|D|^\alpha\partial_xu=0,
\end{align}
where the parameter \(\alpha\) may in general take any real value. Here,  \(u\colon [0,T]\times \R \mapsto \R\) and
\begin{align*}\label{eq:|D|}
\widehat{|D|^\alpha f}(\xi)=|\xi|^\alpha\widehat{f}(\xi).
\end{align*}
More generally, under the Fourier transform 
\[
\mathcal{F}(f)(\xi)=   \int_{\R} f(x) \exp(-\mathrm{i}\xi x) \, \diff x,
\]
we let \( D = -\I\partial_x\) and denote by  $\sigma(D)$ the Fourier multiplier operator defined from its symbol \(\sigma(\xi)\) via the relation
\begin{eqnarray*}
	\mathcal{F}(\sigma(D)f)(\xi)=\sigma(\xi)\widehat{f}(\xi).
\end{eqnarray*}

\subsection{The fKdV family}
When \(\alpha=2\) and \(\alpha=1\), the fKdV equation~\eqref{eq:fKdV} reduces to the classical Korteweg--de~Vries (KdV) and Benjamin--Ono (BO) equations, respectively. For \(\alpha = 0\) one obtains the inviscid Burgers equation, being the only non-dispersive member in the fKdV family. When \(\alpha = -1\) one instead has the Burgers--Hilbert (BH) equation, and when \(\alpha = -2\) the reduced Ostrovsky (RO) equation. All in all, the fKdV family has been suggested as scale for investigating the balance of nonlinear and dispersive effects \cite{MR3188389}, especially when the dispersion is very weak, meaning \(\alpha\) takes small or negative values.\footnote{Note that the fKdV equation \eqref{eq:fKdV}, which is dispersive, is inherently different from the dissipative equation \(u_t + u u_x + |D|^{\beta} u = 0\), investigated for example in \cite{MR2455893}.}

To quantify this, note that both the KdV (\(\alpha = 2\)) and BO (\(\alpha = 1\)) equations are globally well-posed in Sobolev space \(H^s(\R)\). KdV is globally well-posed in \(H^{-1}(\R)\) \cite{MR2267286,2018arXiv180204851K}, and BO in \(L^2(\R)\) \cite{MR2291918, MR2970711}. For the fractional values \(\alpha \in (1,2)\) one has also global well-posedness in \(L^2(\R)\) \cite{MR2754070} (but see also \cite{MR3103170} for a result in weighted Sobolev spaces). For \(\alpha\) below unit order, Molinet, Pilod and Vento \cite{MR3906854} very recently established global well-posedness  in \(H^{\frac{\alpha}{2}}(\R)\) with \(\alpha>\frac{6}{7}\).  Numerical simulations from \cite{MR3317254} however suggest global well-posedness for all \(\alpha > \frac{1}{2}\). The value \(\alpha = \frac{1}{2}\) is scaling critical and believed to be critical also for the global well-posedness theory \cite{MR3188389}. For values of \(\alpha\) less than \(\frac{1}{2}\) there are only partial results or results under constraints. This is due to the presence of smooth solutions that blow-up in finite time (\(C^{1+\varepsilon}\)-blowup for \(\alpha \in [-1,0)\) \cite{MR2727172} and wave-breaking for \(\alpha \in (-1,-\frac{1}{3})\) \cite{MR3291137}) and, connected thereto, non-uniqueness issues that appear over longer times \cite{MR3248030}. To remedy this, one may either as in \cite{MR3248030} turn to more feasible (weak) solution concepts, or restrict attention to a subclass of initial data for which blow-up is excluded \cite{MR3094592}. Note that, still, both these works are for integer values of \(\alpha\), in which case the equation has a straightforward interpretation on the physical side (local in the case of \(\alpha = -2\), and involving the Hilbert transform in the case \(\alpha = -1\)).

\subsection{Long-time existence}
Because of the above difficulties for low sub-unit values of \(\alpha\), a question of interest is that of long-time existence. This question was raised specifically in \cite[Remark 4.5]{MR3188389}, and pointed out to one of the authors by the authors of that paper\footnote{The main question raised in \cite{MR3188389} concerns existence of solutions to \(u_t + \varepsilon u u_x -\varepsilon |D|^{\alpha} u_x = 0\) with initial data of unit size, which for long-time existence is rather different than \(u_t + \varepsilon u u_x - |D|^{\alpha} u_x = 0\). The latter is the equation considered in Section 4 of \cite{MR3188389} as well as in our paper.}. At that time, there was already a proof for the integer case \(\alpha = -1\) of the Burgers--Hilbert equation \cite{MR2982741}, and in fact there are two \cite{MR3348783}, but no results for the fractional cases \(\alpha \in (-1,1)\), \(\alpha \neq 0\). Although both the KdV and the Benjamin--Ono equations model water waves in specific regimes, the range \(\alpha \in (-1,1)\) in \eqref{eq:fKdV} is extra interesting as it corresponds to full-dispersion models for capillary (\(\alpha = \frac{1}{2})\) and gravity \((\alpha = -\frac{1}{2})\) waves on deep water \cite{MR3188389}. In particular, the case \(\alpha = -\frac{1}{2}\) is the homogeneous equivalent of the inhomogeneous Whitham equation (see \cite{EW16} for a fairly complete list of references for that equation), that has received quite a bit of attention lately, and wherefrom our interest comes. In fact, the method here developed is amenable to a generalisation covering a class of nonlinear dispersive equations with inhomogeneous symbols which allows for more singular interactions in low frequencies than in the homogeneous case, say, the Whitham equation, and we hope to pursue that in a forthcoming investigation.

Coming back to \cite{MR3188389} and \cite{MR2982741}, the question raised is whether classical solutions in \(H^s(\R)\) of initial size \(\varepsilon\) may be extended, in \(H^s(\R)\), beyond the hyperbolic existence time \(\bigO(1/\varepsilon)\). The standard energy estimates \cite{MR533234} on the equation \eqref{eq:fKdV} namely give  
\begin{equation}\label{1.2}
\frac{\diff}{\diff t} \| \partial_x^ku \|_{L^2}^2\lesssim \|u_x \|_{L^{\infty}} \|u \|_{H^k}^2, 
\end{equation}
which  yields only the existence of classical solution on a time scale \(\bigO(\frac{1}{\varepsilon})\).  As developed in \cite{MR3348783} for the case \(\alpha = -1\), an improvement of this may be achieved using a modified energy based on the normal form, but without switching to normal-form variables. In the context of the Burgers--Hilbert equation, this leads to a short proof of the desired existence on a time scale \(\bigO(1/\varepsilon^2)\) by working directly on the physical side in space and using estimates for the Hilbert transform. We, too, use a normal-form transform to construct a modified energy as in \cite{MR3348783}, but the fractional values of \(\alpha\) makes for substantial differences in the remaining part of the proof. In particular, the normal transform we use involves a pseudo-product related directly to the water-wave problem, and we work solely on the Fourier side to obtain the desired energy estimates by using basic \(L^2\)- and \(L^\infty\)-estimates in cubic and quartic expressions. The details of this will be explained below. We shall comment that our work uses only the normal form and the modified energy, but not the decay in time of solutions, and the fKdV equation might be globally well-posed for localised small data when \(0 < \alpha < 1/2\). Such a proof, however, probably would require a better understanding of the cancellations of the nonlinearity in low frequencies, and call for some new techniques.

\subsection{Related works} Normal forms have a long history in mathematics, often given the names of Poincar\'e or Birkhoff for their works \cite{Poincare} and \cite{MR1555257} on the topic. In modern PDEs normal forms are most often connected with Shatah (see \cite{MR803256}), and appear naturally when working with water waves and Hamiltonian formalism for such \cite{MR3502161,MR3445499}. There are clever ways to modify these transforms to deal with the loss of derivatives that may accompany them \cite{MR3460636}, and a different way is to modify the energy as done in \cite{MR3348783}. We follow the latter idea, adopting the normal transform simply to our fractional case by introducing a pseudo-product as ansatz. This pseudo-product then influences the methods used in the estimates that form the bulk of the paper. The method from \cite{MR3348783}  has been further developed in collaboration with several different authors in a series of papers, where we mention \cite{MR3535894}, \cite{MR3625189} and \cite{MR3667289} as they are most closely related to our results. It is difficult to compare them directly: the water-wave problem is clearly more involved in its original formulation and requires a lot of work just to deal with the normal transform; on the other hand, the exact relation between the cases \(\alpha = \pm \frac{1}{2}\) for the fKdV equation \eqref{eq:fKdV} and the water-wave problem is, as far as we know, not formalised on these time scales. In working with the water-wave problem the authors of \cite{MR3535894,MR3625189,MR3667289} use holomorphic coordinates to reduce the initial equation to a simpler form. It is not unthinkable that in the deep-water case one could instead relate it to the fKdV, but this remains open.

Concerning our method and the bulk of the paper it is, probably more closely related to the work \cite{MR3348783} on the Burgers--Hilbert equation, as well as the paradifferential and  \(L^2/L^\infty\)-estimates appearing in long-term Sobolev analyses of the water-wave problem such as \cite{MR3460636}, and dispersive problems such as \cite{MR3552008} (there the authors treat a cubic and complex-valued scalar equation qualitatively similar to the capillary case). We prove equivalence of the modified and classical energy, just as in \cite{MR3348783}, but a commutator that completely vanishes in the case \(\alpha = -1\) leaves high-order terms in the fractional case. In contrast to the Burgers--Hilbert case, we also work completely on the Fourier side, encountering symbols with singularities of the same form as when using time-space frequency analysis for the water-wave problem \cite{MR2993751} (see also the introduction of \cite{wang2016global}, that shows the connection to \eqref{6} quite well). Just as in these other works, Coifman--Meyer estimates cannot be used because of the frequency interactions/singularities appearing in the symbols. It is exactly these frequency interactions that arise from the dispersive nature of the problem. Instead, one relies on \(L^2 \times L^2 \times L^\infty\) and \(L^2 \times L^2 \times L^\infty \times L^\infty\)-estimates which regain the equivalence of Coifman-Meyer theory for the problem at hand. In contrast to most other works, which make use of paradifferential calculus in attacking frequency space, we will simply divide frequency space in a rudimentary way. This is not entirely enough, and finally we rely on a global transformation in frequency space to obtain an order-reducing commutator. The main steps in our analysis and the division of frequency space are outlined in Figure~\ref{fig:outline} and will be described more exactly below. 

To state our result, let \(H^s(\R) = (1-\partial_x^2)^{-s/2} L^2(\R)\) be the standard Bessel-potential (Sobolev) spaces and, for any Banach space \(\mathbb Y\),  let \(C^k([0,T];\mathbb{Y})\) be the Banach space of all bounded continuous functions \(u\colon [0,T]\rightarrow \mathbb{Y}\) with bounded and continuous derivatives up to \(k\)th order. We write \(f\lesssim g\) when \(f/g\)  is uniformly bounded from above, and \(f \eqsim g\) when \(f \lesssim g \lesssim f\).

\begin{theorem}\label{thm:main}   
	Let \(-1<\alpha<1\)\footnote{In the subcritical case \(\alpha > \frac{1}{2}\) a scaling argument is enough to conclude that solutions exist on the time scale \(\bigO(1/\varepsilon^2)\) \cite{MR1976047}. We state our results in terms of  \(\alpha \in (-1,1)\), \(\alpha \neq 0\), as this is natural and simple.}, \(\alpha\neq 0\) and \(N\geq 3\). There exists \(\varepsilon_* > 0\), such that
	for any initial data satisfying
	\begin{align*}
		\|u_0\|_{H^N(\R)}\leq\varepsilon,
	\end{align*}
	with \(\varepsilon \leq\varepsilon_*\), there exist a positive number \(T\gtrsim \frac{1}{\varepsilon^2}\) and a unique solution \(u\) in \(C([0,T];H^N(\R))\cap C^1([0,T];H^{N-2}(\R))\) of  \eqref{eq:fKdV} with \(u(0,x)=u_0(x)\) such that 	
	\begin{align*}
		\|u\|_{C([0,T];H^N(\R))}\lesssim \varepsilon.
	\end{align*}
\end{theorem}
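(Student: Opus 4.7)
The plan is to combine the classical short-time local well-posedness in $H^N(\R)$ with a modified-energy argument built from a pseudo-product normal form. Local existence on the hyperbolic time scale $\bigO(1/\varepsilon)$ is standard from the energy inequality \eqref{1.2} together with the embedding $H^N\hookrightarrow W^{1,\infty}$ for $N\geq 3$. The real objective is therefore to construct a functional $\mathcal{E}_N(u)\eqsim\|u\|_{H^N}^2$ for $\|u\|_{H^N}$ small, whose time derivative along solutions is cubic rather than quadratic in $u$; this at once yields $\frac{\diff}{\diff t}\mathcal{E}_N(u)\lesssim\|u\|_{H^N}^3$, from which a bootstrap/continuation argument gives the lifespan $\bigO(1/\varepsilon^2)$.

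To design $\mathcal{E}_N$, I would differentiate $\tfrac12\|\partial_x^N u\|_{L^2}^2$ along the flow and, after standard integration by parts and Leibniz expansion, isolate the quadratic-in-$u$ obstruction as a trilinear Fourier integral of the form
\[
\iint \mu(\xi,\eta)\,\hat u(\xi)\,\hat u(\eta)\,\hat u(-\xi-\eta)\,\diff\xi\,\diff\eta,
\]
with a polynomial symbol $\mu$ of total order $2N+1$. Because $\alpha\neq 0$, the three-wave phase
\[
\Phi(\xi,\eta) = (\xi+\eta)|\xi+\eta|^{\alpha} - \xi|\xi|^{\alpha} - \eta|\eta|^{\alpha}
\]
does not vanish identically, and after symmetrisation the cubic pseudo-product
\[
\mathcal{C}_N(u) = \iint \frac{\mu(\xi,\eta)}{\Phi(\xi,\eta)}\,\hat u(\xi)\,\hat u(\eta)\,\hat u(-\xi-\eta)\,\diff\xi\,\diff\eta
\]
can be arranged so that $\mathcal{E}_N := \tfrac12\sum_{k\leq N}\|\partial_x^k u\|_{L^2}^2 + \mathcal{C}_N(u)$ produces only cubic and quartic contributions upon differentiation along the flow.

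The principal obstacle is that $\mu/\Phi$ is a singular symbol: $\Phi$ vanishes as soon as one of the three frequencies $\xi,\eta,\xi+\eta$ tends to zero, and for $\alpha<0$ the linear symbol is itself singular at the origin. Coifman--Meyer theory is therefore unavailable, and I would instead split frequency space into the natural high-high-high, high-high-low and high-low-low regimes and estimate each separately. In each regime the cancellations in $\Phi$ must be shown to compensate for the derivative count; this both yields the equivalence $\mathcal{E}_N(u)\eqsim\|u\|_{H^N}^2$ for $\|u\|_{H^N}$ small and bounds the cubic remainder in $\frac{\diff}{\diff t}\mathcal{C}_N$ by an $L^2\times L^2\times L^\infty$ estimate with the $L^\infty$ factor supplied by $H^{N-1}\hookrightarrow L^\infty$. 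The hardest term is the quartic remainder at top order, where the naive estimate loses one derivative; this I would neutralise by the global Fourier-side change of variables anticipated in the introduction, which converts the bad piece into an order-reducing commutator controlled by an $L^2\times L^2\times L^\infty\times L^\infty$ estimate.

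With $\mathcal{E}_N\eqsim\|\cdot\|_{H^N}^2$ and $\frac{\diff}{\diff t}\mathcal{E}_N(u)\lesssim\mathcal{E}_N(u)^{3/2}$ in hand, the bootstrap closes routinely: starting from $\mathcal{E}_N(u_0)\lesssim\varepsilon^2$, the assumption $\|u(t)\|_{H^N}\leq 2C\varepsilon$ propagates over a time window of length $\gtrsim 1/\varepsilon^2$, and local well-posedness provides the required continuation and uniqueness statements.
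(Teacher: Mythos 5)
Your overall architecture (a pseudo-product normal form with symbol $\mu/\Phi$ built on the resonance function $\Phi(\xi,\eta)=(\xi+\eta)|\xi+\eta|^{\alpha}-\xi|\xi|^{\alpha}-\eta|\eta|^{\alpha}$, a modified energy equivalent to $\|u\|_{H^N}^2$, a frequency decomposition avoiding Coifman--Meyer, and a Fourier-side change of variables for the fully resonant region) matches the paper's. But there is a decisive power-counting error at the very start that invalidates the bootstrap as you state it. The \emph{unmodified} energy already satisfies a cubic bound: by \eqref{1.2}, $\frac{\diff}{\diff t}\|\partial_x^k u\|_{L^2}^2\lesssim\|u_x\|_{L^\infty}\|u\|_{H^k}^2$, which is cubic in $u$, and this is precisely what yields only the hyperbolic lifespan $\bigO(1/\varepsilon)$. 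Indeed, if $\frac{\diff}{\diff t}\mathcal{E}_N\lesssim\mathcal{E}_N^{3/2}$ with $\mathcal{E}_N(0)\eqsim\varepsilon^2$, then under the bootstrap hypothesis $\|u(t)\|_{H^N}\leq 2C\varepsilon$ one gets $\mathcal{E}_N(t)\leq\mathcal{E}_N(0)+C'\varepsilon^3 t$, which stays of size $\varepsilon^2$ only for $t\lesssim 1/\varepsilon$, not $1/\varepsilon^2$. The entire point of the normal form/modified energy method is to upgrade the energy flux from cubic to \emph{quartic}: one needs $\frac{\diff}{\diff t}\mathcal{E}_N\lesssim\mathcal{E}_N^2$ (equivalently $\lesssim\|u\|_{H^N}^4$, as in Proposition~\ref{proposition:upper bound  estimates}), whence $\mathcal{E}_N(t)\leq\mathcal{E}_N(0)+C'\varepsilon^4 t$ remains of size $\varepsilon^2$ for $t\lesssim1/\varepsilon^2$. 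Your statement that the derivative is ``cubic rather than quadratic'' misidentifies the baseline and claims a conclusion that does not follow; relatedly, your reference to bounding ``the cubic remainder in $\frac{\diff}{\diff t}\mathcal{C}_N$'' is confused, since the cubic terms produced by differentiating the correction $\mathcal{C}_N$ are exactly the ones designed to cancel the quadratic obstruction (see \eqref{23} and Lemma~\ref{lemma:F_0 G_0}); what survives and must be estimated is purely quartic.

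A secondary remark: even granting the corrected quartic target, your treatment of the hardest region is only gestured at. In the set where $\xi\eqsim\eta\eqsim\sigma\gtrsim1$ pointwise symbol bounds lose derivatives for every $\alpha$ in the range, and the paper needs not just ``an order-reducing commutator'' but a two-order gain obtained by Taylor-expanding the difference of symbols in the small ratios $\frac{\xi-\eta}{\eta}$ and $\frac{\sigma-\eta}{\eta}$ (Proposition~\ref{prop:G1 + F1 full}), together with a careful accounting of the fact that the change of variables $\eta\mapsto\xi-\eta+\sigma$ does not preserve the integration region (Lemma~\ref{lemma:sets}). You would also need the skew-symmetry \eqref{6.1} of $m$ to prove the norm equivalence itself, since the worst cubic term there cannot be handled by pointwise bounds alone. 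These are repairable, but the lifespan computation must be fixed first.
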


\subsection{Outline}
As described above, we start in {\bf Section~\ref{sec:normal form}} by finding a normal-form transformation to remove the quadratic term \(u u_x\) from the equation \eqref{eq:fKdV}, which leads to a resulting  equation with  a cubic nonlinearity. As this transformation involves a not-so-straightforward and singular Fourier symbol \(m \colon \R^2 \to \R\), in Proposition~\ref{proposition:multiplier 1} we give a global growth characterisation of it for \(\alpha \in (0,1)\) and \(\alpha \in (-1,0)\), respectively. This growth characterisation is not completely sufficient, as we need to use symmetries and the exact form of \(m\) later, but it simplifies a lot of estimates. It is worth noting that the behaviour of \(m\) is qualitatively different for positive and negative values of \(\alpha\).

  In {\bf Section~\ref{sec:modified energy}} we follow \cite{MR3348783} to define a modified energy based on the normal-form variables (here, one can neglect quartic terms as these are negligible in relation to the cubic for small data), and then show that the modified energy is equivalent to the standard  \(H^s\)-energy if the latter  is small. To prove this we use the skew-symmetry property \eqref{6.1} of the symbol \(m\), corresponding to a generalised integration by parts in two variables, combined with \(L^2/L^\infty\)-estimates using Proposition~\ref{proposition:multiplier 1}. These estimates are cubic, and not very difficult.

  In {\bf Section~\ref{sec:energy estimates}} we perform the quartic energy estimates on the modified energy from Section~\ref{sec:modified energy}. It is here that the main differences to \cite{MR3348783} and the other work cited above appear, and an attempt to illustrate our approach has been made in Figure~\ref{fig:outline}. \emph{The general strategy is to (i) estimate lower-order terms using pointwise Fourier-estimates based on the growth of the symbol \(m\), (ii) try to eliminate the highest-order terms by (a) the use of generalised integration by parts (skew-symmetries), (b) dividing up frequency space, and (c) by global transformations (changes of variables on the Fourier-side).}  The precise steps are as follows.
 \begin{figure}
\includegraphics[width=0.8\textwidth]{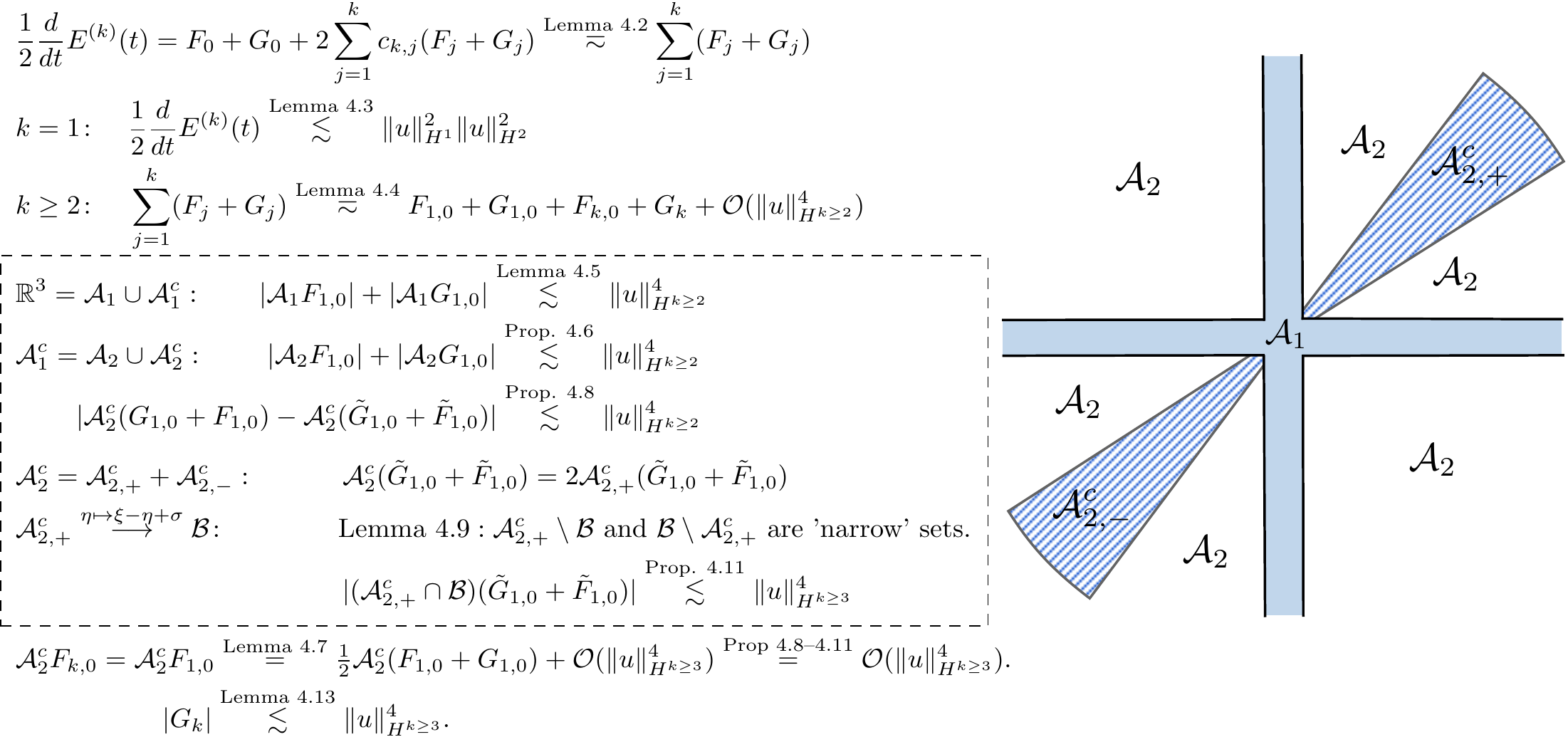}
\caption{An schematic outline of the main steps in the energy estimates carried out in Section~\ref{sec:energy estimates}. The frequency sets \(\mathcal A\) have been qualitatively sketched in the plane as to enhance their visibility (in reality they are subsets of three-dimensional space). The most subtle part of the energy estimates is highlighted: the main commutator is handled in Proposition~\ref{prop:G1 + F1 full}, based on the change of variables \(\eta - \sigma \to \xi - \eta\), which maps the set \({\mathcal A}_{2,+}^c\) to the set \(\mathcal B\) and leaves the measure \(\diff Q(\hat u)\) from \eqref{eq:dQ} invariant.}
\label{fig:outline}
\end{figure}
 We first make away with the very highest-order terms by the use of cancellation in a commutator in Lemma~\ref{lemma:F_0 G_0}. Note that this cancellation appears also for the the case \(\alpha = -1\), but in our case high-order terms still remain. We then treat the first partial energy \(E^{k}|_{k=1}\) separately in Lemma~\ref{lemma:k=1}, and estimate the lower-order terms in Lemma~\ref{lemma:ddt sim I} using the global bounds on the symbol \(m\); this leaves us with four higher-order terms, two of which can later be controlled by the others. At this point we split frequency space: in a low-frequency part \(\mathcal{A}_1\) concentrated around the axes of \((\xi, \eta, \sigma)\)-space (Lemma~\ref{lemma:A_1 estimates}), and in a large part \(\mathcal{A}_2\) where we are free to move derivatives from the high-order to the low-order terms (Lemma~\ref{prop:A_2}). After some minor adjustment to the terms in Proposition~\ref{prop:tilde FG}, and noticing a symmetry, we finally need to deal with the terms in a narrow and positive cone \(\mathcal{A}_{2,+}^c\). In this set \(\xi \eqsim \eta \eqsim \sigma \geq 1\) and pointwise Fourier estimates cannot be used to estimate the terms directly (negative values of \(\alpha\) near zero appear hardest). We therefore make a change of variables in Fourier space to one of the two terms, which \emph{does not leave the set \(\mathcal{A}_{2,+}^c\) invariant}. The set difference, however, is shown to be negligible in the estimates in Lemma~\ref{lemma:sets}, and the resulting commutator is a good one: by Taylor expanding the symbol in the two small variables \(\frac{\xi - \eta}{\eta}\) and \(\frac{\sigma - \eta}{\eta}\) in a subset of the positive cone we get two orders of cancellation and can close our estimates in Proposition~\ref{prop:G1 + F1 full}. The remaining two terms (of the total four from the beginning) can then be controlled using Lemma~\ref{lemma:F10 sim G10}.

The short proof of the main result is given in {\bf Section~\ref{sec:final}}.

\section{The normal-form transformation}\label{sec:normal form}
In the spirit of \cite{MR803256}, we introduce the normal-form transformation \(u\mapsto w\) as follows 
\begin{align}\label{3.5}
	w=u+P(u,u),
\end{align}
but where we now seek a bilinear form \(P\)  defined as a pseudo-product 
\begin{align}\label{4}
	\mathcal{F}(P(f_1,f_2))(\xi)=\int_{\R}m(\xi-\eta,\eta)\hat{f}_1(\xi-\eta)\hat{f}_2(\eta)\,\diff \eta.
\end{align}
One may of course similarly write \(m(\xi,\eta)\), but we prefer to use the variables \(\xi-\eta\) and \(\eta\) for reasons that shall hopefully soon be clear. The normal-form transformation \eqref{3.5} will be uniquely determined by the multiplier \(m(\xi-\eta,\eta)\). Note that \(P(f_1,f_2)\) is symmetric in \(f_1\) and \(f_2\) if and only if the multiplier \(m(\xi-\eta,\eta)\) is symmetric in \(\xi-\eta\) and \(\eta\).
It will be convenient for later use to write \(P(f_1,f_2)\) as a  symmetric form. For this, we write \(\mathcal{F}(u\partial_xu)(\xi)\) as 
\begin{align}\label{4.5}
	\mathcal{F}(u\partial_xu)(\xi)=\frac{1}{2}\int_{\mathbb{R}}(\mathrm{i}\xi)\hat{u}(\xi-\eta)\hat{u}(\eta)\, \diff \eta.
\end{align} 
Direct calculations show
\begin{equation}\label{5}
	\begin{aligned}
		\partial_tw-|D|^\alpha\partial_xw &= \partial_tu-|D|^\alpha\partial_xu + P(\partial_tu,u) +P(u,\partial_tu)\\
		&\quad - |D|^\alpha\partial_xP(u,u)\\
		&= -u\partial_xu +P(|D|^\alpha\partial_x u-u\partial_x u,u) + P(u,|D|^\alpha\partial_x u-u\partial_x u)\\
		&\quad - |D|^\alpha\partial_xP(u,u)\\
		&= -u\partial_xu + P(|D|^\alpha\partial_x u,u) + P(u,|D|^\alpha\partial_x u)\\
		&\quad -|D|^\alpha\partial_xP(u,u) + R(u),
	\end{aligned}
\end{equation}
where \(R(u)\) is a cubic nonlinearity. In light of \eqref{4}, one has  
\begin{equation}\label{5.5}
	\begin{aligned}
		&\mathcal{F}\big(P(|D|^\alpha\partial_x u,u)\big)(\xi)= \I \int_{\R} (\xi-\eta) |\xi-\eta|^\alpha  m(\xi-\eta,\eta)\hat{u}(\xi-\eta)\hat{u}(\eta) \, \diff\eta,\\
		&\mathcal{F}\big(P(u,|D|^\alpha\partial_x u)\big)(\xi)= \I \int_{\R} \eta |\eta|^\alpha  m(\xi-\eta,\eta)\hat{u}(\xi-\eta)\hat{u}(\eta)\, \diff \eta,\\
		&\mathcal{F}\big(|D|^\alpha\partial_xP(u,u)\big)(\xi)= \I \int_{\R} \xi |\xi|^\alpha m(\xi-\eta,\eta)\hat{u}(\xi-\eta)\hat{u}(\eta)\, \diff \eta.
	\end{aligned}
\end{equation}
It follows from \eqref{4.5}, \eqref{5} and \eqref{5.5} that
\begin{equation}\label{5.6}
	\begin{aligned}
		&\mathcal{F}(\partial_tw-|D|^\alpha\partial_xw)(\xi)\\
		&=\I \int_{\R}\left( -\frac{\xi}{2}+m(\xi-\eta,\eta)\big[(\xi-\eta) |\xi-\eta|^\alpha+ \eta |\eta|^\alpha
		- \xi |\xi|^\alpha\big]\right)\\
		&\quad\times\hat{u}(\xi-\eta)\hat{u}(\eta)\, \diff \eta+\F(R(u))(\xi).
	\end{aligned}
\end{equation}
In order to remove the quadratic nonlinearity in the  equation of  \eqref{5.6}, we should set
\begin{align}\label{5.8}
	-\frac{\xi}{2}+m(\xi-\eta,\eta)\big[(\xi - \eta) |\xi-\eta|^\alpha + \eta |\eta|^\alpha - \xi|\xi|^\alpha\big]=0,
\end{align}
meaning
\begin{align}\label{6}
	m(\xi-\eta,\eta)=\frac{\xi}{2\big[|\xi-\eta|^\alpha(\xi-\eta)+|\eta|^\alpha\eta-|\xi|^\alpha\xi\big]}.
\end{align}
Thus \(m\) is symmetric in \(\xi - \eta\) and \(\eta\), and
\begin{align}\label{6.1}
	m(\xi-\eta,\eta)\eta+m(\eta-\xi,\xi)\xi=0.
\end{align} 
Note that for general variables \(m(a,b)\), 
\begin{align}\label{6.25}
	m(a,b)=\frac{a+b}{2\big[|a|^\alpha a+|b|^\alpha b-|a+b|^\alpha (a+b)\big]}.
\end{align}
With the above choise of \(m\) the equation \eqref{5.6} reduces to 
\begin{align}\label{6.2}
	\partial_tw-|D|^\alpha\partial_xw =R(u).
\end{align}  
We can now use \eqref{6} to recover the formula \eqref{3.5} in the case of \(\alpha=-1\) which was first obtained by Biello and Hunter \cite{MR2599457} by a symplectic near-identity transformation based on a Lie-series expansion of the corresponding Hamiltonian. Indeed,
\eqref{6} yields
\begin{align*}
	m(\xi-\eta,\eta)
	={\textstyle \frac{1}{2}}\xi\sign(\xi)\sign\big((\xi-\eta)\eta\big),
\end{align*}
meaning
\begin{align*}
	P(u,u)={\textstyle \frac{1}{2}}|D|(\mathbb{H}u)^2.
\end{align*}
Thus one has
\begin{align}\label{6.3}
	w=u+{\textstyle \frac{1}{2}}|D|(\mathbb{H}u)^2.
\end{align}
Similarly, one can also deduce  the normal-form transformation  when  \(\alpha=2\) 
(cf. \cite{MR2982741}), as
\begin{align}\label{6.5}
	w=u- {\textstyle \frac{1}{6}} (\partial_x^{-1}u)^2.
\end{align}

\begin{proposition}\label{proposition:multiplier 1} 
	The following relations hold globally in \((\xi,\eta)\)-space. 
	\begin{equation}\label{6.8a}
		|m(\xi-\eta,\eta)| \eqsim \frac{|\xi-\eta|^{1-\alpha}}{|\eta|}+\frac{|\eta|^{1-\alpha}}{|\xi-\eta|},
	\end{equation}
	for \(0<\alpha<1\). When \(0<\beta=-\alpha<1\), one instead has
	\begin{equation}\label{6.8b}
		|m(\xi-\eta,\eta)| \eqsim  |\xi|^\beta \left(\frac{|\xi-\eta|^{1-\beta}}{|\eta|^{1-\beta}}
		+\frac{|\eta|^{1-\beta}}{|\xi-\eta|^{1-\beta}}\right).
	\end{equation}
\end{proposition}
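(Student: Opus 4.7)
The plan is to reduce the proposition, by homogeneity and symmetry, to a problem about a single-variable function. Writing \(a = \xi-\eta\), \(b = \eta\), and
\[
\Phi(a,b) = |a|^\alpha a + |b|^\alpha b - |a+b|^\alpha (a+b), \qquad |m(a,b)| = \frac{|a+b|}{2|\Phi(a,b)|},
\]
I note that \(\Phi\) is symmetric in \((a,b)\) and satisfies \(\Phi(-a,-b) = -\Phi(a,b)\), so that \(|m|\) is invariant under \((a,b) \mapsto (b,a)\) and \((a,b) \mapsto (-a,-b)\). It therefore suffices to work in the regime \(|a| \geq |b|\), \(a > 0\). Since \(\Phi\) is positively homogeneous of degree \(\alpha+1\), setting \(t = b/a \in [-1,1]\) reduces the estimate of \(|\Phi|\) to that of the single-variable function
\[
\phi(t) = 1 + |t|^\alpha t - |1+t|^\alpha(1+t), \qquad |\Phi(a,b)| = a^{\alpha+1}|\phi(t)|.
\]

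Next I would pin down the zeros and local behavior of \(\phi\) on \([-1,1]\). Direct inspection shows \(\phi(0) = \phi(-1) = 0\), \(\phi(1) = 2(1-2^\alpha) \neq 0\) for \(\alpha \neq 0\), and \(\phi'(t) = (\alpha+1)(|t|^\alpha - |1+t|^\alpha)\) has its only interior zero at \(t = -1/2\) with \(\phi(-1/2) = 1 - 2^{-\alpha} \neq 0\); hence the zero set of \(\phi\) is exactly \(\{0,-1\}\). Taylor expansion at these points gives
\[
\phi(t) = -(\alpha+1)t + |t|^\alpha t + O(t^2) \quad \text{as } t \to 0,
\]
with the analogous formula in \(1+t\) near \(t = -1\). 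Reading off the dominant term yields locally \(|\phi(t)| \eqsim |t|\) near \(0\) and \(|\phi(t)| \eqsim |1+t|\) near \(-1\) when \(\alpha > 0\); for \(\alpha < 0\) (\(\beta = -\alpha\)) the nonlinear term dominates, so \(|\phi(t)| \eqsim |t|^{1-\beta}\) near \(0\) and \(|\phi(t)| \eqsim |1+t|^{1-\beta}\) near \(-1\). Combined with the strict positivity of \(|\phi|\) away from its zeros (via compactness on the complement of small neighborhoods of \(\{0,-1\}\)), this yields the global equivalences
\[
|\phi(t)| \eqsim |t||1+t| \text{ for } \alpha > 0, \qquad |\phi(t)| \eqsim |t|^{1-\beta}|1+t|^{1-\beta} \text{ for } \alpha < 0,
\]
uniformly on \([-1,1]\).

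Unwinding the reductions through \(|\Phi(a,b)| = a^{\alpha+1}|\phi(b/a)|\) yields, in the regime \(|a| \geq |b|\),
\[
|\Phi(a,b)| \eqsim |a|^{\alpha-1}|b||a+b| \text{ if } \alpha > 0, \qquad |\Phi(a,b)| \eqsim |a|^{\beta-1}|b|^{1-\beta}|a+b|^{1-\beta} \text{ if } \alpha < 0,
\]
and the complementary regime \(|b| \geq |a|\) follows by the symmetry of \(\Phi\). Dividing \(|a+b|\) by twice these expressions produces \(|m|\) in terms of \(\max(|a|,|b|)\) and \(\min(|a|,|b|)\); since the exponent \(2-\alpha\) is positive, \(\max^{1-\alpha}/\min\) dominates \(\min^{1-\alpha}/\max\), so the right-hand side of \eqref{6.8a} is equivalent to this \(\max/\min\) expression, and an identical comparison for the \(\beta\)-weighted terms produces \eqref{6.8b}. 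The only genuinely delicate point in this plan is the global lower bound on \(|\phi|\); the two Taylor expansions supply matching two-sided local estimates, after which compactness on the complement of neighborhoods of the two zeros closes everything up.
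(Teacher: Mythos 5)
Your proposal is correct and follows essentially the same route as the paper: both reduce by the degree-\((1+\alpha)\) homogeneity of the denominator to a one-variable function on a compact set (you use the ratio \(t=\eta/(\xi-\eta)\) after a symmetry reduction, the paper uses the polar angle \(\theta\)), locate the zeros, determine their order (\(1\) for \(\alpha>0\), \(1+\alpha=1-\beta\) for \(\alpha<0\)) by Taylor expansion, and conclude by boundedness away from the zeros. The only cosmetic difference is that your chart sees two zeros instead of the paper's six on the circle; the substance is identical.
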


\begin{remark} 
	The formulas \eqref{6.8a} and \eqref{6.8b} are valid also outside of our range of interest \(\alpha \in (-1,1)\), \(\alpha \neq 0\). Note that they imply that the operator \(P\) is not invertible at low frequencies, and possesses extra derivatives at high frequencies. Sections~\ref{sec:modified energy} and~\ref{sec:energy estimates} will mainly be devoted to treating the difficulties this brings in the energy estimates.      
	
\end{remark}

\begin{proof}  For \(r\geq0\), \(0\leq\theta<2\pi\), let	
	\begin{align*}
		\xi-\eta=r\cos\theta,\quad \eta=r\sin\theta,
	\end{align*}
	\begin{align*}
		\mu(\theta)=|\cos\theta|^\alpha\cos\theta+|\sin\theta|^\alpha\sin\theta
		-|\cos\theta+\sin\theta|^\alpha(\cos\theta+\sin\theta)
	\end{align*}
	and
	\begin{align*}
		\lambda(\theta)=\frac{\cos\theta+\sin\theta}{\mu(\theta)}.
	\end{align*}
	Then it follows from \eqref{6} that
	\begin{align*}
		m(\xi-\eta,\eta)=m(r\cos\theta,r\sin\theta)=\frac{\lambda(\theta)}{2r^{\alpha}}.
	\end{align*}
	For all \(\alpha \in (-1,1)\), \(\alpha\neq 0\), we then have  \(\mu(\theta)=0\) if and only if either of the three terms \(\cos\theta\), \(\sin\theta\) or \(\sin\theta + \cos\theta\) vanishes. Thus the six zeros \(0, \frac{1}{2} \pi,\frac{3\pi}{4}, \pi, \frac{3}{2} \pi, \frac{7\pi}{4}\) are the only zeros of \(\mu\),  and by Taylor's formula and L'Hospital's law one easily calculates that they are single (of order exactly 1) when \(\alpha \in (0,1)\). Since \(\mu\) is continuous, one therefore has  
	\begin{align*}
		\mu =(\cos+\sin)(\cos)(\sin) \, h,
	\end{align*}
	where \(h\)  is a bounded  function that is also bounded away from zero (but not necessarily continuous).		
	We conclude that
	\begin{align*}
		m(\xi-\eta,\eta)&=\frac{\cos\theta+\sin\theta}{2r^\alpha(\cos\theta+\sin\theta)\cos\theta\sin\theta \, h(\theta)}\\
		&=\frac{r^{2-\alpha}}{2(r\cos\theta) (r\sin\theta) \, h(\theta)},
	\end{align*}
	which  finishes the proof since \(h\) is bounded away from both zero and infinity.
	
	If \(-1<\alpha<0\) then, by Taylor's formula and L'Hospital law, one instead calculates that the order of the above zeros of \(\mu\) is \(1+ \alpha\), meaning
	\begin{align*} 	
		\lim_{\theta\rightarrow \theta_*}\frac{|\mu(\theta)|}{|\sin\theta|^{1+\alpha}} \in (0,\infty), \qquad \theta_* \in \{0, \pi\},
	\end{align*}	
	and similarly with \(\sin \theta\) replaced by either \(\cos\theta\) or \(\cos\theta + \sin \theta\) for their respective zeros. By the same argument as above, 
	\begin{align*}
		\mu &=|\cos+\sin|^{1+\alpha} |\cos|^{1+\alpha} |\sin|^{1+\alpha} \tilde{h},
	\end{align*}
	where \(\tilde{h}\) is bounded as well as bounded away from zero (but not continuous). Hence,
	\begin{align*}
		|m(\xi-\eta,\eta)| &=\frac{|\cos\theta+\sin\theta|}{2r^\alpha|\cos\theta+\sin\theta|^{1+\alpha} |\cos\theta|^{1+\alpha} |\sin\theta|^{1+\alpha} |\tilde{h}(\theta)|}\\
		&=\frac{r^{2+2\alpha}}{|r(\cos\theta+\sin\theta)|^{\alpha} |r \cos\theta|^{1+\alpha} |r\sin\theta|^{1+\alpha} |\tilde{h}(\theta)|},
	\end{align*}
	which  completes the proof.
\end{proof}

\section{The modified energy}\label{sec:modified energy}
Standard theory \cite{MR533234} can be used to show that there exists a positive number \(T\gtrsim \frac{1}{\varepsilon}\) and a unique  solution \(u\in C([0,T];H^N(\R))\) of \eqref{eq:fKdV}. Therefore, to prove Theorem \ref{thm:main}, we need only to prove an a priori \(H^N(\R)\)-bound for classical solution \(u\in C([0,T];H^N(\R))\). 
It follows from \eqref{6.2} that \(w\) obeys  
\begin{align}\label{eq:modified by norm form}
	\partial_tw-|D|^\alpha\partial_xw
	= R(u),
\end{align}
with \(R(u)\) a cubic nonlinearity. There is a loss of derivatives if the standard energy estimates are applied directly to \eqref{eq:modified by norm form}. 
To get around this difficulty, one possible way is to work on the original equation \eqref{eq:fKdV} with a modified energy that can captures the interplay between dispersive and nonlinear effects. This is the technique from \cite{MR3348783}. In light of \eqref{3.5} one calculates that      
\begin{align}\label{7}
	\|\partial_x^kw\|_{L^2}^2=\|\partial_x^ku\|_{L^2}^2+2\big(\partial_x^ku, \partial_x^kP(u,u)\big)_2+\|\partial_x^kP(u,u)\|_{L^2}^2,
\end{align} 
where we use  \((f,g)_2=\int_{\R}fg \, \diff x\) to denote the inner product in  \(L^2(\R)\). As observed in \cite{MR3348783}, the last term on the right hand side of \eqref{7} is irrelevant to the cubically nonlinear energy estimates since it is quartic. This suggests removing that term from \eqref{7}, while keeping the other two as part of a modified energy
\begin{align}\label{eq:modified k-energy}
	E^{(k)}(t)=\|\partial_x^ku\|_{L^2}^2+2\big(\partial_x^ku, \partial_x^kP(u,u)\big)_2.
\end{align}
For data that is small in \(H^N(\R)\) the modified energy is almost equivalent to the Sobolev energy. To handle the low-frequency singularities in the symbol \(m\), however, we shall use only \(E^{(k)}\) starting from \(k=1\), adding a simple \(L^2\)-term to our total energy to capture the inhomogeneous (zeroth order) part of a solution.

\begin{lemma}\label{lemma:lower bound  estimates}  
Let \(\alpha \in (-1,1)\), \(\alpha \neq 0\). There then exists \(\varepsilon > 0\) such that
\begin{align}\label{8}
  \sum_{k=1}^NE^{(k)}(t)+\|u\|_{L^2}^2 \eqsim \|u\|_{H^N}^2,
\end{align}
uniformly for \(\|u\|_{H^N} < \varepsilon\).
\end{lemma}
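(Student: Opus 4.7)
The plan is to reduce \eqref{8} to a cubic bound on the cross-terms. Expanding the definition \eqref{eq:modified k-energy}, one has
\begin{align*}
\sum_{k=1}^N E^{(k)}(t) + \|u\|_{L^2}^2 &= \sum_{k=0}^N \|\partial_x^k u\|_{L^2}^2 + 2\sum_{k=1}^N (\partial_x^k u, \partial_x^k P(u,u))_2 \\
&\eqsim \|u\|_{H^N}^2 + 2\sum_{k=1}^N (\partial_x^k u, \partial_x^k P(u,u))_2,
\end{align*}
where the equivalence uses the standard characterisation $\|u\|_{H^N}^2 \eqsim \sum_{k=0}^N \|\partial_x^k u\|_{L^2}^2$. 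It therefore suffices to prove the cubic cross-term estimate
\begin{equation*}
\biggl|\sum_{k=1}^N (\partial_x^k u, \partial_x^k P(u,u))_2\biggr| \lesssim \|u\|_{H^N}^3,
\end{equation*}
from which \eqref{8} follows by choosing $\varepsilon$ small enough that the implicit constant times $\|u\|_{H^N}$ is at most $\tfrac{1}{2}$, absorbing the cross-term into the leading quadratic.

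To prove the cubic estimate I would use Plancherel and the definition \eqref{4} of $P$ to express each cross-term in Fourier variables as
\begin{equation*}
(\partial_x^k u, \partial_x^k P(u,u))_2 = \frac{(-1)^k}{2\pi}\int\!\!\int \xi^{2k} m(\xi-\eta,\eta) \, \hat u(\xi) \, \overline{\hat u(\xi-\eta) \hat u(\eta)} \, \diff\eta \, \diff\xi,
\end{equation*}
exploiting that $u$ is real-valued and (as one checks from \eqref{6}) $m$ is real. Estimating in absolute value and inserting Proposition~\ref{proposition:multiplier 1}, the symmetry $\eta \leftrightarrow \xi-\eta$ of the integrand reduces the analysis to a single representative term in each of \eqref{6.8a}--\eqref{6.8b}; the elementary inequality $|\xi|^{2k} \lesssim |\xi-\eta|^{2k} + |\eta|^{2k}$ then distributes the $2k$ derivatives among the three Fourier factors.

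The resulting trilinear integrals I would handle by standard $L^2 \times L^2 \times L^\infty$ bounds: two factors of the form $|\cdot|^a |\hat u(\cdot)|$ with $a \leq N$ are kept in $L^2$ via Plancherel, while the remaining factor contributes $\|u\|_{L^\infty} \lesssim \|u\|_{H^N}$ through the Sobolev embedding $H^s(\R) \hookrightarrow L^\infty(\R)$, valid since $N \geq 3 > \tfrac{1}{2}$. Splitting the frequency space into $\{|\eta| \leq 1\}$ and $\{|\eta| > 1\}$ (and likewise in $\xi-\eta$) separates the delicate low-frequency part from the straightforward high-frequency one.

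The hard part will be the low-frequency singularity of $m$: in the case $0<\alpha<1$ of \eqref{6.8a} it is $1/|\eta|$, which is non-integrable in one dimension. This is precisely why the modified energy uses $E^{(k)}$ only for $k \geq 1$ (with the added $L^2$-term capturing the inhomogeneous piece): the extra factor $|\xi|^{2k}$ provides the needed compensation, since $|\xi| \leq |\xi-\eta| + |\eta|$ lets one pair the singular $1/|\eta|$ with a high-frequency factor $|\xi-\eta|^{k}$ absorbable into $\|u\|_{H^N}$. The range $-1 < \alpha < 0$ is easier, as \eqref{6.8b} gives only the integrable singularity $1/|\eta|^{1-\beta}$ with $\beta = -\alpha$.
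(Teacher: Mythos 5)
Your reduction to the cubic bound $|\sum_k(\partial_x^ku,\partial_x^kP(u,u))_2|\lesssim\|u\|_{H^N}^3$ and the absorption argument are fine, but the way you propose to prove that cubic bound does not work: purely pointwise symbol estimates combined with H\"older lose derivatives. First, the inequality $|\xi|^{2k}\lesssim|\xi-\eta|^{2k}+|\eta|^{2k}$ discards the $k$ derivatives that the factor $\overline{\hat u(\xi)}$ naturally absorbs and would require controlling up to $2N$ derivatives on a single factor. Even with the correct Leibniz splitting (keep $\xi^k$ on $\overline{\hat u(\xi)}$ and distribute only the remaining $\xi^k$), the worst term is $A_0=(\partial_x^ku,P(u,\partial_x^ku))_2$, whose integrand is $m(\xi-\eta,\eta)\hat u(\xi-\eta)(\I\eta)^k\hat u(\eta)\overline{(\I\xi)^k\hat u(\xi)}$. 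In the region $|\xi-\eta|\ll|\eta|\eqsim|\xi|$ one has, for $0<\alpha<1$, $|m(\xi-\eta,\eta)|\eqsim|\eta|^{1-\alpha}/|\xi-\eta|$ by \eqref{6.8a}, so after placing $|\eta|^k|\hat u(\eta)|$ and $|\xi|^k|\hat u(\xi)|$ in $L^2$ there remains an unabsorbable factor $|\eta|^{1-\alpha}$ on a high frequency (a genuine loss of $1-\alpha$ derivatives; for $\alpha=-\beta<0$ the loss is a full derivative), together with the non-integrable singularity $1/|\xi-\eta|$ sitting on the derivative-free factor $\hat u(\xi-\eta)$, which is not controlled by $\|u\|_{H^N}$. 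So it is the high-frequency growth of $m$, not its low-frequency singularity, that is the real obstruction, and your proposed $L^2\times L^2\times L^\infty$ scheme cannot close there. The paper itself warns that such pointwise monomial estimates are ``untrue if one wants a bound in $H^k(\R)$.''

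What is missing is the algebraic cancellation. The paper writes $-\I\xi=-\I(\xi-\eta)-\I\eta$ in $A_0$ (integration by parts on the Fourier side), splitting $A_0=A_0^1+A_0^2$, and then uses the skew-symmetry \eqref{6.1}, $m(\xi-\eta,\eta)\eta+m(\eta-\xi,\xi)\xi=0$, together with the reality of $u$ to show $A_0+A_0^2=0$, hence $A_0=\tfrac12A_0^1$. The surviving term $A_0^1$ carries the monomial $(\xi-\eta)\eta^k\xi^{k-1}$, which simultaneously removes one derivative from the high-frequency factor (curing the loss) and cancels the low-frequency singularities of $m$; only then do the $L^2\times L^2\times H^1$ estimates of the type you describe apply, and they also suffice for the commutator terms $A_j$, $1\le j\le k-1$. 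Without invoking \eqref{6.1} (or an equivalent structural identity for $m$) your argument cannot be completed.
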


\begin{proof} Let \(k\geq1\). We will be done if we can show that 
\begin{equation}\label{eq:cubic estimate}
\big(\partial_x^ku, \partial_x^kP(u,u)\big)_2 \lesssim \varepsilon \|u\|_{H^k}^2,
\end{equation}
whenever \(\|u\|_{H^N} < \varepsilon\). We first use the symmetry of \(P\) to write  
	\begin{align*}
		&\big(\partial_x^ku, \partial_x^kP(u,u)\big)_2\\
		&=2\big(\partial_x^ku, P(u,\partial_x^ku)\big)_2+\sum_{j=1}^{k-1} c_{k,j} \big(\partial_x^ku, P(\partial_x^ju,\partial_x^{k-j}u)\big)_2\\
		&=\colon 2A_0+\sum_{j=1}^{k-1}A_j,
	\end{align*}
where \(A_0\) is the worst term in view of Proposition~\ref{proposition:multiplier 1}. The precise structure of \(m\), however, allows us to treat it using integration by parts as follows. Note that on the Fourier side, integration by parts corresponds to formula \(-\I\xi = -(\I(\xi - \eta) + \I\eta)\), where the minus signs in front of \(\I\xi\) comes from the complex conjugate in the inner product. Hence, 
\begin{equation}\label{10}
\begin{aligned}
A_0 &=\iint_{\mathbb{R}^2}m(\xi-\eta,\eta)\hat{u}(\xi-\eta) (\I \eta)^k \hat{u}(\eta) \overline{(\I \xi)^k \hat{u}(\xi)}\,\diff\eta\, \diff \xi\\
	&=-\iint_{\mathbb{R}^2}m(\xi-\eta,\eta) \I (\xi - \eta) \hat{u}(\xi - \eta) (\I \eta)^k \hat{u}(\eta) \overline{(\I \xi)^{k-1} \hat{u}(\xi)}\,\diff\eta\, \diff \xi\\
	&\quad-\iint_{\mathbb{R}^2}m(\xi-\eta,\eta) \hat{u}(\xi - \eta) (\I \eta)^{k+1} \hat{u}(\eta) \overline{(\I \xi)^{k-1} \hat{u}(\xi)}\,\diff\eta\, \diff \xi\\
	&=\colon A_{0}^1+A_{0}^2.
\end{aligned}
\end{equation}
We first calculate the term \(A_{0}^2\). Since \(u\) is real, one has \(\overline{\hat u(\xi)} = \hat u(-\xi)\). Note also that \(m\) is invariant under the map \((\xi, \eta) \mapsto -(\xi, \eta)\). This, together with the additional change of variables \(\xi \leftrightarrow \eta\), shows that
\begin{align*}
	A_{0}^2
	=-\iint_{\mathbb{R}^2}m(\eta-\xi,\xi)\hat{u}(\xi-\eta) (\I \eta)^{k-1} \hat u(\eta) \overline{(\I \xi)^{k+1} \hat u(\xi)} \,\diff\eta \, \diff \xi.
\end{align*}
In light of \eqref{6.1} this yields 
\begin{equation}\label{11}
\begin{aligned}
	A_0+A_{0}^2
	&=\iint_{\mathbb{R}^2}\eta^{-1}\big[m(\xi-\eta,\eta)\eta+m(\eta-\xi,\xi)\xi\big]\\
	&\quad\times \hat{u}(\xi-\eta)(\I \eta)^k \hat{u}(\eta) \overline{(\I \xi)^k \hat{u}(\xi)}\,\diff\eta\, \diff \xi =0.
\end{aligned}
\end{equation}
Hence, it is sufficient to estimate the terms \(A_0^1\) and \(A_j\), \(j=1,\cdots,k-1\). Because of the different singularities 
at different ranges of \(\alpha\) the rest of the proof is divided into two cases.

\subsection*{The case of \(\alpha \in (0,1)\)} Note that the factor \(\I(\xi - \eta) (\I \eta)^k\) appearing in \(A_0^1\) eliminates the low-frequency singularities of \(m(\xi-\eta,\eta)\) because \(k \geq 1\). We are thus left only with estimating the high frequencies. To that aim, 
we first use the triangle inequality to estimate (from \eqref{6.8a}),
\begin{align*}
|m(\xi-\eta,\eta)|
&\lesssim\frac{|\xi-\eta|^{1-\alpha}}{|\eta|}+\frac{|\xi-\eta|^{1-\alpha}+|\xi|^{1-\alpha}}{|\xi-\eta|}\\
&=\frac{|\xi-\eta|^{1-\alpha}}{|\eta|}+\frac{1}{|\xi-\eta|^\alpha}+\frac{|\xi|^{1-\alpha}}{|\xi-\eta|}.
\end{align*}
It follows that the double integral \(A_0^1\) may be \(L^2 \times L^2 \times H^1\)-estimated in modulus by (here, we have used subindices to indicate from which factors in the integral the different factors in the H\"older estimates come from)
\begin{equation}\label{12}
\begin{aligned}
&\bigr\||D|^{1-\alpha} \partial_x u\bigr\|_{L^2_{\xi - \eta}} \bigr\| |D|^{-1} \partial_x^ku \bigr\|_{H^1_\eta}
\bigr\|\partial_x^{k-1}u\bigr\|_{L^2_\xi}\\
&\quad+\bigr\||D|^{-\alpha}\partial_xu\bigr\|_{L^2_{\xi - \eta}}\bigr\|\partial_x^ku\bigr\|_{L^2_\eta} 	\bigr\|\partial_x^{k-1}u\bigr\|_{H^1_\xi}\\
&\quad+\bigr\||D|^{-1} \partial_xu\bigr\|_{H^1_{\xi - \eta}}\bigr\|\partial_x^ku\bigr\|_{L^2_\eta} \bigr\||D|^{1-\alpha}\partial_x^{k-1}u\bigr\|_{L^2_\xi}\\
&\lesssim \|u\|_{H^{2-\alpha}}\|u\|_{H^{k}}\|u\|_{H^k}+\|u\|_{H^{1-\alpha}}\|u\|_{H^{k}}^2  +\|u\|_{H^1}\|u\|_{H^{k}}\|u\|_{H^{k-\alpha}}\\
&\lesssim\|u\|_{H^{2-\alpha}}\|u\|_{H^{k}}^2.
\end{aligned}
\end{equation}
In light of \eqref{10}-\eqref{11} one therefore has
\begin{align}\label{13}
	|A_0|
	\lesssim\|u\|_{H^{2-\alpha}}\|u\|_{H^{k}}^2.
\end{align}
The terms \(A_j\), \(j=1,\ldots,k-1\), are dealt with in a similar fashion. Since \(1 \leq j \leq k -1\) with \(k \geq 1\), the derivatives in \(P(\partial_x^ju,\partial_x^{k-j}u)\) will cancel the unit order low-frequencies singularities in \(m\). The high frequencies may in this case be handled directly with \eqref{6.8a}, and one finds that \(|A_j|\) can be bounded by
\begin{equation}\label{15}
\begin{aligned}
&\bigr\||D|^{1-\alpha}\partial_x^j u\bigr\|_{L^2_{\xi - \eta}} \bigr\||D|^{-1}  \partial_x^{k-j} u\bigr\|_{H^1_\eta} \bigr\|\partial_x^{k} u\bigr\|_{L^2_\xi}\\
&\quad +\bigr\||D|^{-1} \partial_x^j u\bigr\|_{H^1_{\xi - \eta}}\bigr\||D|^{1-\alpha}\partial_x^{k-j} u\bigr\|_{L^2_\eta} \bigr\|\partial_x^{k}u\bigr\|_{L^2_\xi}\\
&\lesssim \|u\|_{H^{j+1-\alpha}}\|u\|_{H^{k-j}}\|u\|_{H^{k}}
 +\|u\|_{H^j}\|u\|_{H^{k+1-j-\alpha}}\|u\|_{H^{k}}\\
&\lesssim\|u\|_{H^{k}}^3.
\end{aligned}
\end{equation}
We conclude from \eqref{13} and \eqref{15} that
\begin{align*}
	|\big(\partial_x^ku, \partial_x^kP(u,u)\big)_2|\lesssim (\|u\|_{H^{2-\alpha}}+\|u\|_{H^{k}})\|u\|_{H^{k}}^2,
\end{align*}
which in view of  \(1 \leq k \leq N\) yields \eqref{eq:cubic estimate} for \(\|u\|_{H^N} < \varepsilon\). This proves the result for \(\alpha \in (0,1)\).

\subsection*{The case of \(\beta = -\alpha \in (0,1)\)}  The triangle inequality applied to \eqref{6.8b} now yields
\begin{equation}\label{15.5}
\begin{aligned}
|m(\xi-\eta,\eta)|
	&\lesssim\frac{|\xi-\eta|^{1-\beta}|\xi|^\beta}{|\eta|^{1-\beta}}
	+\frac{(|\xi|^{1-\beta}+|\xi-\eta|^{1-\beta})|\xi|^\beta}{|\xi-\eta|^{1-\beta}}\\
	&\lesssim \frac{|\xi|}{|\eta|^{1-\beta}}
	+\frac{|\xi|}{|\xi-\eta|^{1-\beta}}+|\xi|^\beta.
\end{aligned}
\end{equation}
Thus
\begin{equation}\label{16}
\begin{aligned}
	|A_{0}^{1}|
	&\lesssim \|u\|_{H^2}\|u\|_{H^{k-1+\beta}}\|u\|_{H^{k}}+\|u\|_{H^{1+\beta}}\|u\|_{H^{k}}^2\\
	&\lesssim\|u\|_{H^2}\|u\|_{H^{k}}^2,
\end{aligned}
\end{equation}
which combined with \eqref{10}--\eqref{11} shows that
\(
	|A_0|
	\lesssim\|u\|_{H^2}\|u\|_{H^{k}}^2.
\)
For the terms \(A_j\), \(j=1,\ldots,k-1\), we now estimate \(m\) symmetrically in \(\eta\) and \(\xi - \eta\).
\begin{equation}\label{18}
\begin{aligned}
|m(\xi-\eta,\eta)|
&\lesssim \left(\frac{|\xi-\eta|^{1-\beta}}{|\eta|^{1-\beta}} +\frac{|\eta|^{1-\beta}}{|\xi-\eta|^{1-\beta}} \right) \left(|\xi-\eta|^\beta+|\eta|^\beta \right)\\
&=\underbrace{\frac{|\xi-\eta|}{|\eta|^{1-\beta}}+\frac{|\xi-\eta|^{1-\beta}}{|\eta|^{1-2\beta}}}_{m_1(\xi-\eta,\eta)} +\underbrace{\frac{|\eta|}{|\xi-\eta|^{1-\beta}} + \frac{|\eta|^{1-\beta}}{|\xi-\eta|^{1-2\beta}}}_{m_1(\eta,\xi-\eta)}.
\end{aligned}
\end{equation}
By the symmetry in \(\xi- \eta\) and \(\eta\), and the same symmetry in the terms \(A_j\), terms expressed as \(m_1(\eta,\xi-\eta)\) can be estimated exactly as the terms \(m_1(\xi-\eta,\eta)\). Note that
\begin{equation}\label{eq:estimate eta < xi - eta}
\frac{|\xi-\eta|^{1-\beta}}{|\eta|^{1-2\beta}} = \frac{|\eta|^\beta}{|\xi - \eta|^\beta}\frac{|\xi-\eta|}{|\eta|^{1-\beta}} \leq \frac{|\xi-\eta|}{|\eta|^{1-\beta}},  
\end{equation}
when \(|\eta| \leq |\xi - \eta|\), while
\begin{equation}\label{eq:estimate eta > xi - eta}
\frac{|\xi-\eta|^{1-\beta}}{|\eta|^{1-2\beta}} =  \frac{|\xi-\eta|^{1-\beta}}{|\eta|^{1-\beta}} |\eta|^{\beta} \leq|\eta|^\beta,
\end{equation}
when \(|\xi - \eta| \leq |\eta|\). Hence, \(m_1(\xi-\eta,\eta) \lesssim \frac{|\xi-\eta|}{|\eta|^{1-\beta}} + |\eta|^\beta\). The monomial arising from the derivatives on \(u\) in the integrand of \(A_j\) is \((\xi-\eta)^j \eta^{k-j} \xi^k\). Excluding the \(\xi^k\) factor for a moment, we thus estimate
\begin{align*}
m_1(\xi-\eta,\eta)  |\xi-\eta|^j |\eta|^{k-j}  &\lesssim |\xi - \eta|^{j+1} |\eta|^{k-j+\beta -1}  + |\xi - \eta|^{j} |\eta|^{k-j+\beta} \\
&\lesssim |\xi - \eta|^{k} |\eta|^{k -1} + |\xi - \eta|^{k-1} |\eta|^{k}.
\end{align*}
It is a direct consequence of this estimate that \(|A_j|\) has a \(\|u\|_{H^k}^3\) bound, as
\begin{equation}\label{18.3}
\begin{aligned}
&\big|\iint_{\mathbb{R}^2}m_1(\xi-\eta,\eta)\mathcal{F}(\partial_x^ju)(\xi-\eta)\mathcal{F}(\partial_x^{k-j}u)(\eta)\overline{\mathcal{F}(\partial_x^{k}u)(\xi)}\,\diff\eta \, \diff \xi\big|\\
& \lesssim  \left(\bigr\||D| \partial_x^j u\bigr\|_{L^2_{\xi-\eta}}\bigr\||D|^{\beta-1}  \partial_x^{k-j}u\bigr\|_{H^1_\eta} +\bigr\|\partial_x^j u\bigr\|_{H^1_{\xi - \eta}}\bigr\||D|^\beta\partial_x^{k-j}u\bigr\|_{L^2_\eta} \right) \bigr\|\partial_x^{k}u\bigr\|_{L^2_\xi}\\
&\lesssim\|u\|_{H^{k}}^3,
\end{aligned}
\end{equation} 
and similarly for \(m_1(\eta, \xi-\eta)\). By combining these estimates with \eqref{16} we obtain
\begin{align*}
	|\big(\partial_x^ku, \partial_x^kP(u,u)\big)_2|\lesssim (\|u\|_{H^2}+\|u\|_{H^{k}})\|u\|_{H^{k}}^2, \quad  \ k\geq1,  
\end{align*}
which finishes the proof.
\end{proof}

\section{The Energy  Estimates} \label{sec:energy estimates}
We now develop commutator estimates to bound the modified energy. If one deals with the operator \(P\) and its symbol \(m\) directly, pointwise monomial estimates like the ones in Section~\ref{sec:modified energy} are not possible (in fact, they are untrue if one wants a bound in \(H^k(\R)\)). One therefore has to perform global transformations. On the physical side, a global transformation is integration by parts; as mentioned, this however corresponds to a local equality on the Fourier side. A different global transformation is change of variables, and it is what we will use to tackle the problem. More precisely, we shall at each step break out the terms of our commutator that we can handle with some pointwise estimates on the Fourier side; whatever remains will be transformed via global transformations, whereafter the resulting integrals will be attacked pointwisely again. A caveat is that the symbol \(m\) displays different behaviours at different frequencies, and must be dealt with accordingly. Simultaneously, changes of variables will in general both change the domain of integration and affect the symmetries of the Fourier variables, so we will utilise the exact form of \(m\) as much as possible to divide \(\R^3\) into symmetric domains. In the end we will end up with a final commutator which, via differences, has two orders of gain in the required Fourier variables. 

We shall work our way via a series of smaller results, ultimately proving the following result.

\begin{proposition}\label{proposition:upper bound  estimates} Let \(\alpha \in (-1,1)\), \(\alpha\neq0\).  Then
	\begin{align}\label{19.5}
	\frac{\diff}{\diff t}E^{(k)}(t)\lesssim \|u\|_{H^2} \|u\|_{H^3} \|u\|_{H^k}^2+\|u\|_{H^k}^4, \quad\, k\geq1.
	\end{align}
\end{proposition}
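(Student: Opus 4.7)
The plan is to differentiate $E^{(k)}(t)$ in time, substitute \eqref{eq:fKdV} for $\partial_t u$, and organise the outcome as (a) cubic terms at top derivative order that cancel by the normal-form identity \eqref{5.8}, (b) lower-order cubic and mixed contributions controllable by the pointwise growth bounds \eqref{6.8a}--\eqref{6.8b} on $m$, and (c) a handful of genuinely quartic residues to be closed with the right-hand side of \eqref{19.5}, following the strategy sketched in Figure~\ref{fig:outline}.

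First, differentiating the two summands of $E^{(k)}$, using skew-symmetry of $|D|^\alpha\partial_x$ to kill the pure dispersive term, and using symmetry of $P$ so that $\partial_t P(u,u) = 2 P(\partial_t u, u)$, I arrive at
\begin{align*}
\frac{\diff}{\diff t} E^{(k)}(t) &= -2(\partial_x^k u, \partial_x^k(u\partial_x u))_2 + 2(\partial_x^k \partial_t u, \partial_x^k P(u,u))_2\\
&\quad + 4(\partial_x^k u, \partial_x^k P(\partial_t u, u))_2.
\end{align*}
Substituting $\partial_t u = |D|^\alpha\partial_x u - u\partial_x u$ splits this into three cubic and two quartic contributions. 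After symmetrising in $\xi-\eta \leftrightarrow \eta$, the three cubic integrands combine into a Fourier integrand proportional to
\[
\I\xi^{2k}\hat u(\xi-\eta)\hat u(\eta)\overline{\hat u(\xi)} \cdot \left(-\tfrac{\xi}{2} + m(\xi-\eta,\eta)\bigl[(\xi-\eta)|\xi-\eta|^\alpha + \eta|\eta|^\alpha - \xi|\xi|^\alpha\bigr]\right),
\]
whose bracketed factor vanishes identically by \eqref{5.8}. This is the cancellation built into the normal form. Unlike in the Burgers--Hilbert case $\alpha=-1$, however, the sub-top-order material surviving symmetrisation is not immediately benign in the fractional regime; those low-order cubic remnants I would bound directly by \eqref{6.8a}--\eqref{6.8b} combined with the $L^2\times L^2\times L^\infty$ Hölder arguments already employed in Section~\ref{sec:modified energy}.

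The bulk of the proof concerns the two surviving quartic pieces $(\partial_x^k(u\partial_x u), \partial_x^k P(u,u))_2$ and $(\partial_x^k u, \partial_x^k P(u\partial_x u, u))_2$, which on the Fourier side are trilinear integrals in $(\xi,\eta,\sigma)\in\R^3$. I would dispose of the partial energy $k=1$ separately, and for $k \geq 2$ strip off every contribution in which fewer than $k$ derivatives land on a single factor and bound those directly using the global growth of $m$. Four higher-order terms $F_1, F_2, G_1, G_2$ then remain. The strategy is to decompose the frequency domain: on the region $\mathcal{A}_1$ where at least one of $\xi,\eta,\sigma$ is small, the spare low-frequency derivative allows Proposition~\ref{proposition:multiplier 1} together with $L^2/L^\infty$-estimates to close the bound; on the complementary region $\mathcal{A}_2$ where all three frequencies are large and comparable, derivatives can be shifted onto the low-order factor, reducing the problem --- after relabelling and exploiting a symmetry --- to estimating $F_1 + G_1$ on a narrow positive cone $\mathcal{A}_{2,+}^c$ where $\xi \eqsim \eta \eqsim \sigma \gtrsim 1$.

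\textbf{The main obstacle} is this final cone estimate, since no pointwise bound on $m$ alone is good enough there (negative $\alpha$ near $0$ is the tightest case). Following the outline in Figure~\ref{fig:outline}, the plan is to perform the Fourier-side change of variables $\eta-\sigma \leftrightarrow \xi-\eta$ in $G_1$; this preserves the natural quartic measure but moves the integration domain from $\mathcal{A}_{2,+}^c$ to a nearby set $\mathcal{B}$. One first checks that the symmetric difference $\mathcal{A}_{2,+}^c \triangle \mathcal{B}$ contributes negligibly, and then on the common region $F_1 + G_1$ collapses into a single commutator whose symbol depends only on the small parameters $(\xi-\eta)/\eta$ and $(\sigma-\eta)/\eta$. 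Taylor expanding to second order in these two ratios yields two orders of cancellation --- exactly the number of derivatives one would otherwise be unable to place --- and closes the estimate. The remaining pair $F_2 + G_2$ is then controlled in terms of $F_1 + G_1$ by a further symmetry argument, concluding \eqref{19.5}.
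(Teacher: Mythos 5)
Your proposal follows essentially the same route as the paper: exact cancellation of the cubic terms by the normal-form identity, a Leibniz decomposition of the two quartic pairings into terms $F_j,G_j$, direct pointwise estimates on $m$ for the intermediate terms, the frequency splitting $\mathcal{A}_1/\mathcal{A}_2/\mathcal{A}_{2,+}^c$, and the change of variables swapping $\xi-\eta$ and $\eta-\sigma$ followed by a second-order Taylor expansion in $\mu=\frac{\xi-\eta}{\eta}$, $\nu=\frac{\sigma-\eta}{\eta}$. That is the correct skeleton, and your account of the main commutator (including the need to control the symmetric difference of $\mathcal{A}_{2,+}^c$ and its image $\mathcal{B}$) matches Lemmas~\ref{lemma:sets} and Proposition~\ref{prop:G1 + F1 full}.

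There is, however, one genuine gap: your final sentence, that ``the remaining pair $F_2+G_2$ is controlled in terms of $F_1+G_1$ by a further symmetry argument.'' Of the two endpoint terms $j=k$, only $F_k$ is recovered by symmetry, since $Q$ is symmetric and hence $F_k=F_1$ exactly. The other endpoint, $G_k=\big(Q(u\partial_xu,\partial_x^ku),\partial_x^{k+1}u\big)_2$, is \emph{not} equal to $G_1$ (see \eqref{eq:G_k}) and cannot be reduced to $F_1+G_1$; its symbol $n(\xi-\eta,\eta)(\xi-\eta)\eta^k\xi^{k+1}$ carries $2k+1$ derivatives split $k/(k{+}1)$ across two factors, and since $n$ gains only about one derivative for $\alpha$ near $0$ a direct pointwise bound does not close in $H^k$. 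The paper disposes of it in Lemma~\ref{lemma:G_k} by a separate self-cancellation: writing $u\partial_xu=\tfrac12\partial_x(u^2)$ and splitting $\xi^{k+1}=(\xi-\eta)\xi^k+\eta\,\xi^k$, the second piece reproduces $-G_k$ (by the anti-symmetry of the purely imaginary symbol $n$ and the reality of $u$, the same trick as $A_0+A_0^2=0$ in Lemma~\ref{lemma:lower bound  estimates}), so $2G_k$ is left with an extra factor $(\xi-\eta)^2$ and becomes estimable. This step is essential and is not supplied by, nor equivalent to, the symmetry you invoke. Two smaller imprecisions: the normal-form cancellation of the cubic terms is exact at \emph{all} orders (identity \eqref{23} holds as functions, so there are no ``low-order cubic remnants'' to bound), and derivatives can be transferred to the difference variables precisely on $\mathcal{A}_2$, where the three frequencies are \emph{not} all comparable — the comparable cone is its complement $\mathcal{A}_2^c$, where the commutator is needed.
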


\subsection{Reduction of \(\frac{\diff}{\diff t} E^{(k)}(t)\)} We start from the modified  (partial) energy \eqref{eq:modified k-energy}, eliminating its highest-order term.

\begin{lemma}\label{lemma:F_0 G_0}
Let  \(F_j = \big(Q(\partial_x^ju,\partial_x^{k+1-j}u),\partial_x^k(-u\partial_xu)\big)_2\), and similarly let 
\(G_j = \big(Q(\partial_x^j u,\partial_x^{k-j}(u\partial_x u)),\partial_x^{k+1}u\big)_2\).
Then 
\[
\frac{\diff}{\diff t}E^{(k)}(t) \eqsim \sum_{j=1}^k c_{k,j} (F_j +G_j),
\] 
where \(c_{k,j}\) are binomial coefficients.
\end{lemma}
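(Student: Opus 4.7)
The plan is to differentiate $E^{(k)}(t)$, substitute \eqref{eq:fKdV}, exploit the skew-adjointness of $|D|^\alpha\partial_x$, and then invoke the defining equation \eqref{5.8} for the multiplier $m$ to cancel the cubic Burgers-type contributions. What survives is purely quartic and may be recast into the stated form via a pseudo-product Leibniz rule and one integration by parts.

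Differentiating \eqref{eq:modified k-energy} and using $\partial_tP(u,u)=2P(u_t,u)$ (by the symmetry of $P$), then substituting $u_t=|D|^\alpha\partial_xu-u\partial_xu$ and noting $2(\partial_x^k|D|^\alpha\partial_xu,\partial_x^ku)_2=0$, one arrives at $\frac{\diff}{\diff t}E^{(k)}=\text{(I)}+\text{(II)}$, where
\begin{align*}
\text{(I)}&=-2(\partial_x^ku,\partial_x^k(u\partial_xu))_2+2(\partial_x^k|D|^\alpha\partial_xu,\partial_x^kP(u,u))_2\\
&\quad+4(\partial_x^ku,\partial_x^kP(|D|^\alpha\partial_xu,u))_2
\end{align*}
gathers the cubic contributions and
\begin{align*}
\text{(II)}&=-2(\partial_x^k(u\partial_xu),\partial_x^kP(u,u))_2-4(\partial_x^ku,\partial_x^kP(u\partial_xu,u))_2
\end{align*}
gathers the quartic ones.

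I would then show $\text{(I)}=0$ by passing each term to Fourier variables via \eqref{4.5} and \eqref{5.5} and symmetrizing the last inner product in $\xi-\eta\leftrightarrow\eta$ using the symmetry of $m$. Up to a universal constant, the three trilinear integrals combine into a single one with integrand proportional to
\begin{align*}
\xi^{2k}\Bigl(-\tfrac12\xi+m(\xi-\eta,\eta)\bigl[(\xi-\eta)|\xi-\eta|^\alpha+\eta|\eta|^\alpha-\xi|\xi|^\alpha\bigr]\Bigr)\hat u(-\xi)\hat u(\xi-\eta)\hat u(\eta),
\end{align*}
and the parenthesized factor vanishes identically by \eqref{5.8}. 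This is the energy-level manifestation of the normal-form reduction \eqref{6.2}, and the step in which the precise choice of $m$ is exploited.

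It remains to recast (II) into the stated form. Applying the pseudo-product Leibniz rule $\partial_x^kP(f,g)=\sum_{j=0}^kc_{k,j}P(\partial_x^jf,\partial_x^{k-j}g)$ (immediate on the Fourier side via $\xi^k=\sum_jc_{k,j}(\xi-\eta)^j\eta^{k-j}$) and shifting one derivative across the inner product by integration by parts, each summand in (II) acquires the shape $(Q(\partial_x^ju,\partial_x^{k+1-j}u),\partial_x^k(-u\partial_xu))_2$ or $(Q(\partial_x^ju,\partial_x^{k-j}(u\partial_xu)),\partial_x^{k+1}u)_2$, with $Q$ the pseudo-product naturally produced by the manipulation (built directly from $m$). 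The extreme indices $j=0$ and $j=k+1$ drop out by the skew-symmetry \eqref{6.1} of $m$, just as in the $A_0$-cancellation used in the proof of Lemma \ref{lemma:lower bound  estimates}, and one is left precisely with the sum from $j=1$ to $j=k$ carrying binomial weights $c_{k,j}$. The main obstacle is bookkeeping: tracking signs and Plancherel conventions carefully enough to verify that (I) cancels exactly rather than merely telescoping, and to pin down the precise form of $Q$ that matches the subsequent estimates; once \eqref{5.8} and \eqref{6.1} have been identified as the key mechanisms, the rest is routine.
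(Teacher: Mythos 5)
Your overall route coincides with the paper's: differentiate $E^{(k)}$, use the defining relation \eqref{5.8} (equivalently \eqref{23}) to annihilate the cubic block, then write $P=\partial_x Q$, apply the pseudo-product Leibniz rule and one integration by parts to the surviving quartic block. Your reduction to $\mathrm{(I)}+\mathrm{(II)}$ and the verification $\mathrm{(I)}=0$ are correct, as is the identification of $F_j$ and $G_j$ with binomial weights.

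The one step that carries the actual content of the lemma is, however, mis-described: the removal of the $j=0$ terms. Neither $F_0=\big(Q(u,\partial_x^{k+1}u),\partial_x^k(-u\partial_xu)\big)_2$ nor $G_0=\big(Q(u,\partial_x^k(u\partial_xu)),\partial_x^{k+1}u\big)_2$ vanishes on its own, and an $A_0$-style manipulation applied to either term separately does not make it ``drop out'': in Lemma~\ref{lemma:lower bound  estimates} the identity \eqref{6.1} is used with the weights $\eta$ and $\xi$ that are present in $A_0$'s integrand, and even there $A_0$ is only reduced to $\tfrac12 A_0^1$, not eliminated; for the unweighted symbol $n$ of $Q$ the corresponding identity is the plain anti-symmetry $n(\xi-\eta,\eta)=-n(\eta-\xi,\xi)$, and splitting $\eta^{k+1}=\eta^{k}\big(\xi-(\xi-\eta)\big)$ inside $F_0$ leaves a nonzero top-order remainder because the third factor $\partial_x^k(u\partial_xu)$ already carries $k+1$ derivatives' worth of growth. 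What actually happens is an exact \emph{cross}-cancellation $F_0+G_0=0$ between the two quartic groups: since $u$ is real, the change of variables $\xi\leftrightarrow\eta$ turns $F_0$ into $-\iint\overline{n(\eta-\xi,\xi)}\,\hat u(\xi-\eta)\,\mathcal{F}(\partial_x^k(u\partial_xu))(\eta)\,\overline{\mathcal{F}(\partial_x^{k+1}u)(\xi)}\,\diff\eta\,\diff\xi$, which is exactly $-G_0$ because $n(\xi-\eta,\eta)=\overline{n(\eta-\xi,\xi)}$. This last identity is indeed equivalent to \eqref{6.1} once one writes $m(\xi-\eta,\eta)=\I\xi\, n(\xi-\eta,\eta)$, so the symmetry you invoke is the right one --- it just has to be deployed across the pair $F_0,G_0$, not term by term. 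A smaller indexing point: only $j=0$ is extreme; the $j=k$ terms $F_k$ and $G_k$ must be retained (they are among the hardest terms, treated only later in the paper), and to land on the weights $c_{k,j}$ rather than $c_{k+1,j}$ one should first peel one derivative off $\partial_x^{k+1}Q(u,u)$ by symmetry before expanding.
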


\begin{proof}
From the definition of the modified energy \eqref{eq:modified k-energy}, and the equation \eqref{eq:fKdV}, one calculates
\begin{equation}\label{20}
\begin{aligned}
&\frac{1}{2}\frac{\diff}{\diff t}E^{(k)}(t)\\
&=\big(\partial_x^k\partial_tu,\partial_x^ku)_2+(\partial_x^k\partial_tu, \partial_x^kP(u,u)\big)_2
+\big(\partial_x^ku, \partial_x^k\partial_tP(u,u)\big)_2\\
&=\big(\partial_x^k(|D|^\alpha\partial_xu-u\partial_xu),\partial_x^ku\big)_2+\big(\partial_x^k(|D|^\alpha\partial_xu
-u\partial_xu), \partial_x^kP(u,u)\big)_2\\
&\quad+2\big(\partial_x^ku, \partial_x^kP(|D|^\alpha\partial_x u-u\partial_ x u,u)\big)_2\\
&= -\big(\partial_x^ku,\partial_x^k(u\partial_xu)\big)_2 -\big(\partial_x^ku, \partial_x^k(|D|^\alpha\partial_xP(u,u))\big)_2\\
&\quad +\big(\partial_x^k(-u\partial_xu), \partial_x^kP(u,u)\big)_2 +2\big(\partial_x^ku, \partial_x^kP(|D|^\alpha\partial_x u-u\partial_ x u,u)\big)_2,
\end{aligned}
\end{equation} 
where in the second step we have used the symmetry of \(P\), and in the last integration by parts. Now, the bilinear form \(P\) is constructed exactly to satisfy 
\begin{align}\label{23}
-u\partial_xu-|D|^\alpha\partial_xP(u,u)+P(|D|^\alpha\partial_x u,u)+P(u,|D|^\alpha\partial_x u)=0,
\end{align}
see \eqref{5}. Thus, insertion of  \eqref{23} into \eqref{20} yields
\begin{equation}\label{24}
\begin{aligned}
\frac{1}{2}\frac{\diff}{\diff t}E^{(k)}(t)
&=\big(\partial_x^k(-u\partial_xu), \partial_x^kP(u,u)\big)_2 + 2\big(\partial_x^ku, \partial_x^kP(-u\partial_x u,u)\big)_2.
\end{aligned}
\end{equation}
Note here that all the cubic terms in \eqref{20} have been transferred to quartic ones. In the following we will estimate the separate terms on the right-hand side of \eqref{24}. For notational convenience, let \(P=\partial_x Q\) so that
\begin{equation}\label{24.5}
\begin{aligned}
n(\xi-\eta,\eta) &=\frac{-\I}{2\big[|\xi-\eta|^\alpha(\xi-\eta)+|\eta|^\alpha\eta-|\xi|^\alpha\xi\big]},
\end{aligned}
\end{equation}
is the symbol of \(Q\), symmetric in its two arguments (just as \(P\) and \(m\) are). We now decompose \eqref{24} into its highest-order and remainder terms, as
\begin{equation}\label{24.8}
\begin{aligned}
&\big(\partial_x^kP(u,u), \partial_x^k(-u\partial_xu)\big)_2\\
&=\big(\partial_x^{k+1}Q(u,u), \partial_x^k(-u\partial_xu)\big)_2
=2\big(\partial_x^kQ(u,\partial_x u), \partial_x^k(-u\partial_xu)\big)_2\\
&=2\underbrace{\big(Q(u,\partial_x^{k+1}u), \partial_x^k(-u\partial_xu)\big)_2}_{F_0} +
2\sum_{j=1}^k c_{k,j} \underbrace{\big(Q(\partial_x^ju,\partial_x^{k+1-j}u),\partial_x^k(-u\partial_xu)\big)_2}_{F_j},
\end{aligned}
\end{equation}
and, using integration by parts,
\begin{equation}\label{24.9}
\begin{aligned}
&2\big(\partial_x^kP(u,-u\partial_x u),\partial_x^ku\big)_2
=2\big(\partial_x^kQ(u,u\partial_x u),\partial_x^{k+1}u\big)_2\\
&=2\underbrace{\big(Q(u,\partial_x^k(u\partial_x u)),\partial_x^{k+1}u\big)_2}_{G_0}
+2\sum_{j=1}^k c_{k,j} \underbrace{\big(Q(\partial_x^j u,\partial_x^{k-j}(u\partial_x u)),\partial_x^{k+1}u\big)_2}_{G_j}.\\
\end{aligned}
\end{equation}
The solution \(u\)  being real, we have \(\overline{\hat u} = \hat u(-\cdot)\), so the change of variables \(\xi \leftrightarrow \eta\) in combination with basic manipulations shows that 
\begin{align*}
F_0
=-\iint_{\R^2}\overline{n(\eta-\xi,\xi)}\hat{u}(\xi-\eta)\mathcal{F}(\partial_x^k(u\partial_xu))(\eta)
\overline{\mathcal{F}(\partial_x^{k+1}u)(\xi)}\, \diff\eta \diff\xi.
\end{align*}
Because \(n(\xi-\eta,\eta)=\overline{n(\eta-\xi,\xi)}\), we therefore have
\begin{align*}
F_0+G_0
&=\iint_{\R^2}\big[n(\xi-\eta,\eta)-\overline{n(\eta-\xi,\xi)}\big]\hat{u}(\xi-\eta)\\
&\quad \times \mathcal{F}(\partial_x^k(u\partial_xu))(\eta)
\overline{\mathcal{F}(\partial_x^{k+1}u)(\xi)}\, \diff\eta \diff\xi\\
&=0,
\end{align*}
and the result follows.
\end{proof}  

Although the highest-order terms \(F_0\) and \(G_0\) in \(\frac{\diff}{\diff t} E^{(k)}(t)\) completely cancel each other out, there are still the high-order terms \(F_j, G_j\) for \(j \in \{1, k\}\), which in three cases are too high to be estimated separately. The subsequent subsections are devoted to the terms  \(F_j\)  and \(G_j\), \(j=1,2,\ldots, k-1\), where we will have to divide the proof into two cases because of the different behaviour of \(m\) and \(n\) for positive and negative values of \(\alpha\). But we first make away with the case \(k = 1\), and separate further low-order terms out of \(F_1\), \(G_1\) in the case \(k \geq 2\).

\begin{lemma}\label{lemma:k=1}
When \(k=1\),  
\begin{align*}
\frac{\diff}{\diff t}E^{(k)}(t) \lesssim\|u\|_{H^1}^2\|u\|_{H^2}^2.
\end{align*}
\end{lemma}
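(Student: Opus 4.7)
By Lemma~\ref{lemma:F_0 G_0} applied with $k=1$, we have $\frac{\diff}{\diff t}E^{(1)}(t) \eqsim F_1 + G_1$, so it will suffice to show $|F_1| + |G_1| \lesssim \|u\|_{H^1}^2\|u\|_{H^2}^2$. The plan is to integrate by parts once in each term to reduce them to pseudo-product forms involving $P = \partial_x Q$, and then to estimate each by Cauchy--Schwarz combined with the Fourier-side pointwise bounds of Proposition~\ref{proposition:multiplier 1}, Young's convolution inequality, and standard one-dimensional Sobolev/Moser estimates.

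Using $(Q(f,g), \partial_x h)_2 = -(P(f,g), h)_2$ (integration by parts together with $\partial_x Q = P$), I would first rewrite
\[
F_1 = \bigl(P(\partial_x u, \partial_x u), u\partial_x u\bigr)_2, \qquad G_1 = -\bigl(P(\partial_x u, u\partial_x u), \partial_x u\bigr)_2.
\]
For $F_1$, Cauchy--Schwarz together with $\|u\partial_x u\|_{L^2} \lesssim \|u\|_{L^\infty}\|u\|_{H^1} \lesssim \|u\|_{H^1}^2$ (via $H^1 \hookrightarrow L^\infty$ in one dimension) reduces matters to controlling $\|P(\partial_x u, \partial_x u)\|_{L^2}$. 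By Proposition~\ref{proposition:multiplier 1},
\[
|m(\xi-\eta,\eta)||\xi-\eta||\eta| \lesssim |\xi-\eta|^{2-\alpha} + |\eta|^{2-\alpha} \qquad (\alpha \in (0,1)),
\]
whereas for $\alpha = -\beta \in (-1, 0)$ the analogous bound reads $|\xi|^\beta(|\xi-\eta|^{2-\beta}|\eta|^\beta + |\xi-\eta|^\beta|\eta|^{2-\beta})$, which after applying the elementary subadditivity $|\xi|^\beta \lesssim |\xi-\eta|^\beta + |\eta|^\beta$ (valid for $\beta \in (0, 1)$) expands into a finite sum of monomials $|\xi-\eta|^a|\eta|^b$ with $a+b = 2-\alpha$ and $a,b \geq 0$. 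For each such monomial I would apply Young's convolution inequality in the form $L^2 \ast L^1 \to L^2$, with the direction of the split chosen depending on $\beta$ so that the weighted $L^1$-norm $\|\langle\cdot\rangle^\gamma \hat u\|_{L^1}$ (controlled by $\|u\|_{H^{\gamma + 1/2 + \epsilon}}$ via Cauchy--Schwarz) lies below $\|u\|_{H^2}$. This yields $\|P(\partial_x u, \partial_x u)\|_{L^2} \lesssim \|u\|_{H^2}^2$, and hence $|F_1| \lesssim \|u\|_{H^1}^2\|u\|_{H^2}^2$.

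For $G_1$, Cauchy--Schwarz reduces matters to $\|P(\partial_x u, u\partial_x u)\|_{L^2} \lesssim \|u\|_{H^1}\|u\|_{H^2}^2$. Using $u\partial_x u = \tfrac{1}{2}\partial_x(u^2)$, the $|\eta|$-factor extracted from this $\partial_x$ plays the same role on the Fourier side as the $|\eta|$ from $\partial_x u$ did in $F_1$, so the pointwise symbol bound reduces to the same sum of monomials as above. The second convolution factor is now $|\widehat{u^2}(\eta)|$ rather than $|\hat u(\eta)|$, and I would close the estimate using the one-dimensional Moser product estimate $\|u^2\|_{H^s} \lesssim \|u\|_{L^\infty}\|u\|_{H^s} \lesssim \|u\|_{H^1}\|u\|_{H^s}$ ($s \geq 0$) to control the $|\widehat{u^2}|$-norms; this produces the extra $\|u\|_{H^1}$ factor that, combined with the final $\|u\|_{H^1}$ from Cauchy--Schwarz, gives $|G_1| \lesssim \|u\|_{H^1}^2\|u\|_{H^2}^2$.

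The main obstacle is that $m$ is not a Coifman--Meyer multiplier: it has singularities along all three lines $\{\xi=0\}$, $\{\eta=0\}$, $\{\xi-\eta=0\}$, with qualitatively different behaviour for positive and negative $\alpha$. The key observation that makes the plan work is that after the integration by parts, the two interior derivative factors $(\xi-\eta)$ and $\eta$ of $P$ (coming from $\partial_x u$ and from either $\partial_x u$ or $\partial_x(u^2)$) precisely cancel the two low-frequency singularities of $m$ in those variables, so that the remaining growth is controlled by Proposition~\ref{proposition:multiplier 1} and is amenable to Young's inequality. In the negative-$\alpha$ case, the delicate point will be distributing the residual high-frequency factor $|\xi|^{-\alpha}$ between the two convolution slots without forcing a $\|u\|_{H^{2+\epsilon}}$ factor in the Young bound; this is resolved by choosing the direction of Young's inequality ($L^2\ast L^1$ vs $L^1\ast L^2$) depending on the size of $\beta = -\alpha$.
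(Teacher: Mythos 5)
Your proposal is correct and follows essentially the same route as the paper: both reduce $\frac{\diff}{\diff t}E^{(1)}$ to $F_1+G_1$ via Lemma~\ref{lemma:F_0 G_0}, bound the symbol pointwise using Proposition~\ref{proposition:multiplier 1} (with the interior derivative factors cancelling the low-frequency singularities), and close with $L^2\ast L^1$ Young-type estimates on the Fourier side together with the embedding $\|\hat f\|_{L^1}\lesssim\|f\|_{H^{1/2+\epsilon}}$ and the product estimate for $u^2$. The only differences are cosmetic: you integrate by parts to work with $P$ and $m$ where the paper keeps $Q$ and its symbol $n$ with the outer derivative, and you distribute the $|\xi|^{\beta}$ factor by subadditivity (correctly noting that the direction of Young's inequality must depend on $\beta$) where the paper groups the terms slightly differently.
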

\begin{proof}
When \(k=1\) there are only  \(F_1\) in \eqref{24.8} and  \(G_1\) in \eqref{24.9} to deal with. From the definition of \(n\) and the estimate \eqref{6.8a} one has  for \(\alpha \in (0,1)\) that
\begin{align*}
|n(\xi-\eta,\eta)|
&\lesssim\frac{1}{|\xi|}\left(\frac{|\xi-\eta|^{1-\alpha}}{|\eta|}+\frac{|\xi-\eta|^{1-\alpha}+|\xi|^{1-\alpha}}{|\xi-\eta|}\right)\\
&=\frac{|\xi-\eta|^{1-\alpha}}{|\xi||\eta|}+\frac{1}{|\xi||\xi-\eta|^\alpha}+\frac{1}{|\xi|^\alpha|\xi-\eta|},
\end{align*}
and for \(\beta=-\alpha \in (0,1)\) that
\begin{align*}
|n(\xi-\eta,\eta)|
&\lesssim\frac{1}{|\xi|^{1-\beta}}\left(\frac{|\xi-\eta|^{1-\beta}}{|\eta|^{1-\beta}}+\frac{|\xi-\eta|^{1-\beta}+|\xi|^{1-\beta}}{|\xi-\eta|^{1-\beta}}\right)\\
&=\frac{|\xi-\eta|^{1-\beta}}{|\xi|^{1-\beta}|\eta|^{1-\beta}}+\frac{1}{|\xi|^{1-\beta}}
+\frac{1}{|\xi-\eta|^{1-\beta}}\\
&\lesssim \frac{1}{|\xi|^{1-\beta}}+\frac{1}{|\eta|^{1-\beta}}
+\frac{1}{|\xi-\eta|^{1-\beta}}.
\end{align*}
Since
\(
F_1 = \big(Q(\partial_x u,\partial_x u),\partial_x (-u\partial_xu)\big)_2
\)
we have for \(\alpha \in (0,1)\) that
\begin{align*}
|F_1|
&\lesssim\bigr\||D|^{1-\alpha} \partial_x u \bigr\|_{L_{\xi-\eta}^2}\bigr\||D|^{-1} \partial_x u\bigr\|_{H_\eta^1} \bigr\| |D|^{-1} \partial_x (u\partial_xu) \bigr\|_{L_\xi^2}\\
&\quad +\bigr\||D|^{-\alpha} \partial_x u \bigr\|_{L_{\xi-\eta}^2}\bigr\| \partial_x u\bigr\|_{H_\eta^1} \bigr\| |D|^{-1} \partial_x (u\partial_xu) \bigr\|_{L_\xi^2}\\
&\quad +\bigr\||D|^{-1} \partial_x u \bigr\|_{L_{\xi-\eta}^2}\bigr\| \partial_x u\bigr\|_{H_\eta^1} \bigr\| |D|^{-\alpha} \partial_x (u\partial_xu) \bigr\|_{L_\xi^2}\\
&\lesssim \|u\|_{H^1}^2 \|u\|_{H^2}^2, 
\end{align*}
and likewise for \(\beta = -\alpha \in (0,1)\): 
\begin{align*}
|F_1|
&\lesssim\bigr\|\partial_x u \bigr\|_{H_{\xi-\eta}^1}\bigr\|\partial_x u\bigr\|_{L_\eta^2} \bigr\| |D|^{\beta -1} \partial_x (u\partial_xu) \bigr\|_{L_\xi^2}\\
&\quad +\bigr\| \partial_x u \bigr\|_{H_{\xi-\eta}^1}\bigr\||D|^{\beta-1}\partial_x u\bigr\|_{L_\eta^2} \bigr\|  \partial_x (u\partial_xu) \bigr\|_{L_\xi^2}\\
&\quad +\bigr\||D|^{\beta -1}\partial_x u \bigr\|_{L_{\xi-\eta}^2}\bigr\| \partial_x u\bigr\|_{H_\eta^1} \bigr\|  \partial_x (u\partial_xu) \bigr\|_{L_\xi^2}\\
&\lesssim \|u\|_{H^1}^2 \|u\|_{H^2}^2.
\end{align*}
For 
\(
G_1 = \big(Q(\partial_x u,(u\partial_x u)),\partial_x^{2}u\big)_2 = {\textstyle \frac{1}{2}} \big(Q(\partial_x u, \partial_x u^2 ),\partial_x^{2}u\big)_2,
\)
the corresponding calculations give
\begin{align*}
|G_1|
&\lesssim\bigr\||D|^{1-\alpha} \partial_x u\bigr\|_{L_{\xi-\eta}^2}\bigr\||D|^{-1}  \partial_x u^2\bigr\|_{H_\eta^1}\bigr\| |D|^{-1}\partial_x^{2}u\bigr\|_{L_\xi^2}\\
&\quad +\bigr\||D|^{-\alpha} \partial_x u\bigr\|_{L_{\xi-\eta}^2}\bigr\| \partial_x u^2 \bigr\|_{H_\eta^1}\bigr\| |D|^{-1}\partial_x^{2}u\bigr\|_{L_\xi^2}\\
&\quad +\bigr\||D|^{-1} \partial_x u\bigr\|_{L_{\xi-\eta}^2}\bigr\| \partial_x u^2 \bigr\|_{H_\eta^1}\bigr\| |D|^{-\alpha}\partial_x^{2}u\bigr\|_{L_\xi^2}\\
&\lesssim  \|u\|_{H^1}^2 \|u\|_{H^2}^2, 
\end{align*}
and
\begin{align*}
|G_1|
&\lesssim\bigr\| \partial_x u\bigr\|_{L_{\xi-\eta}^2}\bigr\|  \partial_x u^2 \bigr\|_{H_\eta^1}\bigr\| |D|^{\beta-1}\partial_x^{2}u\bigr\|_{L_\xi^2}\\
&\quad +\bigr\|\partial_x u\bigr\|_{L_{\xi-\eta}^2}\bigr\||D|^{\beta-1}  \partial_x u^2 \bigr\|_{H_\eta^1}\bigr\| \partial_x^{2}u\bigr\|_{L_\xi^2}\\
&\quad +\bigr\||D|^{\beta-1} \partial_x u\bigr\|_{L_{\xi-\eta}^2}\bigr\| \partial_x u^2 \bigr\|_{H_\eta^1}\bigr\| \partial_x^{2}u\bigr\|_{L_\xi^2}\\
&\lesssim   \|u\|_{H^1}^2 \|u\|_{H^2}^2.
\end{align*}
All taken together, 
\begin{align*}
|F_1| + |G_1|\lesssim\|u\|_{H^1}^2\|u\|_{H^2}^2.
\end{align*}
Since \(\frac{1}{2}\frac{\diff}{\diff t}E^{(1)}(t) = 2(F_0 + G_0) + 2(F_1 + G_1)\), with \(F_0 + G_0 = 0\), the proposition follows.
\end{proof}

When \(k\geq2\), we employ commutator estimates to handle the  terms \(F_1\), \(G_1\), \(F_k\) and \(G_k\). We first extract the lower-order parts of these terms.
Using integration by parts  we decompose \(F_1\) as 
\begin{equation}\label{eq:def F_{1,0}}
\begin{aligned}
F_1&=-\big(Q(\partial_x u,\partial_x^{k}u),\partial_x^k(u\partial_xu)\big)_2
=\big(P(\partial_x u,\partial_x^{k}u),\partial_x^{k-1}(u\partial_xu)\big)_2\\
&=\underbrace{\big(uP(\partial_x u,\partial_x^{k}u),\partial_x^{k}u\big)_2}_{F_{1,0}}
+\sum_{l=1}^{k-1}c_{k-1,l} \underbrace{\big(P(\partial_x u,\partial_x^{k}u),\partial_x^lu\partial_x^{k-l}u\big)_2}_{F_{1,l}}.
\end{aligned}
\end{equation}
To eliminate the low-frequency singularities, we decompose \(G_1\) instead in the following manner: 
\begin{equation}\label{eq:def G_{1,0}}
\begin{aligned}
G_1&=\big(Q(\partial_x u,\partial_x^{k-1}(u\partial_x u)),\partial_x^{k+1}u\big)_2
=\big(P(\partial_x u,\partial_x^{k-1}(u\partial_x u)),-\partial_x^{k}u\big)_2\\
&=\underbrace{\big(P(\partial_x u,\partial_x (u\partial_x^{k-1}u)),-\partial_x^{k}u\big)_2}_{G_{1,0}} + 
\sum_{l=1}^{k-2}c_{k-2,l} \underbrace{\big(P(\partial_x u,\partial_x (\partial_x^lu\partial_x^{k-l-1}u)),-\partial_x^{k}u\big)_2}
_{G_{1,l}}.
\end{aligned}
\end{equation}
By the symmetry of \(P\) and \(Q\), we also have
\[
F_1=F_k,
\]
whence it is natural to define \(F_{k,l} := F_{1,l}\). Note, however, that
\begin{equation}\label{eq:G_k}
G_1 \neq G_k=\big(Q(u\partial_x u,\partial_x^ku),\partial_x^{k+1}u\big)_2.
\end{equation}
It follows from the following lemma that the difficult terms left to treat are \(F_{1,0}\), \(G_{1,0}\) and \(G_k\).

\begin{lemma}\label{lemma:ddt sim I} Let  \(k \geq 2\). Then, in the language of \eqref{eq:def F_{1,0}}--\eqref{eq:G_k},
\[
\frac{\diff}{\diff t} E^{(k)}(t) \eqsim (F_{1,0} + G_{1,0}) + (F_{k,0} + G_k) + R,
\]
with \(R \lesssim \| u \|_{H^k}^4\).
\end{lemma}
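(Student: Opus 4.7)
The plan is to start from the representation
\[
\frac{\diff}{\diff t} E^{(k)}(t) \eqsim \sum_{j=1}^{k} c_{k,j}(F_j+G_j)
\]
provided by Lemma~\ref{lemma:F_0 G_0}, and to split the sum into (i) the extremal contributions with $j\in\{1,k\}$ and (ii) the middle contributions with $j\in\{2,\ldots,k-1\}$. The decompositions \eqref{eq:def F_{1,0}} and \eqref{eq:def G_{1,0}} isolate, inside $F_1$ and $G_1$, the high-order pieces $F_{1,0}$ and $G_{1,0}$ together with strictly lower-order remainders $F_{1,l}$ and $G_{1,l}$. The symmetry $F_1 = F_k$ provides the same building blocks inside $F_k$, yielding in particular $F_{k,0} = F_{1,0}$, while $G_k$ is kept as in \eqref{eq:G_k} since it is not amenable to this decomposition. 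This bookkeeping produces the four distinguished quantities in the statement, and reduces the proof to showing that the remainders --- the middle sums $F_j + G_j$ and the lower-order pieces $F_{1,l}, G_{1,l}$ with $l \ge 1$ --- are all quartically bounded in $\|u\|_{H^k}$.

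For the middle terms with $2 \le j \le k-1$, both inputs to $P$ (or $Q$) carry at least one derivative, so on the Fourier side the symbol is multiplied by a product of the form $(\xi-\eta)\eta$ or higher, absorbing the low-frequency singularities of $m$ predicted by Proposition~\ref{proposition:multiplier 1}. The remaining pointwise bounds on $m$ can then be treated exactly as in \eqref{15.5} and \eqref{18}: dominate $|m|$ by a sum of monomials, apply a Hölder estimate of $L^2 \times L^2 \times L^\infty$ type on the Fourier side, and use $H^2 \hookrightarrow L^\infty$ on the factor with the fewest derivatives. Since $N \ge 3$ and $1 < j < k$, the derivative counts on each slot stay within the range allowed by $\|u\|_{H^k}$, producing a final bound of $\|u\|_{H^k}^4$.

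The lower-order remainders $F_{1,l}$ ($1 \le l \le k-1$) and $G_{1,l}$ ($1 \le l \le k-2$) are treated analogously. In each of them, the symbol of $P$ is again multiplied by a product of derivative factors that cancels its low-frequency singularities, while the remaining derivatives are distributed across a product --- in the third slot for $F_{1,l}$, or inside the second argument of $P$ (after applying the Leibniz rule) for $G_{1,l}$ --- in such a way that one factor can always be placed in $L^\infty$ via $H^2 \hookrightarrow L^\infty$ while the others absorb the full $H^k$-norm. In the case $\alpha < 0$, the extra factor $|\xi|^\beta$ in \eqref{6.8b} is handled by the symmetrisation embodied in \eqref{eq:estimate eta < xi - eta}--\eqref{eq:estimate eta > xi - eta}, after which the same $L^2 \times L^2 \times L^\infty$ strategy applies.

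The main obstacle is to keep the derivative counts below $N$ simultaneously while accommodating the worst bound \eqref{6.8b}, in which $m$ carries an additional $|\xi|^\beta$ at high frequency for $\alpha < 0$. The symmetrisation of that bound in $\xi-\eta$ and $\eta$ is the key algebraic step, just as in the proof of the preceding lemma. The four persistent terms $F_{1,0}, G_{1,0}, F_{k,0}$ and $G_k$ escape this treatment because in each of them the full derivative budget is concentrated on a single factor --- $\partial_x^k u$ in the output of $F_j$, or $\partial_x^{k+1} u$ in the inner product for $G_j$ --- with no product rule available to redistribute. They are exactly the terms handled by the subsequent lemmas of the section.
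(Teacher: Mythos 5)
Your proposal is correct and follows essentially the same route as the paper: reduce via Lemma~\ref{lemma:F_0 G_0}, isolate \(F_{1,0}\), \(G_{1,0}\), \(F_{k,0}=F_{1,0}\) and \(G_k\), and bound the middle terms \(F_j,G_j\) (\(2\le j\le k-1\)) together with the remainders \(F_{1,l}\), \(G_{1,l}\) by \(\|u\|_{H^k}^4\) using the pointwise symbol bounds and \(L^2\times L^2\times L^1\) (Fourier-side) H\"older estimates. The only detail worth making explicit is the preliminary integration by parts converting \(Q\) back to \(P\), e.g.\ \(F_j=(P(\partial_x^ju,\partial_x^{k+1-j}u),\partial_x^{k-1}(u\partial_xu))_2\), which keeps the derivative count on the \(u\partial_xu\) factor at \(k\) and justifies invoking the bounds \eqref{15.5} and \eqref{18} for \(m\) rather than \(n\).
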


\begin{proof}
We estimate the terms \(F_j\), \(G_j\) for \(j=2,\ldots,k-1\), \(F_{1,l} = F_{k,l}\) for \(l=1,\ldots,k-1\), and \(G_{1,l}\) for \(l=1,\ldots,k-2\). The estimates follow the lines of those in the proof of Lemma~\ref{lemma:k=1}, so we give the details only for \(F_j\) and \(G_j\). 

Integrating by parts, one has \(F_j=(P(\partial_x^ju,\partial_x^{k+1-j}u),\partial_x^{k-1}(u\partial_xu))_2\)
and \(G_j=-(P(\partial_x^ju,\partial_x^{k-j}(u\partial_x u)),\partial_x^ku )_2\). In view of \eqref{6.8a}, we can estimate 
\begin{equation}\label{25}
\begin{aligned}
|F_j|
&\lesssim\bigr\||D|^{1-\alpha} \partial_x^j u\bigr\|_{L_{\xi-\eta}^2}\bigr\||D|^{-1} \partial_x^{k+1-j}u\bigr\|_{H_\eta^1}
\bigr\| \partial_x^{k} u^2 \bigr\|_{L_\xi^2}\\
&\quad+\bigr\||D|^{-1} \partial_x^j u\bigr\|_{H_{\xi-\eta}^1}\bigr\||D|^{1-\alpha} \partial_x^{k+1-j}u\bigr\|_{L_\eta^2}\bigr\|  \partial_x^{k} u^2 \bigr\|_{L_\xi^2}\\
&\lesssim \|u\|_{H^{j+1-\alpha}}\|u\|_{H^{k+1-j}}\|u\|_{H^{k}}^2
+\|u\|_{H^j}\|u\|_{H^{k+2-j-\alpha}}\|u\|_{H^{k}}^2,
\end{aligned}
\end{equation}
when \(2 \leq j \leq k-1\) and \(\alpha \in (0,1)\). Similarly,
\begin{equation}\label{28}
\begin{aligned}
|G_j|
&\lesssim\bigr\||D|^{1-\alpha} \partial_x^ju \bigr\|_{L_{\xi-\eta}^2}\bigr\||D|^{-1} \partial_x^{k-j+1} u^2 \bigr\|_{H_\eta^1}
\bigr\| \partial_x^{k}u \bigr\|_{L_\xi^2}\\
&\quad+\bigr\||D|^{-1} \partial_x^j u \bigr\|_{H_{\xi-\eta}^1}\bigr\||D|^{1-\alpha} \partial_x^{k-j+1} u^2 \bigr\|_{L_\eta^2}
\bigr\| \partial_x^{k}u \bigr\|_{L_\xi^2}\\
&\lesssim \|u\|_{H^{j+1-\alpha}}\|u\|_{H^{k+1-j}}^2\|u\|_{H^{k}}
+\|u\|_{H^j}\|u\|_{H^{k+2-j-\alpha}}^2\|u\|_{H^{k}},
\end{aligned}
\end{equation}
for the same range of parameters. When \(\beta=-\alpha \in (0,1)\), it instead follows from  \eqref{18}--\eqref{eq:estimate eta > xi - eta}, that 
\begin{equation}\label{54}
\begin{aligned}
|F_j|
&\lesssim\bigr\||D| \partial_x^j u\bigr\|_{L_{\xi-\eta}^2}\bigr\||D|^{\beta-1} \partial_x^{k+1-j} u\bigr\|_{H_\eta^1}
\bigr\| \partial_x^{k} u^2 \bigr\|_{L_\xi^2}\\
&\quad+\bigr\||D|^\beta \partial_x^j u \bigr\|_{L_{\xi-\eta}^2} \bigr\| \partial_x^{k+1-j} u \bigr\|_{H_\eta^1}
\bigr\| \partial_x^{k} u^2 \bigr\|_{L_\xi^2}\\
&\quad+\bigr\| \partial_x^j u\bigr\|_{H_{\xi-\eta}^1}\bigr\||D|^\beta \partial_x^{k+1-j} u \bigr\|_{L_\eta^2}
\bigr\| \partial_x^{k} u^2 \bigr\|_{L_\xi^2}\\
&\quad+\bigr\||D|^{\beta-1} \partial_x^j u\bigr\|_{H_{\xi-\eta}^1} \bigr\| |D| \partial_x^{k+1-j} u \bigr\|_{L_\eta^2}
\bigr\| \partial_x^{k} u^2 \bigr\|_{L_\xi^2}\\
&\lesssim \|u\|_{H^{j+1}}\|u\|_{H^{k+1-j+\beta}}\|u\|_{H^{k}}^2+
\|u\|_{H^{j+\beta}}\|u\|_{H^{k+2-j}}\|u\|_{H^{k}}^2,
\end{aligned}
\end{equation}
and
\begin{equation}\label{56}
\begin{aligned}
|G_j|
&\lesssim\bigr\| |D| \partial_x^j u \bigr\|_{L_{\xi-\eta}^2} \bigr\||D|^{\beta-1} \partial_x^{k-j+1} u^2 \bigr\|_{H_\eta^1} \bigr\| \partial_x^{k}u\bigr\|_{L_\xi^2}\\
&\quad+\bigr\||D|^\beta \partial_x^j u\bigr\|_{L_{\xi-\eta}^2} \bigr\| \partial_x^{k-j+1} u^2 \bigr\|_{H_\eta^1} \bigr\| \partial_x^{k} u\bigr\|_{L_\xi^2}\\
&\quad+\bigr\| \partial_x^j u \bigr\|_{H_{\xi-\eta}^1} \bigr\||D|^\beta \partial_x^{k-j+1} u^2\bigr\|_{L_\eta^2}
\bigr\| \partial_x^{k} u \bigr\|_{L_\xi^2}\\
&\quad+\bigr\||D|^{\beta-1} \partial_x^j u \bigr\|_{H_{\xi-\eta}^1} \bigr\||D| \partial_x^{k-j+1} u^2 \bigr\|_{L_\eta^2} \bigr\| \partial_x^{k} u\bigr\|_{L_\xi^2}\\
&\lesssim \|u\|_{H^{j+1}}\|u\|_{H^{k+1-j+\beta}}^2\|u\|_{H^{k}}+
\|u\|_{H^{j+\beta}}\|u\|_{H^{k+2-j}}^2\|u\|_{H^{k}}.
\end{aligned}
\end{equation}
All of these terms may be estimated by \(\|u\|_{H^{k}}^4\). In view of \eqref{6.8a}--\eqref{6.8b} and \eqref{18}--\eqref{eq:estimate eta > xi - eta} one can estimate, in the same fashion as \eqref{25}--\eqref{56}, the terms \(F_{1,l}\) and \(G_{1,l}\) to show that
\begin{align}\label{59}
|\sum_{l=1}^{k-1}F_{1,l} |+|\sum_{l=1}^{k-2}G_{1,l} |
\lesssim \|u\|_{H^{k}}^4.
\end{align}
Since Lemma~\ref{lemma:F_0 G_0} holds that \(\frac{\diff}{\diff t}E^{(k)}(t) \eqsim \sum_{j=1}^k c_{k,j} (F_j +G_j)\), this completes the proof.
\end{proof}

\subsection{Higher-order estimates: \(F_{1,0} + G_{1,0}\)}\label{subsec:F10 + G10}
In this subsection we will deal with the term \(F_{1,0} + G_{1,0}\).
Set
\begin{align*}
\mathcal{A}_1:=\{(\xi,\eta,\sigma)\in\R^3:\min\{|\xi|,|\eta|,|\sigma|\}< 1\}.
\end{align*}
We first estimate the integrals \(F_{1,0}\) and \(G_{1,0}\)  (separately) when restricted to the domain of integration \(\mathcal{A}_1\). With slight abuse of notation, we let \(\mathcal{A}_1 G_{1,0}\) denote this restriction of the integral, and similarly for other integrals to come. 

\begin{lemma}\label{lemma:A_1 estimates}
The low-frequency integrals \(\mathcal{A}_1 F_{1,0}\) and \(\mathcal{A}_1 G_{1,0}\) satisfy
\[
| \mathcal{A}_1 F_{1,0} | + | \mathcal{A}_1 G_{1,0} | \lesssim \|u\|_{H^2}^2\|u\|_{H^k}^2.
\]
\end{lemma}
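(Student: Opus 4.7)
The plan is to exploit the fact that in $\mathcal A_1$ at least one of the three frequencies $\xi,\eta,\sigma$ lies in the bounded strip $\{|\cdot|<1\}$, giving us enough room to absorb the singularities and high-frequency growth of the symbol $m$ without sacrificing any top-order derivatives. More concretely, I would first use Plancherel and the pseudo-product formula \eqref{4} to write $\mathcal A_1 F_{1,0}$ and $\mathcal A_1 G_{1,0}$ as triple Fourier integrals in $(\xi,\eta,\sigma)\in\R^3$, restricted to $\mathcal A_1$, and then split $\mathcal A_1$ into the three strips $\{|\xi|<1\}$, $\{|\eta|<1\}$, $\{|\sigma|<1\}$, treating each one separately.

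On each such strip, I would apply the pointwise bound on $m(\xi-\eta,\eta)$ from Proposition~\ref{proposition:multiplier 1}---using \eqref{6.8a} when $\alpha\in(0,1)$ and \eqref{6.8b} when $\alpha\in(-1,0)$---together with the derivative monomials $(\I\eta)^k$ and $\I(\xi-\eta)$ arising from $\partial_x^k u$ and $\partial_x u$. The low-frequency singularities $|\eta|^{-1}$ and $|\xi-\eta|^{-1}$ of $m$ are then cancelled by these monomials since $k\geq 2$, exactly as in the proof of Lemma~\ref{lemma:lower bound  estimates}. The bounded-frequency variable itself is handled by Cauchy--Schwarz on the corresponding strip, which produces the factor $\|u\|_{L^2}\leq \|u\|_{H^2}$ from the associated instance of $\hat u$, while the remaining three factors of $\hat u$ are estimated in $L^2$ to give a cubic bound of the type $\|u\|_{H^2}\|u\|_{H^k}^2$ exactly as in \eqref{12}--\eqref{18.3}. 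Multiplying the two contributions yields the desired quartic bound on each strip. The argument for $G_{1,0}$ is parallel: the extra convolution structure of $\partial_x(u\partial_x^{k-1}u)$ merely adds another integration variable on the Fourier side, and the single $\partial_x$ it carries still suffices to absorb the $|\eta|^{-1}$ singularity of $m$.

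The main obstacle is the case $\alpha\in(-1,0)$, where the bound \eqref{6.8b} carries an extra high-frequency factor $|\xi|^\beta$ that must be distributed symmetrically between the variables $\xi-\eta$ and $\eta$ via the trick \eqref{eq:estimate eta < xi - eta}--\eqref{eq:estimate eta > xi - eta} before the monomial cancellation can be carried out. Once this redistribution is done the same three-strip decomposition closes the estimate, and summing over the three strips and the two sub-cases $\alpha \in (0,1)$, $\alpha \in (-1,0)$ yields the desired bound $|\mathcal A_1 F_{1,0}|+|\mathcal A_1 G_{1,0}|\lesssim \|u\|_{H^2}^2\|u\|_{H^k}^2$.
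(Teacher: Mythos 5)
There is a genuine gap, and it concerns what the restriction to $\mathcal A_1$ is actually for. The integrals $F_{1,0}$ and $G_{1,0}$ are \emph{quartic}: written against the measure $\diff Q(\hat u)$ of \eqref{eq:dQ}, the four occurrences of $\hat u$ carry the arguments $\xi-\eta$, $\eta-\sigma$, $\sigma$ and $\xi$, while the frequency $\eta$ appears \emph{only inside the symbol} $m(\xi-\eta,\eta)\,\eta\,(\xi-\eta)$ and is attached to no function. Your plan to ``handle the bounded-frequency variable by Cauchy--Schwarz on the corresponding strip, producing $\|u\|_{L^2}$ from the associated instance of $\hat u$'' therefore breaks down on the strip $\{|\eta|<1\}$, where there is no associated instance of $\hat u$; and on the strips $\{|\xi|<1\}$, $\{|\sigma|<1\}$ it leaves untouched the genuinely problematic feature, namely the \emph{growth} of the symbol in $\eta$ at high frequency: $|m(\xi-\eta,\eta)\eta(\xi-\eta)|\eqsim|\xi-\eta|^{2-\alpha}+|\eta|^{2-\alpha}$ for $\alpha\in(0,1)$, with an additional factor $|\xi\eta|^{\beta}$ when $\alpha=-\beta<0$. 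Redistributing that growth ``between $\xi-\eta$ and $\eta$'' as in the cubic estimates of Section 3 does not help here: in the cubic setting a weight placed on $\eta$ lands on a factor $\hat u(\eta)$, but in the quartic setting there is nothing to receive it, and $\hat u(\xi)$, $\hat u(\sigma)$ already carry their full quota of $k$, respectively $k-1$, derivatives. Relatedly, ``the remaining three factors of $\hat u$ are estimated in $L^2$'' is not a legitimate H\"older split for a quadrilinear convolution; one needs two $L^2$ factors and two $L^\infty$ (i.e.\ $H^1$-embedded) factors.

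The missing idea is that membership in $\mathcal A_1$ allows one to convert \emph{all} powers of $\xi,\eta,\sigma$ arising from the symbol into powers of the difference frequencies $\xi-\eta$ and $\eta-\sigma$, which do sit under copies of $\hat u$ carrying no derivatives: on $\mathcal A_1$ the triangle inequality gives $|\xi|+|\eta|+|\sigma|\lesssim 1+|\xi-\eta|+|\eta-\sigma|$, and for $\alpha<0$ one also needs \eqref{eq:xi eta estimate}. With this, the symbol factor $m(\xi-\eta,\eta)\eta(\xi-\eta)$ is dominated by at most two derivatives' worth of weight distributed over $\hat u(\xi-\eta)$ and $\hat u(\eta-\sigma)$, the native weights $\sigma^{k-1}$ and $\xi^{k}$ remain on $\hat u(\sigma)$ and $\hat u(\xi)$, and an $L^\infty\times L^\infty\times L^2\times L^2$ estimate closes the bound; the same device applied to $m(\eta-\sigma,\sigma)(\eta-\sigma)\sigma$ handles $F_{1,0}$. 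Your three-strip decomposition is compatible with this (it is exactly how the triangle inequality is applied ``to the different cases''), but without the transfer of the $\eta$-growth onto the difference variables the estimate does not close.
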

\begin{proof}
The projection \(\mathcal{A}_1 G_{1,0}\) equals
\[
\int_{\mathcal{A}_1}m(\xi-\eta,\eta) \eta (\xi-\eta) (\I \sigma)^{k-1} (-\I \xi)^k  \diff Q(\hat u),
\]
where
\begin{equation}\label{eq:dQ}
\diff Q(\hat u) := \hat{u}(\xi-\eta)\hat{u}(\eta-\sigma) \hat{u}(\sigma)\overline{\hat{u}(\xi)} \, \diff\xi \, \diff\eta \, \diff\sigma.
\end{equation}
is the quartic (signed) measure appearing in the integral. Note that the variables here are \(\xi\), \(\sigma\), \(\xi -\eta\) and \(\eta - \sigma\) (but not, for example, \(\eta\)). In the set \(\mathcal{A}_1\), one however has
\[
|\xi| + |\eta| + |\sigma| \lesssim 1 +  |\xi - \eta| + |\eta - \sigma|,
\]
and
\begin{equation}\label{eq:xi eta estimate}
|\xi \eta| \lesssim 1 + |\xi - \eta| +  |\xi - \eta||\eta - \sigma| +  |\eta - \sigma|^2,
\end{equation}
as a consequence of the triangle inequality applied repeatedly to the different cases. We can thus estimate the total symbol pointwise in the following way: for \(\alpha \in (0,1)\),
\begin{align*}
\left| m(\xi-\eta,\eta) \eta (\xi-\eta) \right| &\eqsim \left( |\xi - \eta|^{2-\alpha} +  |\eta|^{2-\alpha} \right)\\
&\lesssim \left( 1 + |\xi - \eta|^{2-\alpha} +  |\eta - \sigma|^{2-\alpha} \right),
\end{align*}
and for \(\beta = -\alpha  \in (0,1)\), 
\begin{align*}
\left| m(\xi-\eta,\eta) \eta (\xi-\eta) \right| &\eqsim |\xi \eta|^\beta \left(|\xi - \eta|^{2-2\beta}  +  |\eta|^{2-2\beta}  \right) |\xi - \eta|^\beta\\
&\lesssim \left( 1 + |\xi - \eta|^{\beta} + |\xi - \eta|^{\beta} |\eta - \sigma|^\beta +  |\eta - \sigma|^{2\beta} \right) \\
&\quad \times \left( 1 + |\xi - \eta|^{2-2\beta} + |\eta - \sigma|^{2-2\beta} \right)  |\xi - \eta|^{\beta}\\
&\lesssim 1 + |\xi - \eta|^2 + |\eta - \sigma|^2 + |\xi - \eta|^2 |\eta - \sigma|^2.
\end{align*}
Multiplying with \(|\sigma|^{k-1} |\xi|^k\), we may use one \(H^1\)-estimate on the \(\sigma\)-terms, and one on either the \((\eta-\sigma)\)-terms or the \((\xi-\eta)\)-terms, to obtain that
\[
| \mathcal{A}_1 G_{1,0} | \lesssim \|u\|_{H^2}^2\|u\|_{H^k}^2.
\] 
Similarly, \(\mathcal{A}_1 F_{1,0}\) equals
\begin{equation}\label{eq:A1 F10}
\int_{\mathcal{A}_1}m(\eta-\sigma,\sigma) \I (\eta-\sigma) (\I \sigma)^k 
(-\I \xi)^k  \diff Q(\hat u),
\end{equation}
so the estimate for \(\alpha \in (0,1)\) can be obtained exactly as for \(G_{1,0}\) if we borrow a \(\sigma\) from \(\sigma^k\), yielding
\begin{align*}
\left| m(\eta-\sigma,\sigma) (\eta- \sigma) \sigma \right| &\lesssim \left( 1 + |\xi - \eta|^{2-\alpha} +  |\eta - \sigma|^{2-\alpha} \right).
\end{align*}
When \(\beta = - \alpha \in (0,1)\), the symbol is also the same as for \(G_{1,0}\) (with \((\xi - \eta, \eta)\) replaced by \((\eta - \sigma, \sigma)\)) so we just need to replace \eqref{eq:xi eta estimate} with its symmetric equivalence
\begin{equation*}
|\sigma \eta| \lesssim 1 + |\eta - \sigma| +  |\eta - \sigma||\xi - \eta| +  |\xi - \eta|^2,
\end{equation*}
to obtain that
\begin{align*}
\left| m(\eta-\sigma,\sigma) (\eta- \sigma) \sigma \right| &\lesssim 1 + |\xi - \eta|^2 + |\eta - \sigma|^2 + |\xi - \eta|^2 |\eta - \sigma|^2,
\end{align*}
in the case \(\beta = -\alpha  \in (0,1)\). Multiplying with \(|\sigma|^{k-1} |\xi|^k\) and performing the estimates with one sup-norm on \(\||D|^{k-1} u\|_{H^1_\sigma}\) and one on either the \((\eta-\sigma)\)-factor or the \((\xi-\eta)\)-factor, we obtain the desired bound on \(\mathcal{A}_1 F_{1,0}\).
\end{proof}

As what concerns \(F_{1,0} + G_{1,0}\) we are thus left with estimating the high-frequency part of the integrals. Let \(\mathcal{A}_1^c\) denote the complement of \(\mathcal{A}_1\), that is 
\[
\mathcal{A}_1^c = \{(\xi,\eta,\sigma)\in\R^3 \colon |\xi|,|\eta|,|\sigma| \geq  1\},
\]
and \(\mathcal{A}_1^c F_{1,0}\) and  \(\mathcal{A}_1^c G_{1,0}\) the corresponding restrictions of the integrals \(F_{1,0}\) and \(G_{1,0}\) to the set \(\mathcal{A}_1^c\). We divide \(\mathcal{A}_1^c \subset \R^3\) further using
\begin{equation}\label{eq:A2}
\begin{aligned}
\mathcal{A}_2:=\{(\xi,\eta,\sigma)\in {\mathcal A}_1^c  \colon &{\textstyle \frac{1}{10}} |z_2| < |z_1-z_2|+|z_2-z_3|,\\ &\quad \text{for some choice of } z_j  = \xi,\eta,\sigma\},
\end{aligned}
\end{equation}
and its complement \(\mathcal{A}_2^c\) in \(\mathcal{A}_1^c\). The point of \(\mathcal{A}_2\) is that, by the triangle inequality, any powers of \(\xi\), \(\eta\) and \(\sigma\) can be readily transferred to powers of \(|\xi - \eta| + |\eta - \sigma|\), as we have
\[
|z_1| + |z_2| + |z_3| \lesssim |z_1 - z_2| + |z_2| + |z_2 - z_3| \lesssim  |z_1 - z_2| + |z_2 - z_3|.
\]
As before, we use the sets \(\mathcal{A}_2\) and \(\mathcal{A}_2^c\) also to denote restrictions of integrals to the same sets. Using the possibility to move derivatives to \(\xi - \eta\) and \(\eta - \sigma\) as just described, it is straightforward to verify the following proposition.

\begin{proposition}\label{prop:A_2}
\[
| {\mathcal A}_2 F_{1,0} | + | {\mathcal A}_2 G_{1,0} | \lesssim \|u\|_{H^2}^2\|u\|_{H^k}^2.
\]
\end{proposition}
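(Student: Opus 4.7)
The starting point is the defining property of $\mathcal A_2$: for every $(\xi,\eta,\sigma) \in \mathcal A_2$ we have $|\xi| + |\eta| + |\sigma| \lesssim |\xi - \eta| + |\eta - \sigma|$, so that any positive power $|z|^s$ with $z \in \{\xi, \eta, \sigma\}$ may be replaced, up to a constant, by $|\xi - \eta|^s + |\eta - \sigma|^s$. Combined with the pointwise bounds in Proposition~\ref{proposition:multiplier 1}, this lets me reduce the symbols of $\mathcal A_2 G_{1,0}$ and $\mathcal A_2 F_{1,0}$ to a finite sum of separable monomials $|\xi - \eta|^{a_1}|\eta - \sigma|^{a_2}|\sigma|^{a_3}|\xi|^{a_4}$. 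Each resulting integral is then converted by Parseval to $\int (|D|^{a_1}u)(|D|^{a_2}u)(|D|^{a_3}u)(|D|^{a_4}u) \, \diff x$, to which Hölder in an $L^2 \times L^\infty \times L^\infty \times L^2$ configuration with $H^1 \hookrightarrow L^\infty$ applies.

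Carrying this out for $\mathcal A_2 G_{1,0}$ when $\alpha \in (0,1)$, \eqref{6.8a} gives $|m(\xi-\eta,\eta)\eta(\xi-\eta)| \lesssim |\xi-\eta|^{2-\alpha} + |\eta|^{2-\alpha}$, and on $\mathcal A_2$ the $|\eta|^{2-\alpha}$-piece becomes $|\xi-\eta|^{2-\alpha} + |\eta-\sigma|^{2-\alpha}$. Multiplying by $|\sigma|^{k-1} |\xi|^k$ and assigning the factors $|D|^{2-\alpha} u$ and $|D|^k u$ to $L^2$, and $|D|^{k-1} u$ and $u$ to $L^\infty$ (via $H^1$), the Hölder scheme produces
\[
\|u\|_{H^{2-\alpha}} \, \|u\|_{H^1} \, \|u\|_{H^k}^2 \lesssim \|u\|_{H^2}^2 \|u\|_{H^k}^2,
\]
as required. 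The term $\mathcal A_2 F_{1,0}$ in the same range is handled identically: its symbol in \eqref{eq:A1 F10} is of the same structural type as that of $G_{1,0}$, with the pair $(\xi - \eta, \eta)$ replaced by $(\eta - \sigma, \sigma)$, and the defining condition of $\mathcal A_2$ is symmetric in the two differences.

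The case $\beta = -\alpha \in (0,1)$ follows the same lines, but one must be a little more careful. Here \eqref{6.8b} produces $|m(\xi-\eta,\eta)\eta(\xi-\eta)| \lesssim |\xi|^\beta(|\xi-\eta|^{2-\beta}|\eta|^\beta + |\eta|^{2-\beta}|\xi-\eta|^\beta)$, and after moving $|\xi|^\beta$, $|\eta|^\beta$, and $|\eta|^{2-\beta}$ onto the differences via the $\mathcal A_2$ bound one is left with separable monomials of total order $2k + 1 + \beta$. Since $\beta < 1$ this order still lies within the budget of two $H^2$-factors and two $H^k$-factors in the $L^2 \times L^\infty \times L^\infty \times L^2$ configuration, and the main obstacle---which I expect to be the only mildly tedious part---is simply choosing the Hölder distribution so that neither difference-factor placed in $L^2$ receives more than two derivatives and neither placed in $L^\infty$ receives more than one. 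The symmetric analysis of $\mathcal A_2 F_{1,0}$ in the negative-$\alpha$ range uses the same mechanism and brings in no new ingredients.
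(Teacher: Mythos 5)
Your reduction, and your treatment of the case $\alpha\in(0,1)$, are correct and are exactly what the paper intends here (the paper gives no written proof of this proposition beyond the remark preceding it, so fleshing out the $L^2\times L^2\times L^\infty\times L^\infty$ scheme with $H^1\hookrightarrow L^\infty$ is the right move, and for $\alpha>0$ the distribution $\|u\|_{H^{2-\alpha}}\|u\|_{H^1}\|u\|_{H^k}^2$ closes). The gap is in the case $\beta=-\alpha\in(0,1)$, precisely at the step you defer as ``mildly tedious'': the H\"older distribution you require does not exist. By \eqref{6.8b} the total symbol of $\mathcal A_2G_{1,0}$ is comparable to $|\xi|^{\beta}\bigl(|\xi-\eta|^{2-\beta}|\eta|^{\beta}+|\eta|^{2-\beta}|\xi-\eta|^{\beta}\bigr)|\sigma|^{k-1}|\xi|^{k}$, of total order $2k+1+\beta$. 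Your budget allows at most $k$ derivatives on the $\xi$-factor (in $L^2$), at most $k-1$ on the $\sigma$-factor (in $L^\infty$ via $H^1$), at most $2$ on the difference factor placed in $L^2$ and at most $1$ on the one placed in $L^\infty$, i.e.\ a total of $2k+1$. At the point $(\xi,\eta,\sigma)=(2R,R,R)\in\mathcal A_2$, where $\eta-\sigma=0$ and $\xi-\eta=R$, the symbol is genuinely of size $\eqsim R^{\beta}\cdot R^{2}\cdot R^{k-1}\cdot R^{k}=R^{2k+1+\beta}$ (the bound \eqref{6.8b} is two-sided), whereas every monomial obeying your per-factor caps is either zero there (if it carries a positive power of $\eta-\sigma$) or of size at most $R^{2k+1}$. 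Hence no finite sum of admissible separable monomials dominates the symbol pointwise on $\mathcal A_2$, and the scheme cannot close with two $H^2$- and two $H^k$-factors.

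The repair is cheap but must be stated. On $\mathcal A_2$ one has $|\xi|,|\eta|,|\sigma|\lesssim\Delta:=|\xi-\eta|+|\eta-\sigma|$, so the whole prefactor above is $\lesssim\Delta^{2+\beta}\lesssim|\xi-\eta|^{2+\beta}+|\eta-\sigma|^{2+\beta}$; placing the difference factor that carries $2+\beta\le3$ derivatives in $L^2$, the bare difference factor and the $\sigma$-factor in $L^\infty$ via $H^1$, and the $\xi$-factor in $L^2$ gives $\|u\|_{H^{2+\beta}}\|u\|_{H^1}\|u\|_{H^k}^2\lesssim\|u\|_{H^2}\|u\|_{H^3}\|u\|_{H^k}^2$. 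This weaker bound is all that is actually needed: it is the form that appears in the final energy estimate \eqref{19.5}, and the paper's own Lemma~\ref{lemma:A_1 estimates} exhibits the same feature (its term $|\xi-\eta|^2|\eta-\sigma|^2|\sigma|^{k-1}|\xi|^k$ likewise forces an $H^3$-factor on one slot). So either claim the $\|u\|_{H^2}\|u\|_{H^3}\|u\|_{H^k}^2$ bound for $\alpha<0$, or supply an argument that genuinely avoids the extra $\beta$ derivatives --- which your proposal, as written, does not.
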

Now, the integral left to treat is
\begin{equation}\label{62.2}
\begin{aligned}
&\mathcal{A}_2^c (G_{1,0}+F_{1,0})\\ 
&= -\I \int_{\mathcal{A}_2^c}  \left[ m(\xi-\eta,\eta) (\xi - \eta) \eta -  m(\eta-\sigma,\sigma) (\eta - \sigma) \sigma  \right]  \sigma^{k-1}  \xi^k \diff Q(\hat u).
\end{aligned}
\end{equation}
We note here that in \(\mathcal{A}_2^c\) one has \(|\xi/\eta - 1| + |1 - \sigma/\eta| \leq \frac{1}{10}\), so that there
\[
\sign(\xi)=\sign(\sigma)=\sign(\eta),
\]
which will be of great help. In particular, it allows us to assume that \(\xi, \eta, \sigma\) are all positive, as \(m\), \(u\) and the energy are real: because \(m\) is jointly even in its arguments, one gets after the shift of variables \((\xi, \eta, \sigma) \rightarrow -(\xi, \eta, \sigma)\) that the integrals in \eqref{62.2} equal twice their value when taken only over
\[
\mathcal{A}_{2,+}^c := \left\{ (\xi,\eta, \sigma) \in \mathcal{A}_2^c \colon \xi, \eta, \sigma \geq 1\right\},
\]
in view of that \(\mathcal{A}_2^c\) lies in the exterior of the unit cube. In \(\mathcal{A}_{2,+}^c\) we may furthermore write 
\begin{equation}\label{eq:mu nu}
\xi=(1+\mu)\eta \quad\text{ and }\quad \sigma=(1+\nu)\eta,
\end{equation}
where \(|\mu|,|\nu|\leq {\textstyle \frac{1}{10}}\) are uniformly small quantities. Consequently,
\begin{equation}\label{eq:similar}
\xi\eqsim\sigma\eqsim\eta\eqsim\xi-\eta+\sigma \gtrsim 1.
\end{equation}
We shall use the small variables \(\mu = \frac{\xi - \eta}{\eta}\) and \(\nu = \frac{\sigma - \eta}{\eta}\) to move derivatives from \(\eta \eqsim \xi \eqsim\sigma\) to \(\xi - \eta\) and \(\eta - \sigma\). 

Pointwise estimates in \eqref{62.2} are not sufficient for the desired bounds. We will therefore apply changes of variables to take advantage of the commutator structure of \eqref{62.2}. Although we make us of the difference in this integral, we emphasise that this difference is in fact (implicitly) inherent already in each of the terms \(F_{1,0}\) and \(G_{1,0}\). As the following proposition makes precise, these two terms are namely equal in \(\mathcal{A}_2^c\), modulo a good term, so that \(\mathcal{A}_2^c F_{1,0} \sim \frac{1}{2} \mathcal{A}_2^c (F_{1,0} + G_{1,0})\).

\begin{lemma}\label{lemma:F10 sim G10} 
In \eqref{62.2} one has
\(
\mathcal{A}_2^c F_{1,0} = \mathcal{A}_2^c  G_{1,0} + \bigO(\|u\|_{H^2} \|u\|_{H^3} \|u\|_{H^k}^2).
\)
\end{lemma}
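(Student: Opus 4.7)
The plan is to exploit two symmetries in $\mathcal{A}_{2,+}^c$: its invariance under the permutation $\xi\leftrightarrow\sigma$, and the skew-symmetry \eqref{6.1} of $m$. Together they uncover a hidden commutator already inside the difference $F_{1,0}-G_{1,0}$, whose decisive gain is an extra factor $(\sigma-\xi)$.

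First, by the same sign-reversal argument preceding \eqref{62.2}, I would reduce to $\mathcal{A}_{2,+}^c$, so that $\mathcal{A}_2^c F_{1,0}=2\mathcal{A}_{2,+}^c F_{1,0}$ and likewise for $G_{1,0}$. Since $\mathcal{A}_{2,+}^c$ is symmetric in all three Fourier variables, the swap $\xi\leftrightarrow\sigma$ preserves the domain; for real $u$ a direct computation shows this swap sends $\mathrm{d}Q(\hat u)$ to $\overline{\mathrm{d}Q(\hat u)}$ and takes the symbol $m(\eta-\sigma,\sigma)(\eta-\sigma)\sigma^k\xi^k$ to $m(\eta-\xi,\xi)(\eta-\xi)\xi^k\sigma^k$. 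Applying \eqref{6.1} in the form $m(\eta-\xi,\xi)\xi=-m(\xi-\eta,\eta)\eta$ rewrites this as $m(\xi-\eta,\eta)(\xi-\eta)\eta\,\xi^{k-1}\sigma^k$. This symbol is odd under the global sign change $(\xi,\eta,\sigma)\mapsto -(\xi,\eta,\sigma)$, so its $\mathrm{d}Q$-integral over $\mathcal{A}_{2,+}^c$ is purely imaginary and $\int(\cdot)\overline{\mathrm{d}Q}=-\int(\cdot)\mathrm{d}Q$ there. Collecting the prefactors of $\mathrm{i}$ gives
\[
\mathcal{A}_{2,+}^c F_{1,0}=-\mathrm{i}\int_{\mathcal{A}_{2,+}^c}m(\xi-\eta,\eta)(\xi-\eta)\eta\,\xi^{k-1}\sigma^k\,\mathrm{d}Q(\hat u),
\]
which is precisely the expression for $G_{1,0}$ with $\sigma^{k-1}\xi^k$ replaced by $\xi^{k-1}\sigma^k$. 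Subtracting and using the algebraic identity $\xi^{k-1}\sigma^k-\sigma^{k-1}\xi^k=(\xi\sigma)^{k-1}(\sigma-\xi)$ produces
\[
\mathcal{A}_{2,+}^c(F_{1,0}-G_{1,0})=-\mathrm{i}\int_{\mathcal{A}_{2,+}^c}m(\xi-\eta,\eta)(\xi-\eta)\eta\,(\xi\sigma)^{k-1}(\sigma-\xi)\,\mathrm{d}Q(\hat u).
\]

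The remaining estimate follows by dominating $|\sigma-\xi|\le|\sigma-\eta|+|\xi-\eta|$ and invoking Proposition \ref{proposition:multiplier 1}, which in $\mathcal{A}_{2,+}^c$ (where $\xi\eqsim\eta\eqsim\sigma$) yields the uniform bound $|m(\xi-\eta,\eta)(\xi-\eta)\eta|\lesssim|\eta|^{2-\alpha}\lesssim \xi\sigma$ for all $\alpha\in(-1,1)\setminus\{0\}$, with a refined bound $\xi\eta|\xi-\eta|^{-\alpha}$ available when $-1<\alpha<0$. Switching to the Plancherel-compatible variables $(a,b,c)=(\xi-\eta,\eta-\sigma,\sigma)$, with $\xi=a+b+c$, the integral takes the form $\int F(a)G(b)H(c)K(a+b+c)\,\mathrm{d}a\,\mathrm{d}b\,\mathrm{d}c$; placing the weights $c^k$ onto $H$ and $(a+b+c)^k$ onto $K$ absorbs two $H^k$-norms via Plancherel, while the residual low-frequency factors $|a|$, $|b|$ (or powers thereof created by the $(\sigma-\xi)$-gain and the symbol bound) are handled by the three-function Young's inequality $\|F*G*H\|_{L^2}\le\|F\|_{L^{p_1}}\|G\|_{L^{p_2}}\|H\|_{L^{p_3}}$ with $1/p_1+1/p_2+1/p_3=5/2$, together with $\int|a|^s|\hat u(a)|\,\mathrm{d}a\lesssim\|u\|_{H^{s+1/2+\varepsilon}}$. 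The net result is the desired bound $\|u\|_{H^2}\|u\|_{H^3}\|u\|_{H^k}^2$, the factor of $2$ from the reduction to $\mathcal{A}_{2,+}^c$ being absorbed into the implicit constant.

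The main obstacle is recognizing the hidden algebraic structure: the identity displayed above is what secretly converts $F_{1,0}$ and $G_{1,0}$ into the same commutator-type integral, so that the pointwise bounds of Proposition \ref{proposition:multiplier 1} — which cannot on their own beat the $|\eta|^{2-\alpha}$-growth — become sufficient to close the estimate once multiplied by the gain factor $(\sigma-\xi)$ produced by the combined use of the swap $\xi\leftrightarrow\sigma$, the skew-symmetry \eqref{6.1}, and the parity argument on $\mathcal{A}_{2,+}^c$.
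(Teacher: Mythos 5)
Your overall strategy is the same as the paper's: the transposition $\xi\leftrightarrow\sigma$ combined with the skew-symmetry \eqref{6.1} turns $F_{1,0}$ and $G_{1,0}$ into the same integral up to the factor $\xi^{k-1}\sigma^{k}$ versus $\sigma^{k-1}\xi^{k}$, and the resulting gain of one difference $(\sigma-\xi)\lesssim|\xi-\eta|+|\eta-\sigma|$ closes the estimate via Proposition~\ref{proposition:multiplier 1} and $L^2/L^\infty$ bounds (the paper applies the swap to $G_{1,0}$ rather than $F_{1,0}$, which is immaterial). However, there is a genuine gap in your parity step. After the swap, the measure becomes $\overline{\diff Q(\hat u)}$, and you convert it back to $\diff Q(\hat u)$ by asserting that the $\diff Q$-integral of the odd symbol over $\mathcal{A}_{2,+}^c$ is purely imaginary. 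That conclusion does not follow: the standard argument — $\overline{\int S\,\diff Q}=\int S\,\overline{\diff Q}=-\int_{-A}S\,\diff Q$ for an odd real symbol — requires the domain to be invariant under $(\xi,\eta,\sigma)\mapsto-(\xi,\eta,\sigma)$, and $\mathcal{A}_{2,+}^c$ is not (its image is $\mathcal{A}_{2,-}^c$). Running the computation on $\mathcal{A}_{2,+}^c$ only returns the tautology $z=z$, so the claim is unsupported, and for generic real $u$ the integral over the positive cone has no reason to be purely imaginary. For the same reason the opening reduction $\mathcal{A}_2^cF_{1,0}=2\,\mathcal{A}_{2,+}^cF_{1,0}$ is really $2\,\mathrm{Re}$ of the positive-cone integral.

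The repair is short and is essentially what the paper does: perform the swap and \eqref{6.1} on the full, sign-symmetric set $\mathcal{A}_2^c$, where $\mathcal{A}_2^cF_{1,0}$ and $\mathcal{A}_2^cG_{1,0}$ are genuinely real (the symbols satisfy $T(-\xi,-\eta,-\sigma)=\overline{T(\xi,\eta,\sigma)}$ and $\overline{\diff Q}(\xi,\eta,\sigma)=\diff Q(-\xi,-\eta,-\sigma)$), so the spurious conjugation can be dropped; alternatively, carry $2\,\mathrm{Re}(\cdot)$ through your computation, since $\mathrm{Re}(\bar z)=\mathrm{Re}(z)$ absorbs the conjugate and $|2\,\mathrm{Re}(z)|\le 2|z|$ suffices for the bound. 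One further small inaccuracy: your uniform bound $|m(\xi-\eta,\eta)(\xi-\eta)\eta|\lesssim|\eta|^{2-\alpha}\lesssim\xi\sigma$ fails for $\alpha=-\beta<0$, where $|\eta|^{2+\beta}\not\lesssim\xi\sigma\eqsim\eta^2$; only your ``refined bound'' $\eta^2|\xi-\eta|^{\beta}$ is usable there, and the subsequent estimation must be run with it (as the paper does in \eqref{eq:Lemma4.7**}). With these corrections the argument coincides with the paper's proof.
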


\begin{proof}
First note that \(m(\sigma - \eta, \eta) \eta (\sigma - \eta) = m(\eta - \sigma, \sigma) \sigma (\eta - \sigma)\). We now apply the changes of variables \(\eta \leftrightarrow \xi \leftrightarrow \sigma \leftrightarrow \eta\), after which we use the above equality and the fact that \(\mathcal{A}_2^c G_{1,0}\) and \(u\) are real. That yields
\begin{align*}
\mathcal{A}_2^c G_{1,0} &= -\I \int_{\mathcal{A}_2^c}m(\xi-\eta,\eta) \eta (\xi-\eta) \sigma^{k-1} \xi^k \hat{u}(\xi-\eta)\hat{u}(\eta-\sigma) \hat{u}(\sigma)\overline{\hat{u}(\xi)} \, \diff\xi \, \diff\eta \, \diff\sigma\\
&= -\I \int_{\mathcal{A}_2^c}m(\sigma-\eta,\eta) \eta (\sigma - \eta) \xi^{k-1} \sigma^k \hat{u}(\sigma-\eta)\hat{u}(\eta-\xi) \hat{u}(\xi)\overline{\hat{u}(\sigma)} \, \diff\xi \, \diff\eta \, \diff\sigma\\
&= \I \int_{\mathcal{A}_2^c}m(\eta-\sigma,\sigma) \sigma (\eta - \sigma) \xi^{k-1} \sigma^k \hat{u}(\eta-\sigma)\hat{u}(\xi-\eta) \hat{u}(\sigma)\overline{\hat{u}(\xi)} \, \diff\xi \, \diff\eta \, \diff\sigma.
\end{align*}
By comparing with the symbol for \(F_{1,0}\) in \eqref{eq:A1 F10}, we see that
\[
{\mathcal{A}}_2^c (F_{1,0} - G_{1,0}) = \I \int_{\mathcal{A}_2^c}m(\eta-\sigma,\sigma) (\eta - \sigma) (\xi - \sigma) \xi^{k-1} \sigma^k \diff Q(\hat u).
\]
When \(\alpha\) is positive the total symbol may be estimated by
\begin{align*}
&\left( \frac{|\eta - \sigma|^{1-\alpha}}{|\sigma|} + \frac{|\sigma|^{1-\alpha}}{|\eta - \sigma|} \right) |\eta - \sigma| \left( |\xi - \eta| + |\eta - \sigma| \right) |\xi|^{k-1} |\sigma|^k\\
&\lesssim |\eta - \sigma|^{2-\alpha} |\xi -\eta| |\xi|^{k-1} |\sigma|^{k-1} + |\eta - \sigma|^{3-\alpha} |\xi|^{k-1} |\sigma|^{k-1}\\ 
&\quad+  |\xi -\eta| |\xi|^{k} |\sigma|^{k-\alpha} + |\eta - \sigma| |\xi|^{k} |\sigma|^{k-\alpha},
\end{align*}
where we have used the equivalence \(\sigma \eqsim \xi\) in \({\mathcal{A}}_2^c\). Direct \((L^2)^2 \times (H^1)^2\)-estimates then yields a \(H^2 \times H^3 \times (H^k)^2\)-bound on \(u\) in this case. When \(\alpha = - \beta\) is negative, we again use the equivalence \(\sigma \eqsim \xi \eqsim \eta\) to obtain
\begin{equation}\label{eq:Lemma4.7**}
\begin{aligned}
& |\eta|^{\beta} \left( \frac{|\eta - \sigma|^{1-\beta}}{|\sigma|^{1-\beta}} + \frac{|\sigma|^{1-\beta}}{|\eta - \sigma|^{1-\beta}} \right) |\eta - \sigma| \left( |\xi - \eta| + |\eta - \sigma| \right) |\xi|^{k-1} |\sigma|^k\\
&\lesssim |\eta - \sigma|^{2-\beta} |\xi -\eta| |\xi|^{k-1+\beta} |\sigma|^{k-1+\beta} +|\eta - \sigma|^{3-\beta}  |\xi|^{k-1+\beta} |\sigma|^{k-1+\beta} \\ 
&\quad+ |\eta - \sigma|^\beta  |\xi -\eta| |\xi|^{k} |\sigma|^{k} + |\eta - \sigma|^{1+\beta} |\xi|^{k} |\sigma|^{k}.
\end{aligned}
\end{equation}
By further estimating the second term in \eqref{eq:Lemma4.7**} by
\begin{align*}
&|\eta - \sigma|^{3-\beta} |\xi|^{k-1+\beta} |\sigma|^{k-1+\beta}\\
&\lesssim |\eta - \sigma|^{2}(|\eta|^{1-\beta}+|\sigma|^{1-\beta}) |\xi|^{k-1+\beta} |\sigma|^{k-1+\beta}\\
&\lesssim |\eta - \sigma|^{2}|\xi|^{k} |\sigma|^{k-1+\beta},
\end{align*}
we may obtain a \(H^2 \times H^3 \times (H^k)^2\)-bound by applying the \(H^1\)-estimates to the factors associated with \((\xi - \eta)\) and \((\eta - \sigma)\).
\end{proof}

Now, we separate two comparable parts of \(\mathcal{A}_2^c F_{1,0}\) and \(\mathcal{A}_2^c G_{1,0}\). By expressing \(\mathcal{A}_2^c F_{1,0}\) as
\begin{align*}
&\I \int_{\mathcal{A}_2^c} m(\eta-\sigma,\sigma) (\eta - \sigma) \sigma^k \xi^k \, \diff Q(\hat u)\\ 
&= \I \int_{\mathcal{A}_2^c}\frac{m(\eta-\sigma,\sigma)}{\eta} (\eta - \xi) (\eta - \sigma) \sigma^k \xi^k \, \diff Q(\hat u)\\ 
&\quad+ \I \int_{\mathcal{A}_2^c} \frac{m(\eta-\sigma,\sigma)}{\eta}  (\eta - \sigma) \sigma^k \xi^{k+1} \, \diff Q(\hat u)\\
&=: \mathcal{A}_2^c F_{1,0}^{(\eta - \xi)} + \mathcal{A}_2^c \tilde F_{1,0},
\end{align*}
and \(\mathcal{A}_2^c G_{1,0}\) as
\begin{align*}
&-\I \int_{\mathcal{A}_2^c} m(\xi-\eta,\eta) \eta (\xi-\eta) \sigma^{k-1} \xi^k  \diff Q(\hat u)\\
&= -\I \int_{\mathcal{A}_2^c}  m(\xi-\eta,\eta) (\eta - \sigma) (\xi-\eta) \sigma^{k-1} \xi^k  \diff Q(\hat u)\\
&\quad -\I \int_{\mathcal{A}_2^c}  \frac{m(\xi-\eta,\eta)}{\xi} (\xi-\eta) \sigma^{k} \xi^{k+1}  \diff Q(\hat u)\\
&=: \mathcal{A}_2^c G_{1,0}^{(\eta - \sigma)} + \mathcal{A}_2^c \tilde G_{1,0},
\end{align*}
we may namely neglect the terms \(\mathcal{A}_2^c F_{1,0}^{(\eta - \xi)}\) and \(\mathcal{A}_2^c G_{1,0}^{(\eta - \sigma)}\) in relation to the others as these both contain a second-order difference product \((\eta - \xi) (\eta - \sigma)\). Using the same type of symbol estimates as in the proof of  Lemma~\ref{lemma:F10 sim G10} one then easily obtains \(\|u\|_{H^2}^2 \|u\|_{H^k}^2\)-bounds for these good terms.   We summarise the situation in the following proposition.    

\begin{proposition}\label{prop:tilde FG}
We have \(\mathcal{A}_2^c (G_{1,0} + F_{1,0}) = \mathcal{A}_2^c (\tilde G_{1,0} + \tilde F_{1,0}) + \bigO(\|u\|_{H^2}^2 \|u\|_{H^k}^2)\), where
\begin{equation}\label{eq:tilde G + F}
\begin{aligned}
&\mathcal{A}_2^c (\tilde G_{1,0} + \tilde F_{1,0})\\ 
&= -\I \int_{\mathcal{A}_2^c} \left[ \frac{m(\xi-\eta,\eta)}{\xi} (\xi-\eta)  - \frac{m(\eta-\sigma,\sigma)}{\eta}  (\eta - \sigma)  \right]  \sigma^{k} \xi^{k+1}  \diff Q(\hat u).
\end{aligned}
\end{equation}
\end{proposition}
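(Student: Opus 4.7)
The proof has two parts: an algebraic decomposition of the two integrals, and a quartic Hölder-type estimate on the resulting difference-product remainders.

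For the decomposition, starting from
\[
\mathcal{A}_2^c F_{1,0} = \I \int_{\mathcal{A}_2^c} m(\eta-\sigma,\sigma)(\eta-\sigma)\sigma^k\xi^k \, \diff Q(\hat u),
\]
I would insert the identity $\eta/\eta = 1$ and split $\eta = \xi + (\eta-\xi)$ to obtain $\mathcal{A}_2^c F_{1,0} = \mathcal{A}_2^c \tilde F_{1,0} + \mathcal{A}_2^c F_{1,0}^{(\eta-\xi)}$ by direct algebra (the factor $\xi + (\eta-\xi) = \eta$ reassembles the original symbol). The analogous manipulation with $\xi/\xi = 1$ and $\eta = \sigma + (\eta-\sigma)$ in $\mathcal{A}_2^c G_{1,0}$ gives the companion decomposition for $G_{1,0}$, and summing the two tilde contributions produces precisely \eqref{eq:tilde G + F}.

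It remains to show $|\mathcal{A}_2^c F_{1,0}^{(\eta-\xi)}| + |\mathcal{A}_2^c G_{1,0}^{(\eta-\sigma)}| \lesssim \|u\|_{H^2}^2 \|u\|_{H^k}^2$. The crucial structural feature is the second-order difference product $(\eta-\xi)(\eta-\sigma)$ carried by both remainders: since on $\mathcal{A}_2^c$ one has $\xi \eqsim \eta \eqsim \sigma \gtrsim 1$, the three Fourier variables are interchangeable in upper bounds and the prefactor $|\eta|^{-1}$ is harmless. The plan is then to place $\hat u(\xi)$ and $\hat u(\sigma)$ in the two $L^2$ slots of a standard $L^2 \times L^2 \times L^\infty \times L^\infty$ estimate, each contributing $\|u\|_{H^k}$, and to associate the two unit-order difference weights with $\hat u(\xi-\eta)$ and $\hat u(\eta-\sigma)$ in the $L^\infty$ slots, each controlled via the Sobolev embedding $H^1(\R) \hookrightarrow L^\infty(\R)$ by $\|u\|_{H^2}$. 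The product structure $(\eta-\xi)(\eta-\sigma)$ is essential here: each $L^\infty$ slot absorbs at most one order of difference (had we instead faced a squared term like $(\eta-\sigma)^2$, one slot would jump to $\|u\|_{H^3}$, exactly as in Lemma~\ref{lemma:F10 sim G10}).

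The main care is needed in the $\beta = -\alpha \in (0,1)$ regime, where \eqref{6.8b} carries an extra prefactor $|\eta|^\beta \eqsim |\sigma|^\beta$ that threatens to push the exponent entering the $L^2$ slot above $k$. The remedy is the pointwise bound $|\eta-\sigma|^{1-\beta} \lesssim |\sigma|^{1-\beta}$, valid on $\mathcal{A}_2^c$ because the difference is dominated there by $|\sigma|/10$. This allows me to rewrite each fractional power of a difference as an integer-order factor in the difference times a power of $|\sigma|$, balancing the exponents so that the $|\sigma|$-weight entering the $L^2$ slot stays at most $k$, while any residual fractional weight $|\eta-\sigma|^\beta$ drops into the $L^\infty$ slot bounded by $\|u\|_{H^{1+\beta}} \leq \|u\|_{H^2}$. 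The corresponding treatment for $\alpha \in (0,1)$ using \eqref{6.8a} is strictly simpler, as the symbol carries no extra positive power of $|\eta|$. Altogether both remainder integrals admit the claimed $\|u\|_{H^2}^2 \|u\|_{H^k}^2$ bound, completing the proof.
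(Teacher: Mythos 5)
Your proposal is correct and coincides with the paper's own argument: the paper performs exactly the same algebraic split (inserting \(\eta/\eta\) with \(\eta=(\eta-\xi)+\xi\) in \(F_{1,0}\), and \(\eta=(\eta-\sigma)+\sigma\) followed by \(\xi/\xi\) in \(G_{1,0}\)), and then dismisses the remainders \(\mathcal{A}_2^c F_{1,0}^{(\eta-\xi)}\), \(\mathcal{A}_2^c G_{1,0}^{(\eta-\sigma)}\) by appealing to the second-order difference product and the symbol estimates of Lemma~\ref{lemma:F10 sim G10}. Your \(L^2\times L^2\times L^\infty\times L^\infty\) bookkeeping, including the use of \(|\eta-\sigma|^{1-\beta}\lesssim|\sigma|^{1-\beta}\) on \(\mathcal{A}_2^c\) to keep the \(L^2\) weights at order \(k\), just makes explicit what the paper leaves implicit.
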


Recall from the discussion after Proposition~\ref{prop:A_2} that, by symmetry, it is enough to consider positive values of \(\xi\eqsim\sigma\eqsim\eta\eqsim\xi-\eta+\sigma \gtrsim 1\) in \(\mathcal{A}_{2}^c\); that set is called \(\mathcal{A}_{2,+}^c\). Although it is invariant under changes of variables between the variables \(\xi\), \(\eta\) and \(\sigma\), it is not under the transformation
\[
 \eta\mapsto \xi-\eta+\sigma,
 \]
 that we shall now use. This change of variables substitutes \(\xi - \eta\) for \(\eta - \sigma\) and leaves \(\diff Q(\hat u)\) invariant. It furthermore maps \(\eta\) into a variable of comparable size in \(\mathcal{A}_{2,+}^c\). It however maps \(\mathcal{A}_{2,+}^c\) to \(\mathcal{B} =  \mathcal{B}_1 \cap {\mathcal B}_2\), where
\begin{equation*}\label{eq:B1}
\begin{aligned}
\mathcal{B}_1:=\{(\xi,\eta,\sigma)\in\R^3 \colon \xi,\xi-\eta+\sigma,\sigma \geq 1\},
\end{aligned}
\end{equation*}
and
 \begin{equation*}\label{eq:B2}
\begin{aligned}
\mathcal{B}_2 :=\big\{(\xi,\eta,\sigma)\in\R^3 \colon &|\sigma-\eta|+|\xi-\sigma|\leq {\textstyle\frac{1}{10}} \xi,\\ 
&|\xi-\eta|+|\sigma-\xi|\leq {\textstyle\frac{1}{10}} \sigma,\\
&|\eta-\sigma|+|\xi-\eta|\leq {\textstyle\frac{1}{10}} (\xi-\eta + \sigma) \big\}.
\end{aligned}
\end{equation*}
Performing the above change of variables on the second term in \eqref{eq:tilde G + F}, we obtain that we are left with estimating
\begin{equation}\label{eq:J-K II}
\begin{aligned}
&\I  \int_{\mathcal{A}_{2,+}^c \cap \mathcal{B}} \left[\frac{m(\xi-\eta,\eta)}{\xi}-\frac{m(\xi-\eta,\sigma)}{\xi-\eta+\sigma}\right]  (\xi - \eta) \sigma^k \xi^{k+1} \, \diff Q(\hat u)\\
&\quad + \I  \int_{\mathcal{A}_{2,+}^c \setminus \mathcal{B}} \frac{m(\xi-\eta,\eta)}{\xi}  (\xi - \eta) \sigma^k \xi^{k+1} \, \diff Q(\hat u)\\
&\quad -\I  \int_{\mathcal{B} \setminus \mathcal{A}_{2,+}^c} \frac{m(\xi-\eta,\sigma)}{\xi-\eta+\sigma}  (\xi - \eta) \sigma^k \xi^{k+1} \diff Q(\hat u).
\end{aligned} 
\end{equation}

The two latter terms may be easily done away with using the following lemma.
\begin{lemma}\label{lemma:sets}
The sets \(\mathcal{A}_{2,+}^c \setminus \mathcal{B}\) and \(\mathcal{B} \setminus \mathcal{A}_{2,+}^c\) are both contained in
\[
\left\{(\xi, \eta, \sigma) \in \R^3 \colon \textstyle{\frac{1}{2}} \leq \xi, \eta, \sigma \leq \textstyle{\frac{3}{2}} + 30(|\xi - \eta| + |\eta - \sigma|) \right\}.
\]
\end{lemma}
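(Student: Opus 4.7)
Write \(a = |\xi - \eta|\), \(b = |\eta - \sigma|\), and \(\tau = \xi - \eta + \sigma\), so that the triangle inequality forces \(|\tau - \eta| = |\xi - \sigma| \leq a + b\). Unpacking the definitions, \(\mathcal{A}_{2,+}^c\) consists of points satisfying \(\xi, \eta, \sigma \geq 1\) together with the cone inequalities \(a + b \leq \eta/10\), \(a + |\xi - \sigma| \leq \xi/10\), and \(b + |\xi - \sigma| \leq \sigma/10\), while \(\mathcal{B}\) is cut out by the analogous system with \(\eta\) replaced by \(\tau\) in the first two inequalities and with \(a\) and \(b\) swapped in the last two. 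Both systems impose \(\xi, \sigma \geq 1\), so the lower bound \(\xi, \sigma \geq 1/2\) is automatic.

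For \(\eta \geq 1/2\), in \(\mathcal{A}_{2,+}^c\) one directly has \(\eta \geq 1\); in \(\mathcal{B}\) I would chain \(\eta \geq \tau - (a+b) \geq 1 - (a+b)\) with the cone \(a+b \leq \tau/10 \leq (\eta + (a+b))/10\) to first extract \(a + b \leq 1/9\) and then conclude \(\eta \geq 8/9 > 1/2\).

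For the upper bound the plan is to case-split on which defining inequality of the \emph{other} set fails. \emph{Size failure} (\(\tau < 1\) or \(\eta < 1\)): \(|\eta - \tau| \leq a + b\) traps the failing variable in \((0, 1 + (a+b))\) and the surviving one in \([1, 1 + (a+b))\), so \(\xi + \sigma = \eta + \tau < 2 + (a+b)\) together with \(\xi, \sigma \geq 1\) pins all three coordinates to \([1, 1 + (a+b))\). \emph{Middle-cone failure} (\(a + b > \tau/10\) or \(a + b > \eta/10\)): the failure itself gives \(\tau < 10(a+b)\) (resp.\ \(\eta < 10(a+b)\)), and applying \(|\eta - \tau| \leq a + b\) followed by the triangle inequality for \((\xi, \eta)\) and \((\sigma, \eta)\) propagates the bound to \(\xi, \eta, \sigma \leq 12(a+b)\). \emph{Lateral-cone failure}, say \(b + |\xi - \sigma| > \xi/10\): using \(|\xi - \sigma| \leq a + b\), the left-hand side is bounded by \(a + 2b \leq 2(a+b)\), forcing \(\xi < 20(a+b)\); then \(\sigma \leq \xi + (a+b)\) and \(\eta \leq \sigma + b\) give \(\sigma, \eta \leq 22(a+b)\). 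The three sibling lateral subcases are dealt with identically after permuting the roles of \((\xi, \sigma)\) and \((a, b)\).

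The argument is a structurally routine case analysis rather than a deep calculation; no single step is hard, but one must keep careful track of which partner inequality is available in each subcase—most notably in order to close the \(\eta \geq 1/2\) lower bound in \(\mathcal{B}\) and to obtain \(\xi < 20(a+b)\) in the lateral cases via the triangle inequality. Collecting the worst upper bound across all cases yields \(\xi, \eta, \sigma \leq 22(a+b) \leq 3/2 + 30(a+b)\), which together with the lower bound establishes the claimed inclusion.
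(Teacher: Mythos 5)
Your proposal is correct and follows essentially the same route as the paper: a case analysis on which defining inequality of the other set fails, closed by triangle inequalities among \(\xi,\eta,\sigma\) and \(\xi-\eta+\sigma\) (the paper phrases the cone cases via the relative closeness \(|z_i/z_j-1|\le\frac{1}{10}\) rather than your explicit constants \(12(a+b)\) and \(22(a+b)\), but the substance is identical). One cosmetic slip: \(|\tau-\eta|\) is not equal to \(|\xi-\sigma|\) in general --- \(\tau-\eta=(\xi-\eta)-(\eta-\sigma)\) while \(\xi-\sigma=(\xi-\eta)+(\eta-\sigma)\) --- but both are bounded by \(a+b\), which is all you ever use, so nothing breaks.
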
 
\begin{proof}
Start by assuming that \((\xi, \eta, \sigma) \in \mathcal{A}_{2,+}^c\) but do not belong to \(\mathcal{B}_1\). Then
\[
\xi - \eta + \sigma < 1 \quad \text{ and }\qquad \xi \geq 1.
\]
But \(|\frac{z_i}{z_j} - 1| \leq \frac{1}{10}\) for any choice of \(z_i, z_j \in \{\xi, \eta, \sigma\}\), so \(|\eta - \sigma| \leq \frac{1}{5} \xi\). Hence \(1 \leq \xi \leq \frac{5}{4}\), and from that similar bounds for \(\eta\) and \(\sigma\) follow by their relative closeness to \(\xi\).

If instead \((\xi, \eta, \sigma) \in \mathcal{A}_{2,+}^c\) but do not belong to \(\mathcal{B}_2\), one first shows by relative closeness that \(\frac{1}{2} \eta \leq \xi - \eta + \sigma \leq \frac{3}{2} \eta\), so that \(\frac{1}{10} (\xi - \eta + \sigma)\) can be replaced, up to a small factor, by \(\eta\) in the definition of \(\mathcal{B}_2\). Since \((\xi, \eta, \sigma)\) now does \emph{not} belong to this set, we are back in the situation of \eqref{eq:A2} but with an adjustment of the factor \(\frac{1}{10}\), that is
\[
 {\textstyle \frac{1}{20}} z_2 \leq |z_1-z_2|+|z_2-z_3|
\]
holds for some choice of \(z_j  \in \{\xi,\eta,\sigma\}\), \(z_i \neq z_j\) for \(i \neq j\). The relative closeness of \(\xi, \sigma, \eta\) then yield the stated inequality.

The proof for the case when \((\xi, \eta, \sigma) \in \mathcal{B}\) but do not belong to \(\mathcal{A}_{2,+}^c\) is carried out in almost exactly the same fashion, using the relative closeness of variables, and that it is only \(\eta\) that differ in the inequalities (with respect to \(\xi - \eta + \sigma\)).
\end{proof}

\begin{corollary}\label{cor:thin sets}
The integrals over \(\mathcal{A}_{2,+}^c \setminus \mathcal{B}\) and \(\mathcal{B} \setminus \mathcal{A}_{2,+}^c\) in \eqref{eq:J-K II} can be estimated in modulus by a factor of \(\|u\|_{H^{2}} \|u\|_{H^{3}} \|u\|_{H^k}^2\).
\end{corollary}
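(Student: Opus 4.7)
The plan is to exploit the tight localisation from Lemma~\ref{lemma:sets} and close each of the two integrals through a pointwise symbol bound, a Plancherel identification, and an \(L^\infty \times L^\infty \times L^2 \times L^2\)-Hölder estimate; the two \(L^\infty\)-factors will be controlled via the one-dimensional Sobolev embedding \(H^{1/2+\varepsilon}(\R) \hookrightarrow L^\infty(\R)\).

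First I would factor \(\sigma^k\) and \(\xi^k\) out of each integrand in \eqref{eq:J-K II} and absorb them into \(\hat u(\sigma)\) and \(\overline{\hat u(\xi)}\); these contribute the two \(\|u\|_{H^k}\)-factors of the desired bound. For the integral over \(\mathcal{A}_{2,+}^c \setminus \mathcal{B}\) the surplus power of \(\xi\) from \(\xi^{k+1}\) cancels the \(1/\xi\) in the symbol, leaving simply \(m(\xi - \eta, \eta)(\xi - \eta)\). For the integral over \(\mathcal{B} \setminus \mathcal{A}_{2,+}^c\), I would use that the three defining inequalities of \(\mathcal{B}_2\) force
\begin{equation*}
\xi \sim \eta \sim \sigma \sim \xi - \eta + \sigma
\end{equation*}
on \(\mathcal{B}\), so that the denominator \(\xi - \eta + \sigma\) can be replaced by \(\xi\), reducing the residual symbol to \(m(\xi - \eta, \sigma)(\xi - \eta)\) up to a bounded multiplicative constant.

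In both cases Lemma~\ref{lemma:sets} guarantees that all frequencies are bounded below by \(\frac{1}{2}\) while simultaneously satisfying \(\xi, \eta, \sigma \lesssim 1 + |\xi-\eta| + |\eta-\sigma|\). Applying Proposition~\ref{proposition:multiplier 1} and using that every positive power of \(\xi, \eta, \sigma\) is dominated by \(1 + |\xi-\eta|^{(\cdot)} + |\eta-\sigma|^{(\cdot)}\), the residual symbols are majorised, in both cases \(\alpha \in (0,1)\) and \(-\alpha \in (0,1)\), by a finite sum of monomials \(|\xi-\eta|^a |\eta-\sigma|^b\) with \(a, b \geq 0\) and \(a + b \leq 2\). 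Identifying each such monomial piece via Plancherel as a physical-space product and using Hölder then yields
\begin{equation*}
\||D|^a u\|_{L^\infty} \||D|^b u\|_{L^\infty} \|u\|_{H^k}^2 \lesssim \|u\|_{H^{a+1}} \|u\|_{H^{b+1}} \|u\|_{H^k}^2,
\end{equation*}
and since \(a, b \leq 2\) and \(a + b \leq 2\), monotonicity of Sobolev norms distributes the indices to yield the required bound \(\|u\|_{H^2} \|u\|_{H^3} \|u\|_{H^k}^2\).

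The main (and essentially only) obstacle will be the bookkeeping in the case \(-\alpha \in (0,1)\), where the extra factor \(|\xi|^\beta\) from \eqref{6.8b} generates a longer list of monomials. However, the pointwise bound \(|\xi|^\beta \lesssim 1 + |\xi - \eta|^\beta + |\eta - \sigma|^\beta\) valid on both thin sets by Lemma~\ref{lemma:sets}, combined with a term-by-term expansion analogous to the one preceding \eqref{eq:Lemma4.7**}, still forces every resulting monomial to carry total difference-degree at most \(2\); so the argument closes uniformly in \(\alpha\).
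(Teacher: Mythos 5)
Your argument is correct and takes essentially the same route as the paper's (one-line) proof: both rest on Lemma~\ref{lemma:sets} to dominate every power of \(\xi,\eta,\sigma\) on the two thin sets by the corresponding power of \(1+|\xi-\eta|+|\eta-\sigma|\), reduce the symbols via Proposition~\ref{proposition:multiplier 1} to monomials of total difference-degree at most two, and close with \(L^2\times L^2\times L^\infty\)-type estimates. One small caveat: since the integrals are restricted to \(\mathcal{A}_{2,+}^c\setminus\mathcal{B}\) and \(\mathcal{B}\setminus\mathcal{A}_{2,+}^c\), the literal Plancherel-plus-H\"older identification is not available (a sharp cutoff to such a set is not an \(L^\infty\)-bounded multiplier); instead one takes absolute values in the integrand and applies Young's inequality on the Fourier side, replacing \(\||D|^a u\|_{L^\infty}\) by \(\| |\cdot|^a \hat u \|_{L^1}\lesssim\|u\|_{H^{a+1}}\), which gives the identical final bound.
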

\begin{proof}
This is an almost immediate consequence of Lemma~\ref{lemma:sets} and the estimates \eqref{6.8a} and \eqref{6.8b} for \(m\) as any powers of \(\xi\) and \(\sigma\) can be estimated by the same powers of \(1 + |\xi - \eta| + |\eta - \sigma|\).
\end{proof}

\noindent \textbf{The main commutator.} We are now at our final step of Section~\ref{subsec:F10 + G10}, where we estimate the first integral in \eqref{eq:J-K II}. This is a commutator that will improve our estimates by two orders of cancellations, allowing us to move derivatives in an advantageous way. 
\begin{proposition}\label{prop:G1 + F1 full}
\[
|F_{1,0} + G_{1,0} | \lesssim \|u\|_{H^2} \|u\|_{H^3} \|u\|_{H^k}^2.
\]
\end{proposition}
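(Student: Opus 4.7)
The plan is to close the estimate for $F_{1,0} + G_{1,0}$ by a genuine commutator bound on $\mathcal{A}_{2,+}^c \cap \mathcal{B}$. Lemma~\ref{lemma:A_1 estimates} and Proposition~\ref{prop:A_2} already handle the contributions from $\mathcal{A}_1$ and $\mathcal{A}_2$; combined with Proposition~\ref{prop:tilde FG}, matters reduce to controlling the principal integral in \eqref{eq:J-K II}. By Lemma~\ref{lemma:sets} and Corollary~\ref{cor:thin sets}, the two peripheral pieces over $\mathcal{A}_{2,+}^c \setminus \mathcal{B}$ and $\mathcal{B} \setminus \mathcal{A}_{2,+}^c$ are already within the target, so it remains to bound
\begin{equation*}
\mathcal{J} := \I\int_{\mathcal{A}_{2,+}^c \cap \mathcal{B}} \Delta(\xi,\eta,\sigma)\,(\xi-\eta)\,\sigma^k\,\xi^{k+1}\,\diff Q(\hat u), \qquad \Delta := \tfrac{m(\xi-\eta,\eta)}{\xi}-\tfrac{m(\xi-\eta,\sigma)}{\xi-\eta+\sigma}.
\end{equation*}

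In $\mathcal{A}_{2,+}^c \cap \mathcal{B}$ all of $\xi,\eta,\sigma,\xi-\eta+\sigma$ are positive and comparable to $\eta$, so the explicit formula \eqref{6} lets me write $\tfrac{m(\xi-\eta,\eta)}{\xi} = \tfrac{1}{2h(\eta)}$ and $\tfrac{m(\xi-\eta,\sigma)}{\xi-\eta+\sigma} = \tfrac{1}{2h(\sigma)}$ with
\begin{equation*}
h(s) := |\xi-\eta|^\alpha(\xi-\eta) + s^{1+\alpha} - (\xi-\eta+s)^{1+\alpha},
\end{equation*}
so that $\Delta = \tfrac{h(\sigma)-h(\eta)}{2\,h(\eta)h(\sigma)}$. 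Two successive applications of the mean value theorem---first to $h$ on $[\eta,\sigma]$ and then to $t \mapsto t^\alpha$ on $[s^{*},\xi-\eta+s^{*}]$---produce
\begin{equation*}
h(\sigma)-h(\eta) = -\alpha(1+\alpha)(s^{**})^{\alpha-1}(\xi-\eta)(\sigma-\eta), \qquad s^{**} \eqsim \eta,
\end{equation*}
which is precisely the two-fold cancellation alluded to in the outline. Combining with the lower bounds $|h(s)| \eqsim |\xi-\eta|\,\eta^\alpha$ for $\alpha \in (0,1)$ and $|h(s)| \eqsim |\xi-\eta|^{1-\beta}$ for $\beta := -\alpha \in (0,1)$---both consequences of Proposition~\ref{proposition:multiplier 1} together with the tightness $|\xi-\eta|/\eta \leq 1/10$ (which makes the second summand in \eqref{6.8a}--\eqref{6.8b} dominant)---and redistributing one power of $\eta^{1+\alpha}$ via $\eta\eqsim\xi\eqsim\sigma$, I arrive at
\begin{equation*}
|\Delta|\,|\xi-\eta|\,|\sigma|^k\,|\xi|^{k+1} \lesssim
\begin{cases}
|\sigma-\eta|\,|\sigma|^k\,|\xi|^{k-\alpha}, & \alpha \in (0,1),\\[2pt]
|\sigma-\eta|\,|\xi-\eta|^{2\beta}\,|\sigma|^k\,|\xi|^{k-\beta}, & \beta = -\alpha \in (0,1).
\end{cases}
\end{equation*}
In both cases the monomial factor $(\xi-\eta)$ absorbs the $|\xi-\eta|^{-1}$ singularity carried by $\Delta$, leaving only non-negative powers.

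The final step converts the 4-fold Fourier integral to physical space. After taking absolute values and invoking Plancherel, $|\mathcal{J}|$ is majorised by a quadrilinear expression $\int g_1 g_2 g_3 g_4\,\diff x$ with $g_i := \mathcal{F}^{-1}(K_i\,|\hat u|)$, where the quadruple $(K_1,K_2,K_3,K_4)$ equals $(1,\,|\cdot|,\,|\cdot|^k,\,|\cdot|^{k-\alpha})$ for $\alpha \in (0,1)$ and $(|\cdot|^{2\beta},\,|\cdot|,\,|\cdot|^k,\,|\cdot|^{k-\beta})$ for $\beta = -\alpha \in (0,1)$. H\"older in $L^\infty \times L^\infty \times L^2 \times L^2$ together with the 1D embedding $H^1 \hookrightarrow L^\infty$ (yielding $\|g_i\|_{L^\infty} \lesssim \|K_i(D)u\|_{H^1}$) then gives for $\alpha \in (0,1)$
\begin{equation*}
|\mathcal{J}| \lesssim \|u\|_{H^1}\,\|u\|_{H^2}\,\|u\|_{H^k}\,\|u\|_{H^{k-\alpha}} \lesssim \|u\|_{H^2}\,\|u\|_{H^3}\,\|u\|_{H^k}^2,
\end{equation*}
and for $\beta = -\alpha \in (0,1)$
\begin{equation*}
|\mathcal{J}| \lesssim \|u\|_{H^{2\beta+1}}\,\|u\|_{H^2}\,\|u\|_{H^k}\,\|u\|_{H^{k-\beta}} \lesssim \|u\|_{H^2}\,\|u\|_{H^3}\,\|u\|_{H^k}^2,
\end{equation*}
using $2\beta+1 \leq 3$ and $k-\beta \leq k$. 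The hardest part is book-keeping the double MVT cancellation against the qualitatively different singularities of $m$ for $\alpha > 0$ versus $\alpha < 0$---which is precisely why the two cases must be handled in parallel, and why an extra non-negative power of $|\xi-\eta|$ survives in the negative-$\alpha$ pointwise bound and must be absorbed into the low-frequency factor. Once these symbol bounds are secured, the derivative count closes comfortably within $\|u\|_{H^N}$ for $N \geq 3$.
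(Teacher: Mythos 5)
Your proposal is correct and follows essentially the same route as the paper: the identical reduction through Lemmas~\ref{lemma:A_1 estimates}, \ref{lemma:sets}, Propositions~\ref{prop:A_2} and~\ref{prop:tilde FG}, the same change of variables leading to \eqref{eq:J-K II}, a two-order cancellation in the main commutator, and the same \(L^\infty\times L^\infty\times L^2\times L^2\) quadrilinear estimate at the end. The one genuine (and welcome) streamlining is in the symbol bound: writing \(\Delta=\frac{h(\sigma)-h(\eta)}{2h(\eta)h(\sigma)}\), extracting the cancellation by a double mean value theorem, and using the two-sided bounds of Proposition~\ref{proposition:multiplier 1} to identify the single dominant term \(|h(s)|\eqsim|\xi-\eta|\eta^{\alpha}\) (resp.\ \(|\xi-\eta|^{1-\beta}\)) in the narrow cone collapses the paper's three-term \(R_1,R_2,R_3\) case analysis of \eqref{67.9}--\eqref{67.99} into one clean estimate per sign of \(\alpha\); the cancellation mechanism itself (Taylor expansion of \(Q\) versus your MVT) is the same, and your bound \(|\xi-\eta||\sigma-\eta|\) is in fact marginally sharper than the paper's \(|\xi-\eta|(|\xi-\eta|+|\eta-\sigma|)\).
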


\begin{remark}
It is an immediate corollary of Proposition~\ref{prop:G1 + F1 full} that with \(F_{k,0}\) and \(G_k\) as in~\eqref{eq:def F_{1,0}}--\eqref{eq:G_k} one has 
\[
\left| {\textstyle \frac{1}{4}\frac{\diff}{\diff t}} E^{(k)}(t) - (F_{k,0} + G_k) \right| \lesssim  \|u\|_{H^2} \|u\|_{H^3} \|u\|_{H^k}^2 + \|u\|_{H^k}^4.
\]
\end{remark}

\begin{proof}
Denote	
\begin{align*}
N(\xi,\eta,\sigma)\colon
=\frac{m(\xi-\eta,\eta)}{\xi}
-\frac{m(\xi-\eta,\sigma)}{\xi-\eta+\sigma}.
\end{align*} 
In view of \eqref{eq:similar} one then calculates that
\begin{align*}
&N(\xi,\eta,\sigma)=\frac{1}{2\big[|\xi-\eta|^\alpha(\xi-\eta)+\eta^{1+\alpha}- \xi^{1+\alpha}\big]}\\
&\quad\quad\quad\quad\quad-\frac{1}{2\big[|\xi-\eta|^\alpha(\xi-\eta)+\sigma^{1+\alpha} - (\xi-\eta+\sigma)^{1+\alpha} \big]}\\
&=\frac{\sigma^{1+\alpha} - (\xi-\eta+\sigma)^{1+\alpha} - (\eta^{1+\alpha} - \xi^{1+\alpha})}{2\big[|\xi-\eta|^\alpha(\xi-\eta)+\eta^{1+\alpha} - \xi^{1+\alpha} \big]\big[|\xi-\eta|^\alpha(\xi-\eta)+\sigma^{1+\alpha} - (\xi-\eta+\sigma)^{1+\alpha} \big]}.
\end{align*}
If we let
\[
Q(\xi,\eta,\sigma)=\sigma^{1+\alpha} - (\xi-\eta+\sigma)^{1+\alpha} - (\eta^{1+\alpha} - \xi^{1+\alpha})
\]
be the numerator above, it follows from the definition of \(m\) that
\begin{align*}
|N(\xi,\eta,\sigma)|\lesssim \frac{|m(\xi-\eta,\eta) m(\xi-\eta,\sigma)|}{\xi (\xi-\eta+\sigma)}|Q(\xi,\eta,\sigma)|.
\end{align*}
The fractions in this expression may be estimated using Proposition~\ref{proposition:multiplier 1} and \eqref{eq:similar}. For \( \alpha \in (0,1)\),
\begin{equation}\label{67.9}
\begin{aligned}
&\frac{|m(\xi-\eta,\eta) m(\xi-\eta,\sigma)|}{\xi (\xi-\eta+\sigma)}\\
&\eqsim \frac{1}{\xi (\xi-\eta+\sigma)}\left(\frac{|\xi-\eta|^{1-\alpha}}{\eta}+\frac{\eta^{1-\alpha}}{|\xi-\eta|}\right)\left(\frac{|\xi-\eta|^{1-\alpha}}{\sigma}+\frac{\sigma^{1-\alpha}}{|\xi-\eta|}\right)\\
&\lesssim\underbrace{|\xi-\eta|^{2-2\alpha}\xi^{-4}}_{R_1^\alpha(\xi,\eta)}+\underbrace{|\xi-\eta|^{-\alpha}\xi^{-2-\alpha}}_{R_2^\alpha(\xi,\eta)}+\underbrace{|\xi-\eta|^{-2}\xi^{-2\alpha}}_{R_3^\alpha(\xi,\eta)},
\end{aligned}
\end{equation}
and when \(\beta=-\alpha \in (0,1)\),
\begin{equation}\label{67.99}
\begin{aligned}
&\frac{|m(\xi-\eta,\eta) m(\xi-\eta,\sigma)|}{\xi (\xi-\eta+\sigma)}\\
&\eqsim \frac{1}{\xi^{1-\beta}(\xi-\eta+\sigma)^{1-\beta}}\left(\frac{|\xi-\eta|^{1-\beta}}{\eta^{1-\beta}}+\frac{\eta^{1-\beta}}{|\xi-\eta|^{1-\beta}}\right)\left(\frac{|\xi-\eta|^{1-\beta}}{\sigma^{1-\beta}}+\frac{\sigma^{1-\beta}}{|\xi-\eta|^{1-\beta}}\right)\\
&\lesssim\underbrace{|\xi-\eta|^{2-2\beta}\xi^{-4+4\beta}}_{R_1^\beta(\xi,\eta)}+\underbrace{\xi^{-2+2\beta}}_{R_2^\beta(\xi,\eta)}+\underbrace{|\xi-\eta|^{-2+2\beta}}_{R_3^\beta(\xi,\eta)}.
\end{aligned}
\end{equation}
The symbol \(Q\) may be readily dealt with using the proximity expressed in \eqref{eq:mu nu}, as
\[
Q(\xi,\eta,\sigma)=\eta^{1+\alpha}\Big\{\big[(1+\nu)^{1+\alpha}-(1+\nu+\mu)^{1+\alpha}\big]-\big[1-(1+\mu)^{1+\alpha}\big]\Big\}.
\]
A direct Taylor expansion yields
\begin{align*}
(1+\nu)^{1+\alpha}-(1+\nu+\mu)^{1+\alpha} &=-(1+\alpha)\mu(1+\nu)^{\alpha}+O(\mu^2)\\ 
&=-(1+\alpha)\mu + O(\mu^2 + |\mu\nu|).
\end{align*} 
Since, similarly,
\begin{align*}
1-(1+\mu)^{1+\alpha}=-(1+\alpha)\mu+O(\mu^2),
\end{align*} 
we have
\begin{align*}
\big|(1+\nu)^{1+\alpha}-(1+\nu+\mu)^{1+\alpha} - 1 +(1+\mu)^{1+\alpha}\big|   \lesssim |\mu| (|\mu| + |\nu|),
\end{align*} 
and thus
\begin{equation}\label{67.9999}
\begin{aligned}
|Q(\xi,\eta,\sigma)| &\lesssim |\eta|^{-1+\alpha}|\xi-\eta|(|\xi-\eta|+|\eta-\sigma|).
\end{aligned}
\end{equation}
In effect, we have moved two derivatives from \(\xi \eqsim \sigma\) to \(\xi - \eta\) and \(\eta - \sigma\).
It then readily follows from \eqref{eq:similar}-\eqref{67.9999} and the triangle inequality that
\begin{align*}
R_1^\alpha(\xi,\eta)|Q(\xi,\eta,\sigma)|&\lesssim |\xi-\eta|^{4-2\alpha}\xi^{-5+\alpha}+|\xi-\eta|^{3-2\alpha}|\eta-\sigma| \xi^{-5+\alpha}\\
&\lesssim |\xi-\eta|^{-\alpha} \xi^{-1},
\end{align*}
\begin{align*}
R_2^\alpha(\xi,\eta)|Q(\xi,\eta,\sigma)|&\lesssim |\xi-\eta|^{2-\alpha} \xi^{-3}+|\xi-\eta|^{1-\alpha} |\eta-\sigma|\xi^{-3}\\
&\lesssim |\xi-\eta|^{-\alpha} \xi^{-1},
\end{align*}
\begin{align*}
R_3^\alpha(\xi,\eta)|Q(\xi,\eta,\sigma)|\lesssim \xi^{-1-\alpha}+|\xi-\eta|^{-1}|\eta-\sigma| \xi^{-1-\alpha},
\end{align*}
\begin{align*}
R_1^\beta(\xi,\eta)|Q(\xi,\eta,\sigma)|&\lesssim |\xi-\eta|^{4-2\beta} \xi^{-5+3\beta}+|\xi-\eta|^{3-2\beta}|\eta-\sigma| \xi^{-5+3\beta}\\
&\lesssim |\xi-\eta|^{\beta} \xi^{-1},
\end{align*}
\begin{align*}
R_2^\beta(\xi,\eta)|Q(\xi,\eta,\sigma)|
&\lesssim |\xi-\eta|^2 \xi^{-3+\beta}+|\xi-\eta||\eta-\sigma| \xi^{-3+\beta}\\
&\lesssim |\xi-\eta|^{\beta} \xi^{-1},
\end{align*}
and
\begin{align*}
R_3^\beta(\xi,\eta)|Q(\xi,\eta,\sigma)|&\lesssim |\xi-\eta|^{2\beta} \xi^{-1-\beta}+|\xi-\eta|^{-1+2\beta}|\eta-\sigma| \xi^{-1-\beta}\\
&\lesssim |\xi-\eta|^{\beta} \xi^{-1}+|\xi-\eta|^{-1+\beta}|\eta-\sigma| \xi^{-1}.
\end{align*}
This is all to be multiplied with \( (\xi - \eta) \sigma^k \xi^{k+1} \) in \eqref{eq:J-K II}, whence, finally, it follows that 
\begin{align}\label{eq:J-K estimate}
| \mathcal{A}_2^c (\tilde G_{1,0} + \tilde F_{1,0}) | \lesssim \|u\|_{H^{2}}\|u\|_{H^{3}}\|u\|_{H^k}^2.
\end{align} 
Now, tracing back, we have
\[
F_{1,0} + G_{1,0} = \mathcal{A}_1(F_{1,0} + G_{1,0}) + \mathcal{A}_1^c (F_{1,0} + G_{1,0}),
\]
where the first term is \(O(\|u\|_{H^k}^4)\) in view of Lemma~\ref{lemma:A_1 estimates} (recall that \(k \geq 2\)) ; the second is divided into 
\[
\mathcal{A}_1^c (F_{1,0} + G_{1,0}) =  \mathcal{A}_2 (F_{1,0} + G_{1,0}) + \mathcal{A}_2^c (F_{1,0} + G_{1,0}),
\]
where again Proposition~\ref{prop:A_2} shows that the first is \(O(\|u\|_{H^k}^4)\). Finally, Proposition~\ref{prop:tilde FG} shows that 
\[
\mathcal{A}_2^c (F_{1,0} + G_{1,0}) \eqsim \mathcal{A}_2^c (\tilde F_{1,0} + \tilde G_{1,0}) + O(\|u\|_{H^k}^4).
\]
In view of Corollary~\ref{cor:thin sets} this finalises the proof.
\end{proof}

\subsection{The remaining term: \(G_k\)} In this short subsection we will treat the last term in our energy estimate from Lemma~\ref{lemma:ddt sim I}. Note that the single term \(F_{k,0} = F_{1,0}\) has already been effectively estimated: according to Lemma~\ref{lemma:A_1 estimates} its low frequencies may be estimated by the appropriate term in \((H^2)^2 \times (H^k)^2\), and Proposition~\ref{prop:A_2} shows that the projection of the integral onto the set \(\mathcal{A}_2\) obeys the same bound; now, by Lemma~\ref{lemma:F10 sim G10}, \(F_{k,0} \eqsim F_{k,0} + G_{k,0}\) modulo terms in \(H^2 \times H^3 \times (H^k)^2\) in \(\mathcal{A}_2^c\), so Proposition~\ref{prop:G1 + F1 full} implies that
\[
|F_{k,0}| \lesssim \|u\|_{H^2}\|u\|_{H^3} \|u\|_{H^k}^2.
\]
It remains only to bound \(G_{k}\).

\begin{lemma}\label{lemma:G_k}
The term \(G_k\) from \eqref{eq:G_k} satisfies
\begin{align*}
|G_k|\lesssim\|u\|_{H^2}\|u\|_{H^3}\|u\|_{H^k}^2.
\end{align*}
\end{lemma}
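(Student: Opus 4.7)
The plan is to move the ``extra'' derivative off \(\partial_x^{k+1}u\) by integration by parts, trigger a cancellation on the Fourier side using the product rule for \(P\), and then estimate the resulting trilinear form with Proposition~\ref{proposition:multiplier 1} and \(L^2/\mathcal{F}L^1\)-type H\"older bounds. First, since \(P = \partial_x Q\), integrating by parts on \(\partial_x^{k+1}u = \partial_x (\partial_x^k u)\) and using \(u\partial_x u = \tfrac{1}{2}\partial_x u^2\) gives
\[
G_k = -\tfrac{1}{2}\bigl(P(\partial_x u^2, \partial_x^k u), \partial_x^k u\bigr)_2.
\]
Next, the Fourier identity \(i\xi = i(\xi-\eta) + i\eta\) yields the product rule \(P(\partial_x u^2, \partial_x^k u) = \partial_x P(u^2, \partial_x^k u) - P(u^2, \partial_x^{k+1}u)\), after which one further integration by parts produces
\[
G_k = \tfrac{1}{2}\bigl[\bigl(P(u^2, \partial_x^k u), \partial_x^{k+1}u\bigr)_2 + \bigl(P(u^2, \partial_x^{k+1}u), \partial_x^k u\bigr)_2\bigr] =: \tfrac{1}{2}(A+B).
\]

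The crucial observation is the \emph{Fourier-side cancellation} in \(A + B\). The integrands of \(A\) and \(B\) carry the factors \((i\eta)^k(-i\xi)^{k+1}\) and \((i\eta)^{k+1}(-i\xi)^k\), respectively; using \((i\eta)^k(-i\xi)^k = (\eta\xi)^k\) and \(-i\xi = -i(\xi-\eta) - i\eta\), these combine to \(-i(\xi-\eta)\eta^k\xi^k\), the ``bad'' pieces \(\eta^{k+1}\xi^k\) cancelling exactly. Hence
\[
A+B \ \propto \ \iint m(\xi-\eta, \eta)(\xi-\eta)\eta^k\xi^k \, \hat{u^2}(\xi-\eta)\hat u(\eta)\overline{\hat u(\xi)}\,\diff \eta\,\diff \xi,
\]
where the factor \((\xi-\eta)\) (one derivative transferred to the \(u^2\)-argument) is what keeps the estimate within the budget \(\|u\|_{H^2}\|u\|_{H^3}\|u\|_{H^k}^2\) without any need for \(\|u\|_{H^{k+1}}\).

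For the actual estimate, one invokes Proposition~\ref{proposition:multiplier 1} to bound, for \(\alpha \in (0,1)\),
\[
|m(\xi-\eta,\eta)(\xi-\eta)| \lesssim \frac{|\xi-\eta|^{2-\alpha}}{|\eta|} + |\eta|^{1-\alpha},
\]
with a symmetric bound in the case \(\beta = -\alpha \in (0,1)\). The first piece, combined with \(|\eta|^k|\xi|^k\) and Young's convolution inequality on the Fourier side, is controlled via
\[
\bigl\||D|^{2-\alpha}u^2\bigr\|_{L^2}\bigl\||D|^{k-1}u\bigr\|_{\mathcal{F}L^1}\bigl\|\partial_x^k u\bigr\|_{L^2} \lesssim \|u\|_{H^2}^2\|u\|_{H^k}^2,
\]
using \(\|u^2\|_{H^{2-\alpha}} \lesssim \|u\|_{H^2}\|u\|_{H^1}\) (fractional Leibniz) and the embedding \(H^{1/2+\epsilon} \hookrightarrow \mathcal{F}L^1\). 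For the second piece one uses \(|\eta|^{1-\alpha} \lesssim |\xi|^{1-\alpha} + |\xi-\eta|^{1-\alpha}\) together with \(|\xi|^k \lesssim |\xi-\eta|^k + |\eta|^k\) to redistribute onto the \(u^2\)-factor, where the fractional derivative is harmless, and onto the \(\partial_x^k u\)-factors so that only one of them carries a derivative exceeding \(k\) by a factor absorbed into \(\|u\|_{H^3}\) via Sobolev embedding. The case \(\alpha \in (-1,0)\) is handled analogously using the second half of Proposition~\ref{proposition:multiplier 1}.

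The main obstacle is the second symbol piece \(|\eta|^{k+1-\alpha}|\xi|^k\): a na\"ive H\"older bound would demand \(\|u\|_{H^{k+1-\alpha}}\), which lies outside the stated budget. Circumventing this is where the cancellation in \(A+B\) (equivalently, the factor \((\xi-\eta)\) which effectively shifts one derivative from the top-order \(u\)-factor onto the low-order \(u^2\)-factor) is essential; combined with the triangle-inequality redistribution described above, it keeps all Sobolev indices bounded by \(\max(2,3,k)\), giving the desired estimate.
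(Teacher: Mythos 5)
Your algebra up to \(A+B\) is correct, but it is circular: writing \(m(\xi-\eta,\eta)=\I\xi\, n(\xi-\eta,\eta)\), your final trilinear form
\(\int m(\xi-\eta,\eta)(\xi-\eta)\eta^k\xi^k\,\widehat{u^2}(\xi-\eta)\hat u(\eta)\overline{\hat u(\xi)}\)
is exactly \(\int n(\xi-\eta,\eta)(\xi-\eta)\eta^k\xi^{k+1}\,\widehat{u^2}\hat u\overline{\hat u}\), i.e.\ the expression for \(2G_k\) one reads off directly from \eqref{eq:G_k} after a single integration by parts. The cancellation between \(A\) and \(B\) therefore extracts nothing new, and you are left with precisely the obstacle you name: the piece \(\frac{|\eta|^{1-\alpha}}{|\xi-\eta|}\) of \(m\) contributes \(|\eta|^{k+1-\alpha}|\xi|^k\) to the total symbol. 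In the region \(|\xi-\eta|\ll|\eta|\eqsim|\xi|\) this cannot be redistributed: \(|\eta|^{1-\alpha}\lesssim|\xi|^{1-\alpha}+|\xi-\eta|^{1-\alpha}\) merely moves the excess onto the \(\xi\)-factor (giving \(|\eta|^k|\xi|^{k+1-\alpha}\)), and \(|\xi|^k\lesssim|\xi-\eta|^k+|\eta|^k\) makes matters worse; any H\"older/Young arrangement of this monomial forces \(\|u\|_{H^{k+1-\alpha}}\) on one of the single-\(u\) factors, which is outside the budget. So the proof as written does not close.

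The missing step is a \emph{second} symmetrisation. Starting from \(2G_k=\int n(\xi-\eta,\eta)(\xi-\eta)\eta^k\xi^{k+1}\,\widehat{u^2}\hat u\overline{\hat u}\), split \(\xi^{k+1}=(\xi-\eta)\xi^k+\eta\xi^k\). For the second contribution, the change of variables \(\xi\leftrightarrow\eta\) followed by \((\xi,\eta)\to-(\xi,\eta)\), together with the reality of \(u\) (so \(\overline{\hat u}=\hat u(-\cdot)\)) and the fact that \(n\) is purely imaginary and odd under \((a,b)\to-(a,b)\), shows that this contribution equals \(-2G_k\). Hence
\begin{equation*}
G_k=\tfrac14\int n(\xi-\eta,\eta)(\xi-\eta)^2\eta^k\xi^k\,\widehat{u^2}(\xi-\eta)\hat u(\eta)\overline{\hat u(\xi)}\,\diff\eta\,\diff\xi,
\end{equation*}
with \emph{two} powers of \(\xi-\eta\) landing on the low-order factor \(u^2\). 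Now every term in the bound \(|n|\lesssim\frac{1}{|\xi|^\alpha|\eta|}+\frac{1}{|\xi||\eta|^{\alpha}}+\frac{1}{|\xi||\xi-\eta|^\alpha}+\frac{1}{|\xi|^\alpha|\xi-\eta|}\) (and its analogue for \(\beta=-\alpha\in(0,1)\)) yields monomials with at most \(k\) derivatives on each of \(\hat u(\eta)\), \(\hat u(\xi)\) and at most two on \(\widehat{u^2}\), and the \(L^2\times L^2\times H^1\)-estimates you describe go through. This is the paper's argument; your first integration by parts and your symbol bounds are otherwise in the right spirit.
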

\begin{proof}
We first move some derivatives in \(G_k\) using an integration-by-parts type argument:
\begin{align*}
G_k&= \frac{1}{2} (Q( \partial_x u^2,\partial_x^ku),\partial_x^{k+1}u)_2\\
&= \frac{1}{2} \int n(\xi-\eta,\eta) (\xi - \eta) \eta^k \xi^{k+1} \widehat{u^2}(\xi - \eta) \hat u(\eta) \overline{\hat u(\xi)} \, \diff\eta\, \diff\xi\\
&= \frac{1}{2} \int n(\xi-\eta,\eta) (\xi - \eta)^2 \eta^k \xi^{k} \widehat{u^2}(\xi - \eta) \hat u(\eta) \overline{\hat u(\xi)} \, \diff\eta\, \diff\xi\\
&\quad+ \frac{1}{2} \int n(\xi-\eta,\eta) (\xi - \eta) \eta^{k+1} \xi^{k} \widehat{u^2}(\xi - \eta) \hat u(\eta) \overline{\hat u(\xi)} \, \diff\eta\, \diff\xi\\
&= \frac{1}{2} \int n(\xi-\eta,\eta) (\xi - \eta)^2 \eta^k \xi^{k} \widehat{u^2}(\xi - \eta) \hat u(\eta) \overline{\hat u(\xi)} \, \diff\eta\, \diff\xi\\
&\quad - \frac{1}{2} (Q( \partial_x u^2,\partial_x^ku),\partial_x^{k+1}u)_2,
\end{align*}
where in the last equaility we have taken advantage of the anti-symmetry of \(n\) in \((\xi,\eta)\), and the fact that \(u\) is real whereas \(n\) is imaginary. Thus
\[
G_k = \frac{1}{4} \int n(\xi-\eta,\eta) (\xi - \eta)^2 \eta^k \xi^{k} \widehat{u^2}(\xi - \eta) \hat u(\eta) \overline{\hat u(\xi)} \, \diff\eta\, \diff\xi.
\]
Now, combining \eqref{6.8a} with \eqref{24.5} one has  for \(0<\alpha<1\),
\begin{align*}
|n(\xi-\eta,\eta)|
&\lesssim\frac{1}{|\xi|} \left(\frac{|\xi-\eta|^{1-\alpha}}{|\eta|}+\frac{|\xi-\eta|^{1-\alpha}+|\xi|^{1-\alpha}}{|\xi-\eta|} \right)\\
&\lesssim\frac{1}{|\xi|^\alpha |\eta|} + \frac{1}{|\xi||\eta|^{\alpha}}+\frac{1}{|\xi||\xi-\eta|^\alpha}+\frac{1}{|\xi|^\alpha|\xi-\eta|},
\end{align*}
and, using \eqref{6.8b} for \(0<\beta=-\alpha<1\), that
\begin{align*}
|n(\xi-\eta,\eta)|
&\lesssim\frac{1}{|\xi|^{1-\beta}}\left(\frac{|\xi-\eta|^{1-\beta}}{|\eta|^{1-\beta}}+\frac{|\xi-\eta|^{1-\beta}+|\xi|^{1-\beta}}{|\xi-\eta|^{1-\beta}}\right)\\
&\lesssim\frac{1}{|\eta|^{1-\beta}}+\frac{1}{|\xi|^{1-\beta}}
+\frac{1}{|\xi-\eta|^{1-\beta}}.
\end{align*}
This yields the desired estimate in the same way as in the rest of the paper.
\end{proof}

\section{Proof of the main result}\label{sec:final}
We finally give the proof of our main theorem.
\begin{proof}[Proof of Theorem~\ref{thm:main}] In view of \eqref{19.5}, summing over \(k\) from 1 to \(N\), one has
	\begin{align*}
	\sum_{k=1}^NE^{(k)}(t)\lesssim \sum_{k=1}^NE^{(k)}(0)+\int_0^t\|u(s,\cdot)\|_{H^N}^4\,\diff s,
	\end{align*}
	which  in turn yields 
	\begin{equation}\label{83}
	\sum_{k=1}^NE^{(k)}(t)+\|u\|_{L^2}^2\lesssim \sum_{k=1}^NE^{(k)}(0)+\|u_0\|_{L^2}^2
	+\int_0^t\|u(s,\cdot)\|_{H^N}^4\,\diff  s.
	\end{equation}
Here,  we have used the \(L^2\)-conservation of solutions to \eqref{eq:fKdV}.	
	According to Lemma~\ref{lemma:lower bound  estimates}, we on the other hand have
	\begin{align}\label{84}
	\sum_{k=1}^NE^{(k)}(t)+\|u\|_{L^2}^2
	\eqsim \frac{1}{2}\|u\|_{H^N}^2
	\end{align}
for all \(t \geq 0\) and all \(\|u\|_{H^N} < \varepsilon\) sufficiently small. We conclude from \eqref{83}-\eqref{84} that 
	\begin{align*}
	\|u\|_{H^N}^2\lesssim \|u_0\|_{H^N}^2+\int_0^t\|u(s,\cdot)\|_{H^N}^4\,\diff s,
	\end{align*}
	which finishes the proof by a continuity argument. 
\end{proof}

\section*{Acknowledgement} Part of this research was carried out at the Yau Mathematical Sciences Center, Tsinghua University. The authors are indebted to P. Yu for his kind hospitality and interesting mathematical discussions. The authors would also like to thank J.-C. Saut for valuable comments on an earlier version of this manuscript. The authors are grateful to the two referees, and for their comments and suggestions that helped improve the exposition of the paper.

\end{document}